\definecolor{Pratik}{rgb}{1, 0, 0}
\theoremstyle{definition}
\newtheorem{defn}{Definition}[section]
\begin{document}

\title{Asymptotic Analysis for Overlap in Waveform Relaxation Methods for RC Type Circuits
}

\titlerunning{Asymptotic Analysis for Overlap in WR methods for RC  Circuits}        

\author{Martin J. Gander         \and
        Pratik M. Kumbhar*   \and    Albert E. Ruehli}


\institute{Martin J. Gander \at
              Section de Math\'ematiques, University of Geneva, Geneva CH-1211, Switzerland.\\
               \email{martin.gander@unige.ch}           
           \and
           Pratik M. Kumbhar \at
              Section de Math\'ematiques, University of Geneva, Geneva CH-1211, Switzerland.\\
               \email{pratik.kumbhar@unige.ch}
           \and
           Albert E. Ruehli \at    
                EMC Laboratory, Missouri University of Science And Technology, U.S. \\
                \email{ruehlia@mst.edu}
}

\date{Received: date / Accepted: date}

\maketitle

\begin{abstract}
  Waveform relaxation (WR) methods are based on partitioning large
  circuits into sub-circuits which then are solved separately for
  multiple time steps in so called time windows, and an iteration
    is used to converge to the global circuit solution in each time
    window. Classical WR converges quite slowly, especially when
  long time windows are used. To overcome this issue, optimized WR
  (OWR) was introduced which is based on optimized transmission
  conditions that transfer information between the sub-circuits
  more efficiently than classical WR. We study here for the first
    time the influence of overlapping sub-circuits in both WR and OWR
  applied to RC circuits. We give a circuit interpretation of the
    new transmission conditions in OWR, and derive closed form
  asymptotic expressions for the circuit elements representing the
    optimization parameter in OWR. Our analysis shows that the
    parameter is quite different in the overlapping case, compared to
    the non-overlapping one. We then show numerically that our
    optimized choice performs well, also for cases not covered by
      our analysis. This paper provides a general methodology to
      derive optimized parameters and can be extended to other circuits or
      system of differential equations or space-time PDEs.
      
\keywords{Optimized Waveform Relaxation \and  RC circuits  \and Asymptotic Analysis.}
\end{abstract}

\section{Introduction}\label{intro}
   Electric circuits play a crucial role in our everyday life:
  they can be found in computers, cell phones, chargers, but also in
  cars, watches, and more and more household appliances like stoves,
  fridges and so on. Before buildings these circuits, they are
  usually simulated using computer programs.  Simulating
 circuits means developing a mathematical model to replicate the
 behavior of the real circuit, and then solve this model
  numerically. These simulations help the circuit designers to test
 their ideas, and optimize circuit parameters to achieve the
 desired output. Simulations save a lot of time and cost when
  designing circuits, and one can minimize the risk of unwanted 
 hazards. Further, the number of electronic devices is continuously increasing 
 tremendously which in turn increases the need for new design tools and techniques.
 Circuit solvers like the many different SPICE are no exception. Not only new approaches
 are desired but the tools need to
 have  an ever increasing capacity to solve larger problems.

 The specific class of problems considered in this work is a subset.  We consider a
  sub-class of circuits which are called RC circuits which are composed of resistors (R)
  and capacitors (C). Of course, the circuits will also include current and
  voltage sources. This circuit is also called RC filter since it is
 used to filter certain frequencies and let pass others. The most
 common are high pass filters and low pass filters. We study
  here the low pass RC filter circuit which is shown in
Fig. \ref{RCinf}.
\begin{figure}
  \centering
  \begin{circuitikz}[scale=0.47, every node/.style={scale=0.7}]
   \draw (2,1) to [short] node [ground] {} (2,1);
   \draw (2,1) to [american current source] (2,4);
   \draw (1,2.1) node[above]{\large{$I_{s}$}};
   \draw  (2,4) to (6.5,4);
   \draw (3.5,4) to [R]  (3.5,1);
   \draw (4.2,2.1) node[above]{\large{$R_{s}$}};
   \draw (3.5,1) to [short] node [ground] {} (3.5,1);
   \draw  (4,4) to (6,4);
   \draw[black,fill=black] (5.55,4) circle (.7ex);
   \draw (5.5,4) node[above] {\large{$v_{-\infty}$}};
   \draw (5.5,4) to [C=\large{\(C_{-\infty}\)}]  (5.5,0.5);
   \draw (5.5,1) to [short] node [ground] {} (5.5,1);
   \draw  (5.5,4) to [R=\large{\(R_{-\infty}\)}] (9,4);
   \draw[black,fill=black] (9,4) circle (.7ex);
   \draw (9.5,4) node[above] {\large{$v_{-\infty+1}$}};
   \draw (9,4) to [C=\large{\(C_{-\infty+1}\)}]  (9,1);
   \draw (9,1) to [short] node [ground] {} (9,1);
   \draw  (9,4) to (10,4);
   \draw [dashed] (10,4) to (13,4);
   \draw  (13,4) to (14,4);
   \draw[black,fill=black] (14,4) circle (.7ex);
   \draw (14,4) node[above] {\large{$v_{-1}$}};
   \draw (14,4) to [C=\large{\(C_{-1}\)}]  (14,1);
   \draw (14,1) to [short] node [ground] {} (14,1);
   \draw  (14,4) to [R=\large{\(R_{-1}\)}] (18,4);
   \draw[black,fill=black] (18,4) circle (.7ex);
   \draw (18,4) node[above] {\large{$v_{0}$}};
   \draw (18,4) to [C=\large{\(C_{0}\)}]  (18,1);
   \draw (18,1) to [short] node [ground] {} (18,1);
   \draw  (18,4) to [R=\large{\(R_{0}\)}] (22,4);
   \draw[black,fill=black] (22,4) circle (.7ex);
   \draw (22,4) node[above] {\large{$v_{1}$}};
   \draw (22,4) to [C=\large{\(C_{1}\)}]  (22,1);
   \draw (22,1) to [short] node [ground] {} (22,1);
   \draw (22,4) to [R=\large{\(R_{1}\)}] (25,4);
   \draw [dashed] (25,4) to (27,4);
   \end{circuitikz}
   \caption{RC Circuit of infinite length.}
    \label{RCinf}
\end{figure}
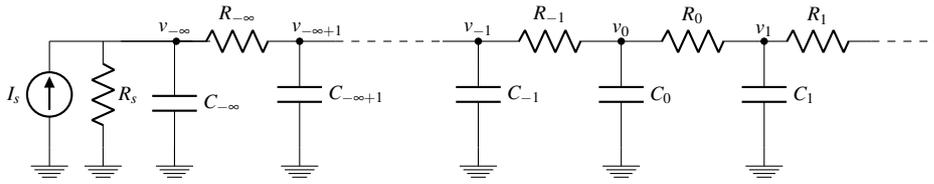
    This circuit allows signals with frequencies lower than a certain cut
 off frequency to pass. Such circuits are often used in acoustics,
 optics and electronics and hence their study is of great
 importance. We formulate an effective mathematical model for its
 simulation using the well known and widely used MNA (Modified
 Nodal Analysis), which resembles the finite element assembly
  procedure in  numerical partial differential equations (PDEs) \cite{FEM_book,Griffiths1986}. This
  leads to a large system of ordinary differential equations
 (ODEs) where the unknowns are voltages at the nodes \cite{MNA}. It is
 computational costly to solve such systems especially if their
 size is in the order of millions. Also, since 2004, the CPU
 frequency has been stagnating around 3 GHz and there is little
 hope that in the near future there will be any improvement \cite{Nataf}. Due
 to the availability of large numbers of processors, we are left with
 the option of using parallel computing in order to increase
 simulation speed. Over the years, various numerical tools
 have been developed to solve PDEs and large systems of equations
 effectively using parallel computing. One such tool is Waveform
  Relaxation.
  
      Waveform Relaxation (WR) was developed in 1982 by Lelarasmee,
    Ruehli and Sangiovanni-Vincentelli \cite{WR_LRS}, for analyzing
    electric circuits. In WR, the circuit or the system of
    differential equations is decomposed into many sub-circuits or
    subsystems which are then solved independently over an entire
    time window using standard time integration techniques. After each
    solve, information is transferred between neighboring
    subsystems using transmission conditions and the process is
    repeated until convergence is reached. The classical WR
    algorithm converges slowly when large time windows are used. To
    overcome this issue, smart transmission conditions were
    introduced, which led to optimized waveform relaxation (OWR)
    since different parameters in the transmission conditions need to be
    optimized. The convergence analysis of the classical WR methods
    for RC circuits was carried out by Ruehli et al \cite{WR_RC_Convergence}.
     Further, studies of WR and OWR algorithms applied to
    small RC circuits can be found in \cite{RC_small_KG, RC_small_GA},
   and for large RC circuits, see \cite{RC_KG,RC_GK24}. There are
   also analyses at the discrete level for OWR applied to RC circuits
   \cite{RC_discrete_SM}. WR and OWR were also studied for RLCG
  transmission lines \cite{RLCG_KG_OWR_transmissionlines1,
  RLCG_KG_WR_transmissionlines,RLCG_KG_OWR_transmissionlines,RLCG_GK25,
  RLCG_GA_OWR_line}. A different approach for the interconnect
  circuits has been studied in \cite{coupling_WR_tarik}, where the
  interconnect circuit is partitioned using Norton interfaces which
  depend upon resistors.
   However, in all these references, both WR and OWR were studied
   for the case of non-overlapping circuits. We show here that
   WR for circuits is related to domain decomposition for PDEs, in the
   case of the RC circuit the heat equation \cite{heat_GS, heat_GL},
   and for RLCG circuits the wave equation \cite{wave_GLF_optimal1d},
   and domain decomposition methods often use overlap to improve
   convergence. In this article, we study the effect of overlap on the
   convergence factor of WR and OWR and present for the first time
   an analysis for the optimized parameter in the transmission
   conditions for RC circuits of infinite length, a result which was
  announced in the short proceedings paper \cite{RC_GK24}.
  
   In WR methods, only the space domain or system of differential equations is split
  into multiple sub-domains or subsystems. One can further combine these WR methods
  with certain other time parallel methods, where the time domain is divided into multiple
  time subdomains. On each time subdomain of the WR method, Parareal
  \cite{time_parareal,time_parareal2} or 
  the pipeline Schwarz waveform relaxation method \cite{time_pipeline}, or  Waveform Relaxation 
  with Adaptive Pipelining (WRAP) \cite{time_WRAP},
  or Revisionist Integral Deferred Correction (RIDC) \cite{RIDC_ong,time_RIDC} can be considered. In the 
  pipeline  Schwarz waveform relaxation method and the WRAP method,
  WR iterates are  computed in a pipeline manner to provide speedup and stability, while RIDC
  produces high order solutions in the same time as the lower order methods \cite{time_RIDC}.
  Further, multigrid WR methods \cite{multigridWR,multigrid_heat} can also be implemented 
  for parabolic differential equations.
  
   WR methods are becoming popular for the simulation of field-circuit coupled 
  problems. Such problems arises if one wants to look closely into the device. In this
  situation, a lumped circuit element is replaced by a PDE model. Another perspective
  for such problems comes from the practical use by the engineer, where different parts of a domain
  are not equally important and hence some of its properties can be neglected
  by replacing a part of it by a simple electric circuit, while in other parts the space-time
  discretization of the PDE is kept. Recently WR method were used for such field-circuit problems at
  CERN to
  simulate the quench protection system \cite{FC_steam,GS_FC}. Moreover, if the RC circuits in
  the field-circuit coupled problem is divided into multiple sub-circuits, then
  the results obtained in our manuscript can be directly implemented in them. Further,
  OWR methods are also applied for the coupled electromagnetic field and power-electronic
  simulations \cite{WR_FC_Cosimulation}.
  
    We explain in Section \ref{Sec:MathModel} how one can obtain the
  mathematical model for the RC circuit of infinite length using
  MNA, and we show that the RC circuit is an approximation to the time
  dependent heat equation. In Section \ref{Sec:WRAlgo} we present and
  study the classical WR algorithm with overlap for RC circuits, and
  identify its convergence problems for low frequencies. In Section
  \ref{Sec:OWR} we introduce the OWR with overlap for RC circuits.  We
  give a circuit interpretation of the new transmission conditions,
  and also show that they can be interpreted as Robin transmission
  conditions for the associated heat equation. We then optimize the
  transmission conditions of OWR in Section \ref{Sec:Optimization}
  using asymptotic techniques. In Section \ref{Sec:ManySubCircuits} we
  show how to generalize our results to the many sub-circuit case.
We perform some numerical tests to support our theoretical results
in Section \ref{Sec:Numerical}, and present our conclusions in
  Section \ref{Sec:Conclusion}.
  
\section{Mathematical Model}\label{Sec:MathModel}

We are interested in determining the voltages at the nodes of an RC
 circuit as in Fig. \ref{RCinf}. To develop a mathematical model
 of this circuit, we use MNA, originally described by Ho,
  Ruehli and Brennan \cite{MNA} in 1975. This technique produces a
 system of ODEs of the form
 $\dot{\textbf{v}}=A\textbf{v}+\textbf{f}$, where the entries of
 the matrix $A$ contain the elements of the circuit, $\textbf{v}$
 is the unknown vector of voltages at the nodes, and
 $\textbf{f}$ contains the source terms. We now show how the
 matrix $A$ is built using MNA, which is similar to the
 finite element assembly procedure \cite{book_FEM}.

In an electric circuit, any electric device with two terminals is
 called an element, and its terminals are called nodes. An
  electrical circuit is thus a system consisting of a set of elements
 and a set of nodes. The contribution of every element to the matrix
 equation is described by means of a template, called an element stamp
 \cite{book_circuit}. Each element has a unique stamp which is given
 with the help of Kirchoff's Current Law (KCL) and Kirchoff's Voltage
 Law (KVL). For example, consider a resistor $R$ placed between nodes
 with voltages $v^{+}$ and $v^{-}$, as shown in Fig. \ref{stamp_R}
 (left).
\begin{figure}
\centering
\begin{minipage}{0.45\textwidth}
\centering
 \begin{circuitikz}[scale=0.6, every node/.style={scale=0.7}]
   \draw (3,2) to [R]  (7,2);
   \draw (5,2.5) node[above]{\large{$R$}};
   \draw (3,2.2) node[above]{\large{$v^{+}$}};
   \draw (7,2.2) node[above]{\large{$v^{-}$}};
   \draw (3.5,2) to [short,i=$ $] (4,2);
   \draw (3.75,1.9) node[below]{\large{$i$}};
   \draw[black,fill=black] (3,2) circle (.5ex);
   \draw[black,fill=black] (7,2) circle (.5ex);
\end{circuitikz}
\end{minipage}
\begin{minipage}{0.45\textwidth}
\begin{tabular}{ c c c c c } 
\hline
 & $v^{+}$ & $v^{-}$ & RHS\\
\hline
$v^{+}$  & $1/R$ & $-1/R$ & \\ 
$v^{-}$& $-1/R$ & $1/R$ & \\ 
\hline
\end{tabular}
\end{minipage}
\caption{Element stamp for a resistor $R$.}
\label{stamp_R}
\end{figure}
   Let $i$ be the current passing through it. Then using KCL, we have
    $i=v^{+}\left(\frac{1}{R}\right)-v^{-}\left(\frac{1}{R}\right)$. The
    corresponding element stamp is shown in Fig. \ref{stamp_R}
     (right). The element stamp for a capacitor $C$ is shown in Fig. \ref{stamp_C}.
\begin{figure}
\centering
\begin{minipage}{0.45\textwidth}
\centering
 \begin{circuitikz}[scale=0.6, every node/.style={scale=0.7}]
   \draw (3,2) to [C]  (7,2);
   \draw (5,2.5) node[above]{\large{$C$}};
   \draw (3,2.2) node[above]{\large{$v^{+}$}};
   \draw (7,2.2) node[above]{\large{$v^{-}$}};
   \draw[black,fill=black] (3,2) circle (.5ex);
  \draw[black,fill=black] (7,2) circle (.5ex);
\end{circuitikz}
\end{minipage}
\begin{minipage}{0.45\textwidth}
\begin{tabular}{ c c c c c  } 
\hline
 & $v^{+}$ & $v^{-}$ & RHS \\
\hline
$v^{+}$  & $C\frac{d}{dt}$ & -$C\frac{d}{dt}$  & \\ 
$v^{-}$& $-C\frac{d}{dt}$ & $C\frac{d}{dt}$ & \\ 
\hline
\end{tabular}
\end{minipage}
\caption{Element stamp for a capacitor $C$.}
\label{stamp_C}
\end{figure}

To build the "descriptor system" form
$M\dot{\textbf{v}}=K\textbf{v}+\tilde{\textbf{f}}$,
 we start with a zero RHS vector $\tilde{\textbf{f}}\in
 \mathbb{R}^N$, and zero matrices $M,K\in \mathbb{R}^{N\times N}$,
  where $N$ is the number of nodes in the circuit. The vector $\textbf{v}$ contains the unknown voltages at the nodes 
  and remains unchanged throughout
 this process. We then read every element
 of the circuit one by one. As an element is read, its element
 stamps are added to the matrices $M$ and $K$. Since the element stamp
 of a capacitor contains derivative terms, its element
  stamp is added to the matrix $M$, while the element stamp
   of a resistor is added in matrix the $K$. The independent voltage source 
 term is stamped into the vector $\tilde{\textbf{f}}$, see Fig. \ref{stamp_v}
 \begin{figure}
\centering 
\begin{minipage}{0.45\textwidth}
\centering
 \begin{circuitikz}[scale=0.6, every node/.style={scale=0.7}]
   \draw (3,2) to [american current source] (7,2);
   \draw (5,2.5) node[above]{\large{$i_{s}=I$}};
      \draw (3,2.2) node[above]{\large{$v^{+}$}};
   \draw (7,2.2) node[above]{\large{$v^{-}$}};
   \draw[black,fill=black] (3,2) circle (.5ex);
  \draw[black,fill=black] (7,2) circle (.5ex);
\end{circuitikz}
\end{minipage}
\begin{minipage}{0.45\textwidth}
\begin{tabular}{ c c c c c c } 
\hline
 & $v^{+}$ & $v^{-}$ & RHS \\
\hline
$v^{+}$  &    &    & -I  \\ 
$v^{-}$  &    &    & +I    \\
\hline
\end{tabular}
\end{minipage}
\caption{Element stamp for an independent current source.}
\label{stamp_v}
\end{figure}
 for the element stamp of an independent voltage source.
 This process continues until all elements of the circuit 
 are read and thus we obtain a system of differential
  equations in time of the form 
 $\dot{\textbf{v}}=A\textbf{v}+\textbf{f}$, where 
 $A:=M^{-1}K$ and $\textbf{f}:=M^{-1}\tilde{\textbf{f}}$.
 For the infinite RC circuit in Fig. \ref{RCinf},
 this system looks like
\begin{equation}\label{ce1}
   \frac{d\textbf{v}}{dt}=\begin{bmatrix}
                  \ddots & \ddots  & \ddots &      &   \\
                         & a_{-1}  & b_{0} & c_{0} &   & \\
                         &         & a_{0} & b_{1} & c_{1} &  & \\
                         &         &       & a_{1} & b_{2} & c_{2} &  & \\

                         &     &   &   & \ddots&\ddots&\ddots\\ 
                 \end{bmatrix} \textbf{v} + \textbf{f},
  \end{equation}
where 
\[
 a_{i}=\frac{1}{R_{i}C_{i+1}},\quad b_{i}=-\left(\frac{1}{R_{i-1}}+\frac{1}{R_{i}}\right)\frac{1}{C_{i}}, \quad  c_{i}=\frac{1}{R_{i}C_{i}}, \quad
i \in \mathbb{Z}, 
 \]
and $\textbf{f}=(I_{s}(t)/C_{-\infty}, 0, 0, . . . ,0)^{T}$.
 This system of ODEs can also be viewed as semi-discretization by the
  method of lines of a heat equation in space-time: if we consider
  small resistors and capacitors, $R_i\approx \Delta x$ and
  $C_i\approx \Delta x$, then each equation of the system \eqref{ce1}
  takes the form
\begin{equation}\label{HeatEquationDiscrete}
  \frac{dv_{i}}{dt}= \frac{v_{i-1}-2v_{i}+v_{i+1}}{\Delta x^2}+f_i,
\end{equation}
 and as $\Delta x\rightarrow 0$, we arrive at heat equation
\begin{equation}\label{HeatEquation}
  \frac{\partial v}{\partial t}=\frac{\partial^2 v}{\partial x^2}+f.
\end{equation}
 Hence we can consider the RC circuit of infinite length as an
 approximation for the one dimensional heat equation on the
 unbounded domain $\Omega_{x}=(-\infty ,\infty)$.

 For a short vector $\textbf{v}$, one can solve the
  system of ODEs \eqref{ce1} by simple discretization and matrix
 manipulation. However if the size of the vector $\textbf{v}$
  is in the millions and for more complicated circuits, solving the
  system \eqref{ce1} is computationally costly, and one needs
 parallel methods. One such method is WR as we now show.
  
\section{The WR Algorithm}\label{Sec:WRAlgo}

For a general WR algorithm, one decomposes the system \eqref{ce1}
 into many sub-systems, but to understand the key features of
 WR, we consider a decomposition into two sub-systems only.
  We decompose the infinite RC circuit from Fig. \ref{RCinf} 
  at node 0 into two equal sub-circuits, and motivated by the
   PDE relation shown in \eqref{HeatEquation} and the Schwarz WR algorithms  studied in  \cite{heat_GS,heat_GL} which use overlap,
  we include an overlap of $n$ nodes in the first sub-circuit.
   Let us denote the first sub-system unknowns by $\textbf{u(t)}$ 
   and the second sub-system unknowns by $\textbf{w(t)}$,
    $\textbf{u(t)}:=(\ldots,u_{-1},u_0,\ldots,u_{n})^T=(\ldots,v_{-1},v_0,\ldots,v_{n})^T $ and $\textbf{w(t)}
:=(w_{1},w_2,\ldots)^T=(v_{1},v_2,\ldots)^T$. Fig. \ref{WR_circuit}
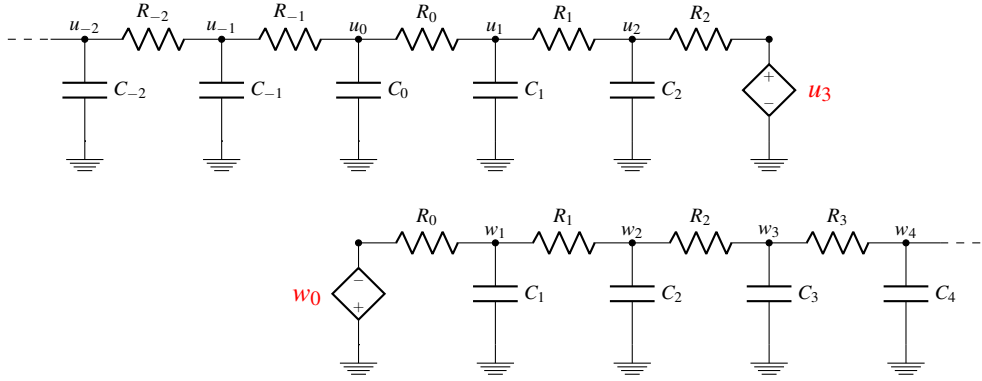
\begin{figure}[t]
  \begin{circuitikz}[scale=0.45, every node/.style={scale=0.7}]
   \draw [dashed] (-1.25,4) to (0,4);
   \draw  (0,4) to (1,4);
   \draw[black,fill=black] (1,4) circle (.7ex);
   \draw (1,4) node[above] {\large{$u_{-2}$}};
   \draw (1,4) to [C=\large{\(C_{-2}\)}]  (1,1);
   \draw (1,1) to [short] node [ground] {} (1,1);
   \draw  (1,4) to [R=\large{\(R_{-2}\)}] (5,4);
   \draw[black,fill=black] (5,4) circle (.7ex);
   \draw (5,4) node[above] {\large{$u_{-1}$}};
   \draw (5,4) to [C=\large{\(C_{-1}\)}]  (5,1);
   \draw (5,1) to [short] node [ground] {} (5,1);
   \draw  (5,4) to [R=\large{\(R_{-1}\)}] (9,4);
   \draw[black,fill=black] (9,4) circle (.7ex);
   \draw (9,4) node[above] {\large{$u_{0}$}};
   \draw (9,4) to [C=\large{\(C_{0}\)}]  (9,1);
   \draw (9,1) to [short] node [ground] {} (9,1);
   \draw  (9,4) to [R=\large{\(R_{0}\)}] (13,4);
   \draw[black,fill=black] (13,4) circle (.7ex);
   \draw (13,4) node[above] {\large{$u_{1}$}};
   \draw (13,4) to [C=\large{\(C_{1}\)}]  (13,1);
   \draw (13,1) to [short] node [ground] {} (13,1);
   \draw (13,4) to [R=\large{\(R_{1}\)}] (17,4);
   \draw[black,fill=black] (17,4) circle (.7ex);
   \draw (17,4) node[above] {\large{$u_{2}$}};
   \draw (17,4) to [C=\large{\(C_{2}\)}]  (17,1);
   \draw (17,1) to [short] node [ground] {} (17,1);
   \draw (17,4) to [R=\large{\(R_{2}\)}] (21,4);
   \draw[black,fill=black] (21,4) circle (.7ex);
   \draw (21,1) to [american controlled voltage source] (21,4);
   \draw (22.5,2) node[above] {\Large{\textcolor{red}{$u_{3}$}}};
   \draw (21,1) to [short] node [ground] {} (21,1);
   \draw (9,-2) to [american controlled voltage source] (9,-5);
   \draw (7.5,-4) node[above] {\Large{\textcolor{red}{$w_{0}$}}};
   \draw (9,-5) to [short] node [ground] {} (9,-5);
   \draw[black,fill=black] (9,-2) circle (.7ex);
   \draw  (9,-2) to [R=\large{\(R_{0}\)}] (13,-2);
   \draw[black,fill=black] (13,-2) circle (.7ex);
   \draw (13,-2) node[above] {\large{$w_{1}$}};
   \draw (13,-2) to [C=\large{\(C_{1}\)}]  (13,-5);
   \draw (13,-5) to [short] node [ground] {} (13,-5);
   \draw (13,-2) to [R=\large{\(R_{1}\)}] (17,-2);
   \draw[black,fill=black] (17,-2) circle (.7ex);
   \draw (17,-2) node[above] {\large{$w_{2}$}};
   \draw (17,-2) to [C=\large{\(C_{2}\)}]  (17,-5);
   \draw (17,-5) to [short] node [ground] {} (17,-5);
   \draw (17,-2) to [R=\large{\(R_{2}\)}] (21,-2);
   \draw[black,fill=black] (21,-2) circle (.7ex);
   \draw (21,-2) node[above] {\large{$w_{3}$}};
   \draw (21,-2) to [C=\large{\(C_{3}\)}]  (21,-5);
   \draw (21,-5) to [short] node [ground] {} (21,-5);
   \draw (21,-2) to [R=\large{\(R_{3}\)}] (25,-2);
   \draw[black,fill=black] (25,-2) circle (.7ex);
   \draw (25,-2) node[above] {\large{$w_{4}$}};
   \draw (25,-2) to [C=\large{\(C_{4}\)}]  (25,-5);
   \draw (25,-5) to [short] node [ground] {} (25,-5);
   \draw (25,-2) to  (26,-2);
   \draw [dashed] (26,-2) to (27.25,-2);   
   \end{circuitikz}
   \caption{Classical WR algorithm with 2 circuit nodes overlap.}
    \label{WR_circuit}
\end{figure}
shows the decomposition of the circuit when two nodes are in the
overlap. We see that when we decompose the circuit, we need to add a
voltage source $u_3$ in the first sub-circuit and $w_0$ in the
second. At each iteration $k$, these voltage sources are given by
transmission conditions which transfer information between the
sub-circuits. The system of differential equations for the decomposed
sub-circuits are
\begin{equation}\label{wr}
\begin{array}{l}
\dot{\textbf{u}}^{k+1}(t) = \begin{bmatrix}
 			\ddots & \ddots  &\ddots &    \\
			       &  a_{n-2}      & b_{n-1}     &c_{n-1}  \\
			       &         & a_{n-1}     & b_{n} 
		      \end{bmatrix} \begin{bmatrix}
		      \vdots\\
		      u_{n-1}(t)\\
		      u_{n}(t)
		      \end{bmatrix}^{k+1}   +\begin{bmatrix}
                    \vdots \\
                    0 \\
                    c_n u_{n+1}(t)
                    \end{bmatrix}^{k+1}+\begin{bmatrix}
                    \vdots\\
                    f_{n-1}\\
                    f_{n}
                    \end{bmatrix},\\      
                    \\     
\dot{\textbf{w}}^{k+1}(t) = \begin{bmatrix}
			b_{1}      & c_{1}      &       &    \\
			a_{1}      & b_{2}      & c_{2}     &  \\
			       & \ddots       & \ddots     & \ddots 
		      \end{bmatrix} \begin{bmatrix}
		      w_{1}(t)\\
		      w_{2}(t)\\
		      \vdots
		      \end{bmatrix}^{k+1}   +\begin{bmatrix}
                    a_{0}w_{0}(t) \\
                    0\\
                    \vdots 
                    \end{bmatrix}^{k+1}\begin{bmatrix}
                    f_{1}\\
                    f_{2}\\
                    \vdots
                    \end{bmatrix},                 
\end{array}
\end{equation}
where the unknowns $u_{n+1}^{k+1}(t)$ and $w_{0}^{k+1}(t)$ are
determined by the transmission conditions
\begin{equation}\label{tc}
  u_{n+1}^{k+1}(t)=w_{n+1}^{k}(t) ,\qquad w_{0}^{k+1}(t)=u_{0}^{k}(t).
\end{equation}	
 We see that at each iteration, these transmission conditions transfer
 voltages at the interface. Comparing this with Schwarz WR applied
 to PDEs in \cite{heat_GS,heat_GL}, these conditions can be
  interpreted as Dirichlet boundary conditions.

   To start the algorithm, we specify an initial guess for the
  solution $w^{0}_{n+1}(t)$ and $u_0^{0}(t)$, and then solve the
  sub-systems \eqref{wr} for all time $t\in(0,T]$. Note that
    the two sub-systems can be solved in parallel, since in the
    transmission conditions \eqref{tc} the two sub-systems use both
    data from the previous iteration, like in a block Jacobi  method \cite{wr_jacobi}
    from linear algebra. One could also do the solves sequentially,
    and use the newest value available in the second transmission
    condition, $w_{0}^{k+1}(t)=u_{0}^{k+1}(t)$, which would be more like
    a block Gauss Seidel iteration from linear algebra, but the
    convergence analysis is similar, so we will focus on the parallel
    version here.
    
    \subsection{Convergence Analysis of the classical WR Algorithm}

The presence of different resistors $R_{i}$ and capacitors $C_{i}$
  makes the convergence analysis difficult, so to simplify, 
  we assume that all resistors and capacitors have the same 
  value, i.e. $R_{i}:=R$ and $C_{i}:=C$ for all $i\in\mathbb{Z}$
   which leads to $a_{i}=a$, $b_{i}=-2a$ and $c_{i}=a$ for all $i\in  \mathbb{Z}$.

We study the convergence of the WR algorithm \eqref{wr}
  with transmission conditions  \eqref{tc} using
  the Laplace transform:

\begin{defn}
  If $f(t)$ is a real or complex valued function of the non negative
  real variable $t$, then the Laplace transform is defined by the
  integral
  \[
    \mathcal{L}(s)=\hat{f}(s):=\int_{0}^{\infty} e^{-st}f(t)dt, \qquad s\in \mathbb{C}.
  \]
\end{defn}
 Since the system \eqref{wr} is linear, the error equations correspond
 to the homogeneous problem, $\textbf{f}=0$, with zero initial
 conditions, $\textbf{u}^{k+1}(0)=\textbf{w}^{k+1}(0)=0$. The Laplace
 transform allows us to show that
 $\hat{u}^{k+1}_j(s)=\rho_{n,cla}(s)\hat{u}^{k-1}_j(s)$ and
 $\hat{w}^{k+1}_j(s)=\rho_{n,cla}(s)\hat{w}^{k-1}_j(s)$, where
 $\hat{u}_j^k$ and $\hat{w}_j^k$ are the Laplace transforms
 of $u_j^k$ and $w_j^k$, and $\rho_{n,cla}(s)$ is the
 convergence factor. We can then use the Parseval equality for
  $s=\sigma+i\omega$, $\sigma\ge 0$, to obtain a convergence estimate
  in the weighted $L^2$ norm $\|x(t)\|_{\sigma}:=\|e^{-\sigma t}
  x(t)\|_{L^{2}}$,
\begin{equation}
\|u_{j}^{2k}(t)\|_{\sigma}\le \Big(\sup_{\omega\in \mathbb{R}} \rho_{n,cla}(s)\Big)^{k}\|u_{j}^{0}(t)\|_{\sigma}, \ \|w_{j}^{2k}(t)\|_{\sigma}\le \Big(\sup_{\omega\in \mathbb{R}} \rho_{n,cla}(s)\Big)^{k}\|w_{j}^{0}(t)\|_{\sigma}.
\end{equation}
Convergence in Laplace space with a convergence factor
  $\rho_{n,cla}(s)$ less than one implies convergence in the time
 domain in the weighted norm $\|\cdot\|_{\sigma}$, and for
 $\sigma=0$, we obtain convergence in $L^{2}$.

 We now find a closed form for the convergence factor
 $\rho_{n,cla}$. The Laplace transform for $s\in \mathbb{C}$ of the WR
 algorithm \eqref{wr} is given by
\begin{equation}\label{wrlap}
\begin{array}{l}
s\hat{\textbf{u}}^{k+1} = \begin{bmatrix}
 			\ddots & \ddots  &\ddots &    \\
			       &  a      & b     & a  \\
			       &         & a     & b 
		      \end{bmatrix} \begin{bmatrix}
		      \vdots\\
		      \hat{u}_{n-1}\\
		      \hat{u}_{n}
		      \end{bmatrix}^{k+1}   +\begin{bmatrix}
                    \vdots \\
                    0 \\
                    a\hat{w}_{n+1}^{k}
                    \end{bmatrix}, \\     
                     
 s\hat{\textbf{w}}^{k+1} = \begin{bmatrix}
			b      & a      &       &    \\
			a      & b      & a     &  \\
			       & \ddots       & \ddots     & \ddots 
		      \end{bmatrix} \begin{bmatrix}
		      \hat{w}_{1}\\
		      \hat{w}_{2}\\
		      \vdots
		      \end{bmatrix}^{k+1}   +\begin{bmatrix}
                    a\hat{u}_{0}^{k} \\
                    0\\
                    \vdots 
                    \end{bmatrix}, 
\end{array}
\end{equation}
 where we have already included the transmission conditions \eqref{tc};
 this shows the dependence of $\hat{\textbf{u}}$ on $\hat{\textbf{w}}$
 and vice-versa. Solving the sub-systems \eqref{wrlap}
 requires solving a recurrence relation of the form $a
 y_{j-1}+(b-s)y_{j}+ay_{j+1}=0$, where
 $y_{j}=\hat{u}_{j}^{k+1}$, $\hat{w}_{j}^{k+1}$ for $j\in
 \mathbb{Z}$. In order to find the convergence factor of the WR
 algorithm, we need the following lemma.
\begin{lemma}\label{lem1}
  Let $a>0$, $b<0$, $i=\sqrt{-1}$, and $s:=\sigma+i \omega$, with
  $\sigma \ge 0$. For $-b\geq 2a $, the roots
  $\lambda_{1,2}:=\frac{s-b\pm \sqrt{(b-s)^2-4a^2}}{2a}$ of the
  characteristic equation $ay_{j-1}+(b-s)y_{j}+ay_{j+1}=0$ of
  the sub-systems in \eqref{wrlap} satisfy $|\lambda_{2}| \leq 1
  \leq |\lambda_{1}|$.
\end{lemma}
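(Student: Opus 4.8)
The plan is to avoid estimating the square root head‑on and instead exploit the product structure of the two roots together with a connectedness argument. First I would note that $\lambda_1,\lambda_2$ are the roots of the quadratic $a\lambda^2+(b-s)\lambda+a=0$ obtained by inserting $y_j=\lambda^j$ into the recurrence $ay_{j-1}+(b-s)y_j+ay_{j+1}=0$, so by Vieta's formula $\lambda_1\lambda_2=1$, and in particular $|\lambda_1|\,|\lambda_2|=1$ and neither root is zero. Hence it is enough to prove that the root carrying the $+$ sign satisfies $|\lambda_1|\ge 1$ on $\{\sigma\ge 0\}$; then $|\lambda_2|=1/|\lambda_1|\le 1$ follows automatically.

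The elementary core of the proof is the observation that a root $\lambda$ can lie on the unit circle only when $s$ is real with $s\le b+2a\le 0$. Indeed, if $|\lambda|=1$ then $1/\lambda=\bar\lambda$, and dividing the characteristic equation by $\lambda$ gives $a(\lambda+\bar\lambda)+(b-s)=0$, i.e. $2a\,\operatorname{Re}(\lambda)=s-b$; the left‑hand side is real and lies in $[-2a,2a]$, so $s$ is real and $s\in[b-2a,b+2a]$, and $b+2a\le 0$ because $-b\ge 2a$. Consequently, for $\sigma>0$ (where $s$ is either non‑real or real and positive) neither root lies on the unit circle.

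Next I would fix an analytic branch $g(s)$ of $\sqrt{(b-s)^2-4a^2}=\sqrt{(b-s-2a)(b-s+2a)}$: its branch points $s=b\pm 2a$ lie in the closed left half‑plane, touching the imaginary axis only at $s=0$ in the borderline case $-b=2a$, so a branch cut can be placed inside the left half‑plane and $g$ is analytic on a connected open set containing $\{\sigma\ge 0\}\setminus\{0\}$. Writing $\lambda_1(s)=\tfrac{(s-b)+g(s)}{2a}$, the function $|\lambda_1(s)|-1$ is continuous and, by the previous step, nowhere zero on the connected set $\{\sigma>0\}$, hence of constant sign there. Evaluating at a real $s_0>0$, where one may take $g(s_0)=+\sqrt{(s_0-b)^2-4a^2}\ge 0$ and $s_0-b>-b\ge 2a$, yields $\lambda_1(s_0)\ge\tfrac{s_0-b}{2a}>1$; therefore $|\lambda_1|>1$ on all of $\{\sigma>0\}$, and so $|\lambda_2|<1$ there.

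Finally I would reach the boundary $\sigma=0$ by continuity of the roots in $s$: on $\{\sigma=0\}$ the discriminant vanishes only at $s=0$ in the case $-b=2a$, where a direct computation gives $\lambda_1=\lambda_2=1$, so the non‑strict inequalities $|\lambda_2|\le 1\le|\lambda_1|$ hold on the whole set $\sigma\ge 0$. The only genuinely delicate point is the bookkeeping with the branch of the square root — ensuring that the "$+$'' root is globally the one of modulus $\ge 1$ — and this is precisely what the connectedness argument settles; the inequality itself is then immediate from $\lambda_1\lambda_2=1$.
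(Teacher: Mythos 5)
The paper does not actually prove Lemma~\ref{lem1}: it only cites \cite{RC_GK24}, so there is no in-paper argument to compare yours against, and your proof has to be judged on its own merits. Judged so, it is correct and complete. Vieta's formula $\lambda_1\lambda_2=1$ correctly reduces the claim to locating a single root relative to the unit circle; the identity $2a\operatorname{Re}(\lambda)=s-b$ for a unimodular root confines such roots to real $s\in[b-2a,b+2a]\subset(-\infty,0]$, so everywhere else in $\{\sigma\ge 0\}$ the two roots are automatically separated by the unit circle (their moduli being reciprocal); and the connectedness argument, anchored at a real $s_0>0$ where $\lambda_1(s_0)\ge\frac{s_0-b}{2a}>1$, fixes which of the two is the large one. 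The boundary $\sigma=0$ is handled correctly, including the degenerate double root $\lambda_1=\lambda_2=1$ at $s=0$ when $-b=2a$, which is precisely why the inequalities in the lemma are non-strict. One detail worth making explicit: the analytic branch $g$ you fix and the principal square root implicit in the statement's formula for $\lambda_{1,2}$ do agree on the closed right half-plane, because the discriminant $(b-s)^2-4a^2$ never meets the negative real axis there (it is real only for $\omega=0$, where it is nonnegative by $-b\ge 2a$), and two continuous square roots of a nonvanishing function on a connected set that agree at $s_0$ agree everywhere; without this remark your argument pins down \emph{a} labeling of the roots rather than \emph{the} labeling in the statement. Proofs of this kind of lemma in the OWR literature typically proceed more computationally, estimating $|\lambda_1|$ directly from the real and imaginary parts of the discriminant; your route via the unit-circle exclusion plus connectedness is more conceptual and arguably cleaner, at the cost of the branch-cut bookkeeping that you correctly flag as the only delicate point.
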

\begin{proof}
The proof of this lemma can be found in \cite{RC_GK24}.
\end{proof}

\begin{theorem}\label{thwr}
 The convergence factor $\rho_{n,cla}$ of the classical WR algorithm
 \eqref{wrlap} with $n$ nodes overlap for an RC circuit of infinite
 length is given by
  \begin{equation}\label{cfwr}
    \rho_{n,cla}(s)= \left(\frac{1}{\lambda_{1}^{2}}\right)^{n+1}.
  \end{equation}
\end{theorem}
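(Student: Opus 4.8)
The plan is to use the Laplace-transformed error recursion \eqref{wrlap} together with Lemma \ref{lem1}, following the classical recipe for WR convergence factors but keeping careful track of the overlap. Since the systems \eqref{wr} are linear, the errors solve the homogeneous equations with zero initial data, so it suffices to analyse \eqref{wrlap} directly. In each sub-circuit the unknowns obey the scalar recurrence $a y_{j-1}+(b-s)y_j+a y_{j+1}=0$, whose general solution is $y_j=A\lambda_1^{j}+B\lambda_2^{j}$ with $\lambda_{1,2}$ the roots from Lemma \ref{lem1}.

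First I would solve the first sub-circuit, which occupies the nodes $j\le n$ and extends to $-\infty$. Requiring $\hat{\textbf{u}}^{k+1}$ to stay bounded (indeed to decay) as $j\to-\infty$ kills the $\lambda_2$-mode, since $|\lambda_2|\le1\le|\lambda_1|$; hence $\hat u_j^{k+1}=A_{k+1}\lambda_1^{j}$, and one checks that this in fact holds for all $j\le n+1$, because the recurrence at $j=n$ together with the prescribed value $\hat u_{n+1}^{k+1}=\hat w_{n+1}^{k}$ from \eqref{tc} forces $\hat u_{n+1}^{k+1}=A_{k+1}\lambda_1^{n+1}$. Matching there gives $A_{k+1}=\hat w_{n+1}^{k}/\lambda_1^{n+1}$, so in particular $\hat u_0^{k+1}=A_{k+1}=\hat w_{n+1}^{k}/\lambda_1^{n+1}$. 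Symmetrically, the second sub-circuit occupies $j\ge1$ and extends to $+\infty$; boundedness as $j\to+\infty$ kills the $\lambda_1$-mode, so $\hat w_j^{k+1}=D_{k+1}\lambda_2^{j}$ for all $j\ge0$, and the transmission condition $\hat w_0^{k+1}=\hat u_0^{k}$ gives $D_{k+1}=\hat u_0^{k}$, hence $\hat w_{n+1}^{k+1}=\hat u_0^{k}\lambda_2^{n+1}$.

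Next I would compose the two half-steps. Substituting $\hat w_{n+1}^{k}=\hat u_0^{k-1}\lambda_2^{n+1}$ into $\hat u_0^{k+1}=\hat w_{n+1}^{k}/\lambda_1^{n+1}$ gives $\hat u_0^{k+1}=(\lambda_2/\lambda_1)^{n+1}\hat u_0^{k-1}$, and since $\hat u_j^{k+1}/\hat u_j^{k-1}=A_{k+1}/A_{k-1}=\hat u_0^{k+1}/\hat u_0^{k-1}$ is independent of $j$, we get $\hat u_j^{k+1}=(\lambda_2/\lambda_1)^{n+1}\hat u_j^{k-1}$ for every $j$; an identical computation yields $\hat w_j^{k+1}=(\lambda_2/\lambda_1)^{n+1}\hat w_j^{k-1}$. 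Thus $\rho_{n,cla}(s)=(\lambda_2/\lambda_1)^{n+1}$. Finally, $\lambda_1$ and $\lambda_2$ are the two roots of $a\lambda^2+(b-s)\lambda+a=0$, so their product equals $a/a=1$; therefore $\lambda_2/\lambda_1=\lambda_1^{-2}$ and $\rho_{n,cla}(s)=(1/\lambda_1^{2})^{n+1}$, which is \eqref{cfwr}.

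The recurrence solution and the amplitude bookkeeping are routine; the one place that needs attention — and the main obstacle — is the overlap accounting: one must notice that the first sub-circuit's modal solution is valid not only on its unknown nodes $j\le n$ but also at node $n+1$, where the transmission datum lives, so that the exponent is $n+1$ rather than $n$; and one must invoke Lemma \ref{lem1} to select, in each sub-circuit, the root that gives a bounded (decaying) solution towards the appropriate infinity. A minor caveat is the locus where $|\lambda_1|=|\lambda_2|=1$ — which, because $\lambda_1\lambda_2=1$ and $\lambda_1+\lambda_2=(s-b)/a$, reduces to $s=0$ — where the bounded-solution selection degenerates and $\rho_{n,cla}=1$; this is exactly the low-frequency stagnation of classical WR noted above, and being a single point it is harmless for the $\sup_{\omega}$ estimate.
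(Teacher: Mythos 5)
Your proof is correct and follows essentially the same route as the paper: Laplace transform, selection of the decaying mode $\lambda_1^j$ (resp.\ $\lambda_2^j$) in each half-infinite sub-circuit via Lemma \ref{lem1}, determination of the amplitudes from the Dirichlet transmission conditions \eqref{tc}, and reduction via Vieta's formula $\lambda_1\lambda_2=1$. Your way of fixing the amplitude---observing that the modal solution extends to the boundary node $n+1$ so that $A_{k+1}=\hat w_{n+1}^k/\lambda_1^{n+1}$ directly---is a slightly cleaner bookkeeping than the paper's explicit solve of the last row followed by simplification, but it is the same argument.
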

\begin{proof}
The detailed proof of this theorem can be found in
  \cite{RC_GK24}, but for completeness, we present a brief sketch of the
  proof. Solving the recurrence relation
  $ay_{j-1}+(b-s)y_{j}+ay_{j+1}=0$ for
  $y_{j}=\hat{u}_{j}^{k+1}$ gives $\hat{u}^{k+1}_{j}=A^{k+1}\lambda_{1}^j$
  for $j=(\ldots,n-2,n-1,n)$, and for $y_{j}=\hat{w}_{j}^{k+1}$
  we get $\hat{w}^{k+1}_{j}=D^{k+1}\lambda_{2}^j$ for $j\in\mathbb{N}$,
  with $\lambda_1$ and $\lambda_2$ defined in Lemma \ref{lem1}. The
  transmission conditions \eqref{tc} determine the constants $A^{k+1}$
  and $B^{k+1}$ and we find
\begin{equation}\label{eq1}
\begin{array}{l}
 \hat{u}^{k+1}_{j}=\left(\frac{-a}{a\lambda_{1}^{-1}+(b-s)}\right)\left(\frac{\hat{w}^{k}_{n+1}}{\lambda_{1}^{n}}\right)\lambda_{1}^{j} \qquad   \mbox{for $j= \ldots, n-2,n-1,n$},\\
 \hat{w}_{j}^{k+1} = \left(\frac{-a \hat{u}^{k}_{0}}{(b-s)+a\lambda_{2}}\right) \lambda_{2}^{j-1} \qquad  \mbox{ for $j\in\mathbb{N}$}. 
 \end{array}
\end{equation}
 The above expression together with Vieta's formulas
  $\lambda_1+\lambda_2 = (s-b)/a $ and $\lambda_1 \lambda_2= 1$ gives
  $\hat{u}_{j}^{k+1} = \rho_{n,cla}(s)\hat{u}_{j}^{k-1}$ for
  $j=(\ldots,n-1,n)$ and $\hat{w}_{j}^{k+1} =
  \rho_{n,cla}(s)\hat{w}_{j}^{k-1}$ for $j\in\mathbb{N}$ where
  $\rho_{n,cla}(s)$ is given in \eqref{cfwr}.
\end{proof}

From Lemma \ref{lem1}, we see that if $|b|=2a$ and $s=0$,
  the convergence factor $|\rho_{n,cla}(0)|=1$, while for
  $b=-(2+\epsilon)a$ with $\epsilon>0$, we have $|\rho_{n,cla}(s)|<1$.
  It is therefore interesting to study the case $\epsilon>0$ and
  then analyze the limit as $\epsilon\rightarrow0$. Infact, in
  this limit, the rate of convergence deteriorates at $\omega=0$, as
  we show on the left in Fig. \ref{fig_3_wr}.
\begin{figure}
  \includegraphics[width=0.49\textwidth,clip]{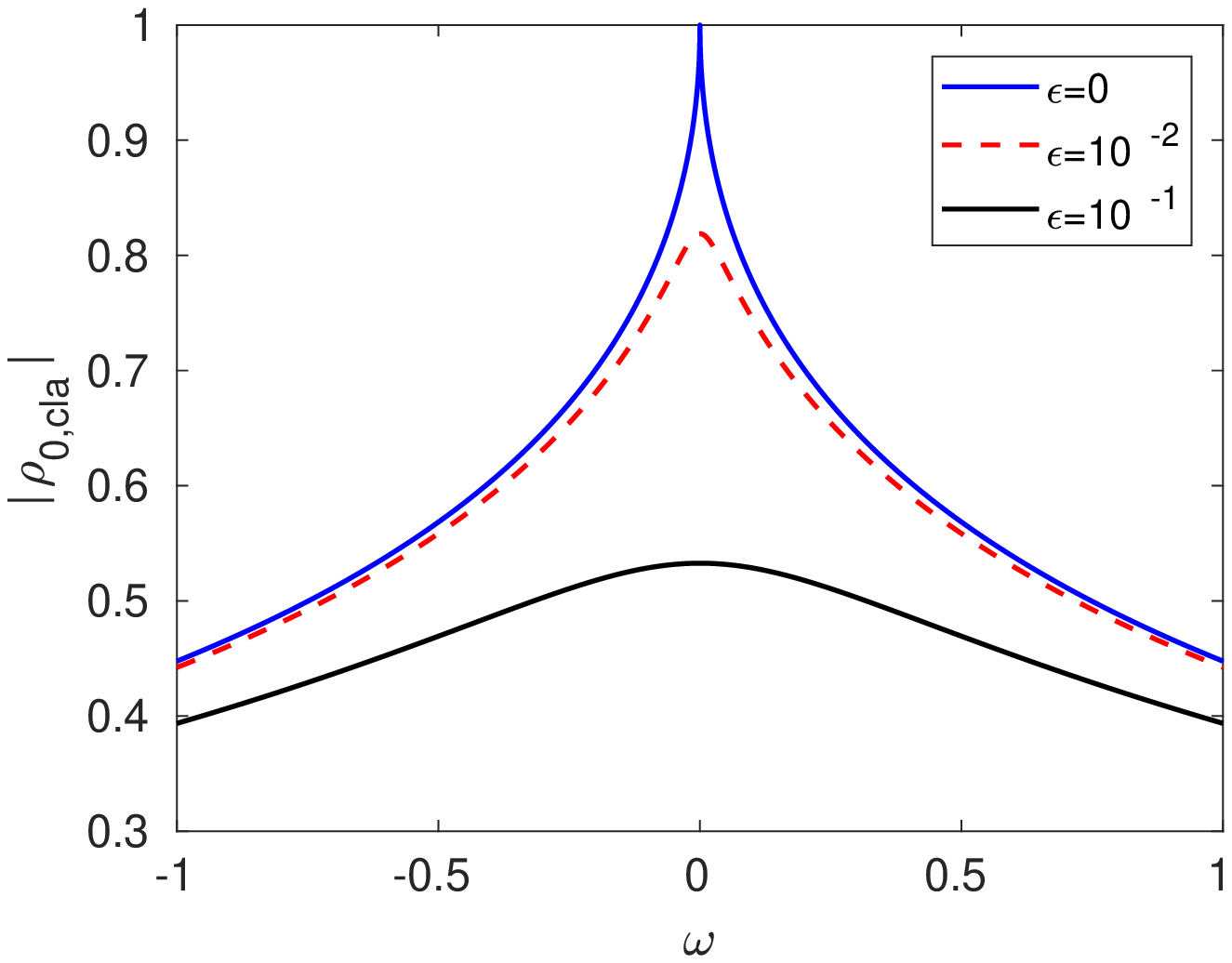}
  \includegraphics[width=0.49\textwidth,clip]{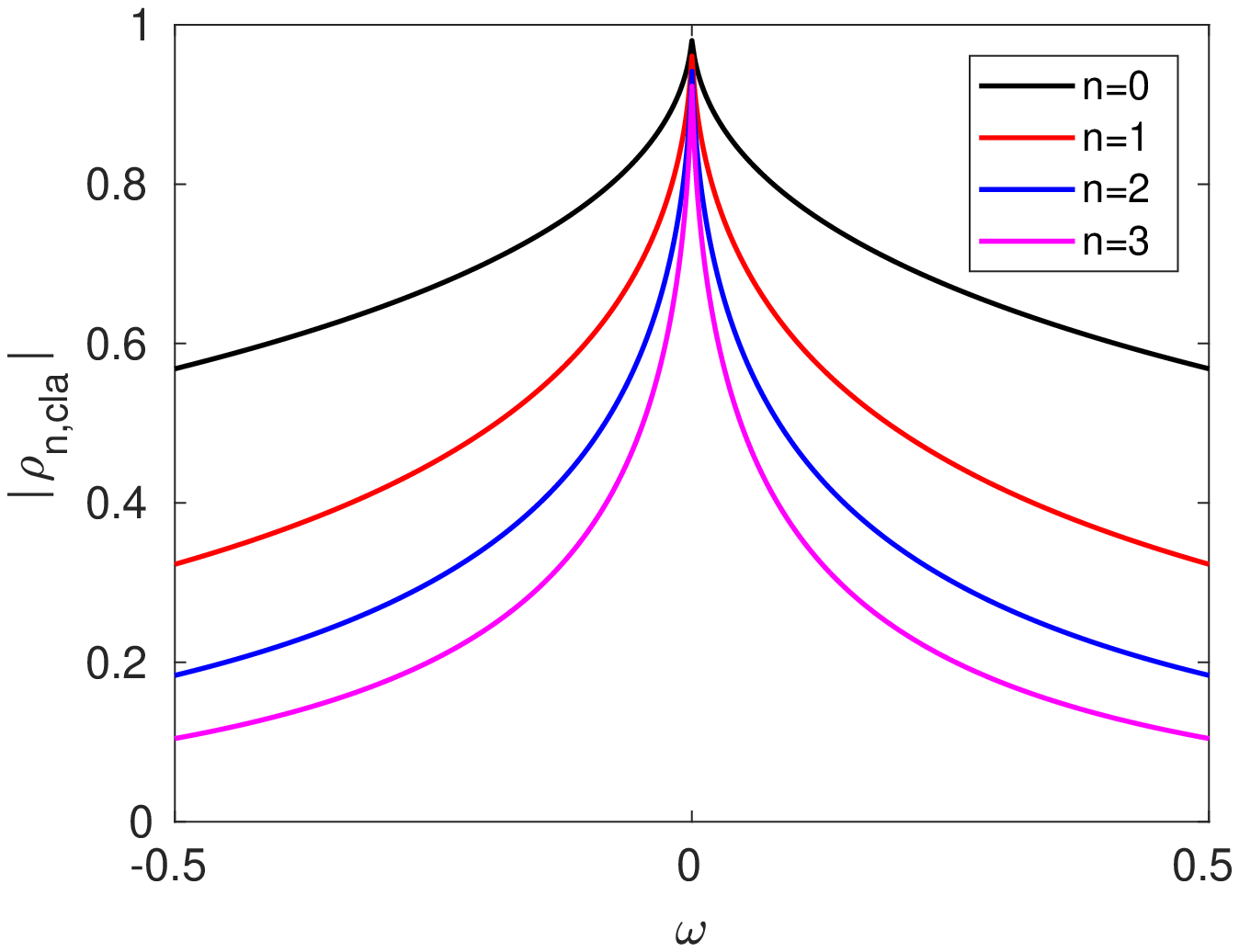}
  \caption{Convergence factor for $s=i\omega$ for $n=0$ and different values
    of $\epsilon$ (left) and for different overlaps with
      $\epsilon=1e-4$ (right).}
  \label{fig_3_wr}
\end{figure}
We also show on the right in Fig. \ref{fig_3_wr} that increasing
 the overlap increases the rate of convergence, but the effect is very
 small for $\omega$ close to zero when $s=i\omega$.

For circuits, the introduction of $\epsilon$ leads to the addition of
 a resistor $\tilde{R}=R/\epsilon$ at each node of the circuit (see
 Fig. \ref{eps_WR_circuit}).
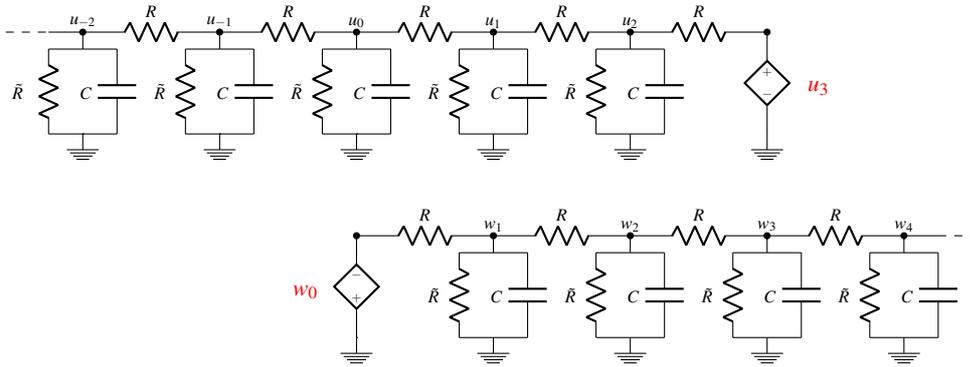
\begin{figure}[t]
  \centering
  \begin{circuitikz}[scale=0.45, every node/.style={scale=0.6}]
   \draw [dashed] (-1.25,4) to (0,4);
   \draw (0,4) to (1,4);
   \draw [black,fill=black] (1,4) circle (.7ex);
   \draw (1,4) node[above] {\large{$u_{-2}$}};
   \draw (1,4) to (1,3.5);
   \draw (1,3.5) to (2,3.5);
   \draw (2,3.5) to [C]  (2,1);
   \draw (1.1,1.9) node[above] {\large{$C$}};
   \draw (2,1) to (1,1);
   \draw (1,3.5) to (0,3.5);
   \draw (0,3.5) to [R]  (0,1);
   \draw (-0.9,1.9) node[above] {\large{$\tilde{R}$}};
   \draw (0,1) to (1,1);
   \draw (1,1) to [short] node [ground] {} (1,1);
   \draw (1,4) to [R=\large{\(R\)}] (5,4);
   \draw [black,fill=black] (5,4) circle (.7ex);
   \draw (5,4) node[above] {\large{$u_{-1}$}};
   \draw (5,4) to (5,3.5);
   \draw (5,3.5) to (6,3.5);
   \draw (6,3.5) to [C]  (6,1);
   \draw (5.1,1.9) node[above] {\large{$C$}};
   \draw (6,1) to (5,1);
   \draw (5,3.5) to (4,3.5);
   \draw (4,3.5) to [R]  (4,1);
   \draw (3.25,1.9) node[above] {\large{$\tilde{R}$}};
   \draw (4,1) to (5,1);
   \draw (5,1) to [short] node [ground] {} (5,1);
   \draw (5,4) to [R=\large{\(R\)}] (9,4);
   \draw [black,fill=black] (9,4) circle (.7ex);
   \draw (9,4) node[above] {\large{$u_{0}$}};
   \draw (9,4) to (9,3.5);
   \draw (9,3.5) to (10,3.5);
   \draw (10,3.5) to [C]  (10,1);
   \draw (9.1,1.9) node[above] {\large{$C$}};
   \draw (10,1) to (9,1);
   \draw (9,3.5) to (8,3.5);
   \draw (8,3.5) to [R]  (8,1);
   \draw (7.25,1.9) node[above] {\large{$\tilde{R}$}};
   \draw (9,1) to (8,1);
   \draw (9,1) to [short] node [ground] {} (9,1);
   \draw (9,4) to [R=\large{\(R\)}] (13,4);
   \draw [black,fill=black] (13,4) circle (.7ex);   
   \draw (13,4) node[above] {\large{$u_{1}$}};
   \draw (13,4) to (13,3.5);
   \draw (13,3.5) to (14,3.5);
   \draw (14,3.5) to [C]  (14,1);
   \draw (13.1,1.9) node[above] {\large{$C$}};
   \draw (14,1) to (13,1);
   \draw (13,3.5) to (12,3.5);
   \draw (12,3.5) to [R]  (12,1);
   \draw (11.25,1.9) node[above] {\large{$\tilde{R}$}};
   \draw (12,1) to (13,1);
   \draw (13,1) to [short] node [ground] {} (13,1);
   \draw (13,4) to [R=\large{\(R\)}] (17,4);
   \draw [black,fill=black] (17,4) circle (.7ex);
   \draw (17,4) node[above] {\large{$u_{2}$}};
   \draw (17,4) to (17,3.5);
   \draw (17,3.5) to (18,3.5);
   \draw (18,3.5) to [C]  (18,1);
   \draw (17.1,1.9) node[above] {\large{$C$}};
   \draw (18,1) to (17,1);
   \draw (17,3.5) to (16,3.5);
   \draw (16,3.5) to [R]  (16,1);
   \draw (15.25,1.9) node[above] {\large{$\tilde{R}$}};
   \draw (16,1) to (17,1);
   \draw (17,1) to [short] node [ground] {} (17,1);
   \draw (17,4) to [R=\large{\(R\)}] (21,4);
   \draw[black,fill=black] (21,4) circle (.7ex);
   \draw (21,1) to [american controlled voltage source] (21,4);
   \draw (22.5,2) node[above] {\Large{\textcolor{red}{$u_{3}$}}};
   \draw (21,1) to [short] node [ground] {} (21,1);
   \draw[black,fill=black] (9,-2) circle (.7ex);
   \draw (9,-2) to [american controlled voltage source] (9,-5);
   \draw (7.5,-4) node[above] {\Large{\textcolor{red}{$w_{0}$}}};
   \draw (9,-5) to [short] node [ground] {} (9,-5);
   \draw  (9,-2) to [R=\large{\(R\)}] (13,-2);
   \draw[black,fill=black] (13,-2) circle (.7ex);
   \draw (13,-2) node[above] {\large{$w_{1}$}};
   \draw  (13,-2) to (13,-2.5);
   \draw  (13,-2.5) to (14,-2.5);
   \draw (14,-2.5) to [C]  (14,-5);
   \draw (13.1,-4.1) node[above] {\large{$C$}};
   \draw  (14,-5) to (13,-5);
   \draw  (13,-5) to (12,-5);
   \draw (12,-2.5) to [R]  (12,-5);
   \draw (11.25,-4.1) node[above] {\large{$\tilde{R}$}};
   \draw  (12,-2.5) to (13,-2.5);
   \draw (13,-5) to [short] node [ground] {} (13,-5);
   \draw (13,-2) to [R=\large{\(R\)}] (17,-2);
   \draw[black,fill=black] (17,-2) circle (.7ex);
   \draw (17,-2) node[above] {\large{$w_{2}$}};
   \draw  (17,-2) to (17,-2.5);
   \draw  (17,-2.5) to (18,-2.5);
   \draw (18,-2.5) to [C]  (18,-5);
   \draw (17.1,-4.1) node[above] {\large{$C$}};
   \draw  (18,-5) to (17,-5);
   \draw  (17,-5) to (16,-5);
   \draw (16,-2.5) to [R]  (16,-5);
   \draw (15.25,-4.1) node[above] {\large{$\tilde{R}$}};
   \draw  (16,-2.5) to (17,-2.5);
   \draw (17,-5) to [short] node [ground] {} (17,-5);
   \draw (17,-2) to [R=\large{\(R\)}] (21,-2);
   \draw[black,fill=black] (21,-2) circle (.7ex);
   \draw (21,-2) node[above] {\large{$w_{3}$}};
    \draw  (21,-2) to (21,-2.5);
   \draw  (21,-2.5) to (22,-2.5);
   \draw (22,-2.5) to [C]  (22,-5);
   \draw (21.1,-4.1) node[above] {\large{$C$}};
   \draw  (22,-5) to (21,-5);
   \draw  (21,-5) to (20,-5);
   \draw (20,-2.5) to [R]  (20,-5);
   \draw (19.25,-4.1) node[above] {\large{$\tilde{R}$}};
   \draw  (20,-2.5) to (21,-2.5);
   \draw (21,-5) to [short] node [ground] {} (21,-5);
   \draw (21,-2) to [R=\large{\(R\)}] (25,-2);
   \draw[black,fill=black] (25,-2) circle (.7ex);
   \draw (25,-2) node[above] {\large{$w_{4}$}};
   \draw (25,-2) to (25,-2.5);
   \draw (25,-2.5) to (26,-2.5);
   \draw (26,-2.5) to [C]  (26,-5);
   \draw (25.1,-4.1) node[above] {\large{$C$}};
   \draw  (26,-5) to (25,-5);
   \draw  (25,-5) to (24,-5);
   \draw (24,-2.5) to [R]  (24,-5);
   \draw (23.25,-4.1) node[above] {\large{$\tilde{R}$}};
   \draw  (24,-2.5) to (25,-2.5);
   \draw (25,-5) to [short] node [ground] {} (25,-5);
   \draw (25,-2) to  (26,-2);
   \draw [dashed] (26,-2) to (27.25,-2);   
   \end{circuitikz}
   \caption{WR algorithm with $b=-(2+\epsilon)a$ and
     $\tilde{R}=R/\epsilon$}
    \label{eps_WR_circuit}
\end{figure}
  The inclusion of the resistors $\tilde{R}$ presents a practical case where there is a small 
  leakage of current in the dielectric current. The limit $\epsilon\rightarrow 0$ leads to the limit
 $\tilde{R}\rightarrow \infty$, which means that no current passes
 through this resistor. Thus considering $b=-(2+\epsilon)a$ and taking
 the limit $\epsilon\rightarrow 0$ states that the circuit in
 Fig. \ref{eps_WR_circuit} is a good approximation to circuit in
 Fig. \ref{WR_circuit}. Adding $\epsilon>0$ also corresponds to convergence  in a weighted norm, with choosing $\sigma=\epsilon>0$ in $s$. This can be seen be substituting $w(t)=u(t) e^{\epsilon t}$.

 \section{OWR Algorithm}\label{Sec:OWR}

The main drawback of the classical WR algorithm is that it is very
 slow especially when large time windows are used. Large time windows
 in real space correspond to small frequencies $\omega$ in
  $s=\sigma+i\omega$ in Laplace space. When $\epsilon\rightarrow
 0$ with $b=-(2+\epsilon)a$, we observe that
 $|\rho_{n,cla}(0)|\rightarrow 1$, which means the convergence rate
 slows down for small $\omega$, and increasing the overlap
 does mostly improve the convergence of higher frequencies
  $\omega$, as one can see in Fig. \ref{fig_3_wr} on the right. The
 Dirichlet transmission conditions \eqref{tc} which exchange just
 voltages at the interfaces are the main reason for this slow
 convergence. We thus search for better transmission conditions to
 exchange information between the sub-circuits. One way is to use
 optimized transmission conditions, which are defined for this RC
 circuit in \cite{RC_GK24} as
\begin{equation}\label{otc}
\begin{array}{l}
(u_{n+1}^{k+1}-u_{n}^{k+1})+\alpha u_{n+1}^{k+1}=(w_{n+1}^{k}-w_{n}^{k})+\alpha w_{n+1}^{k}, \\
(w_{1}^{k+1}-w_{0}^{k+1})+\beta w_{0}^{k+1}=(u_{1}^{k}-u_{0}^{k})+\beta u_{0}^{k},
\end{array}
\end{equation}
 where $\alpha$, $\beta \in \mathbb{R}$ and $k$ is the iteration
 index. These transmission conditions exchange both voltages and
 currents at the interface, which can be seen by dividing the
 first equation by $\alpha$ and the second by $\beta$. The term
 $\frac{u^{k+1}_{n+1}-u^{k+1}_{n}}{\alpha}$ can be viewed as a
 current and $u^{k+1}_{n+1}$ is a voltage. These transmission
 conditions are called optimized transmission conditions since we
  need to find the best (optimized) values for $\alpha$ and
  $\beta$ such that the convergence factor is as small as possible.

For circuits, the introduction of the new transmission
 conditions \eqref{otc} means two voltage sources need to be added at
 the interface of each sub-circuit. From Fig. \ref{OWR_circuit},
\begin{figure}
  \centering
  \begin{circuitikz}[scale=0.45, every node/.style={scale=0.7}]
   \draw [dashed] (-1.25,4) to (0,4);
   \draw (0,4) to (1,4);
   \draw [black,fill=black] (1,4) circle (.7ex);
   \draw (1,4) node[above] {\large{$u_{-2}$}};
   \draw (1,1) to [C=\large{\(C\)}]  (1,4);
   \draw (1,1) to [short] node [ground] {} (1,1);
   \draw (1,4) to [R=\large{\(R\)}] (5,4);
   \draw [black,fill=black] (5,4) circle (.7ex);
   \draw (5,4) node[above] {\large{$u_{-1}$}};
   \draw (5,1) to [C=\large{\(C\)}]  (5,4);
   \draw (5,1) to [short] node [ground] {} (5,1);
   \draw (5,4) to [R=\large{\(R\)}] (9,4);
   \draw [black,fill=black] (9,4) circle (.7ex);
   \draw (9,4) node[above] {\large{$u_{0}$}};
   \draw (9,1) to [C=\large{\(C\)}]  (9,4);
   \draw (9,1) to [short] node [ground] {} (9,1);
   \draw (9,4) to [R=\large{\(R\)}] (13,4);
   \draw [black,fill=black] (13,4) circle (.7ex);
   \draw (13,4) node[above] {\large{$u_{1}$}};
   \draw (13,1) to [C=\large{\(C\)}]  (13,4);
   \draw (13,1) to [short] node [ground] {} (13,1);
   \draw (17,4) to [R=\large{\(R\)}] (13,4);
   \draw (17,1) to [american controlled voltage source] (17,4);
   \draw (18.25,2) node[above] {\Large{\textcolor{red}{$w_{2}$}}};
   \draw (17,1) to [short] node [ground] {} (17,1);
   \draw (13,4) to (15,5);
   \draw (15,5) to [R=\large{\(R(1+\alpha)\)}] (19,5);
   \draw (19,5) to (20,5);   
   \draw (20,5) to [american controlled voltage source] (20,1);
   \draw (21.25,2.5) node[above] {\Large{\textcolor{red}{$w_{1}$}}};
   \draw (20,1) to [short] node [ground] {} (20,1);    
   \draw (13,-3) to [R=\large{\(R\)}] (9,-3);
   \draw (9,-3) to [american controlled voltage source] (9,-6);
   \draw (7.75,-5) node[above] {\Large{\textcolor{red}{$u_{0}$}}};
   \draw (9,-6) to [short] node [ground] {} (9,-6);
   \draw (13,-3) to (11,-2);
   \draw (7,-2) to [R=\large{\(R(1-\beta)\)}] (11,-2);
   \draw (7,-2) to (6,-2);   
   \draw (6,-6) to [american controlled voltage source] (6,-2);
   \draw (4.75,-4.5) node[above] {\Large{\textcolor{red}{$u_{1}$}}};
   \draw (6,-6) to [short] node [ground] {} (6,-6);       
   \draw[black,fill=black] (13,-3) circle (.7ex);
   \draw (13,-3) node[above] {\large{$w_{1}$}};
   \draw (13,-3) to [C=\large{\(C\)}]  (13,-6);
   \draw (13,-6) to [short] node [ground] {} (13,-6);
   \draw (13,-3) to [R=\large{\(R\)}] (17,-3);
   \draw[black,fill=black] (17,-3) circle (.7ex);
   \draw (17,-3) node[above] {\large{$w_{2}$}};
   \draw (17,-3) to [C=\large{\(C\)}]  (17,-6);
   \draw (17,-6) to [short] node [ground] {} (17,-6);
   \draw (17,-3) to [R=\large{\(R\)}] (21,-3);
   \draw[black,fill=black] (21,-3) circle (.7ex);
   \draw (21,-3) node[above] {\large{$w_{3}$}};
   \draw (21,-3) to [C=\large{\(C\)}]  (21,-6);
   \draw (21,-6) to [short] node [ground] {} (21,-6);
   \draw (21,-3) to [R=\large{\(R\)}] (25,-3);
   \draw[black,fill=black] (25,-3) circle (.7ex);
   \draw (25,-3) node[above] {\large{$w_{4}$}};
   \draw (25,-3) to [C=\large{\(C\)}]  (25,-6);
   \draw (25,-6) to [short] node [ground] {} (25,-6);
   \draw (25,-3) to  (26,-3);
   \draw [dashed] (26,-3) to (27.25,-3);   
   \end{circuitikz}
   \caption{OWR algorithm with 1 circuit node overlap.}
    \label{OWR_circuit}
\end{figure}
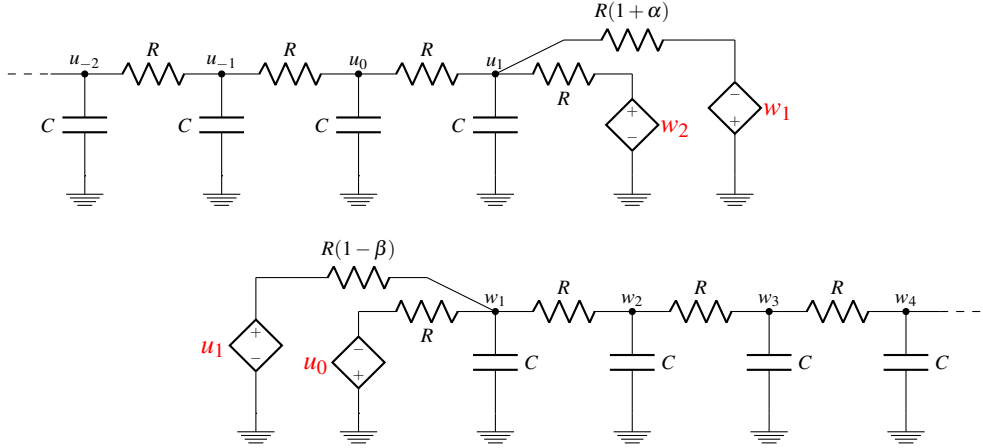
 we see that the resistors $R_{\alpha}:=R(1+\alpha)$ and
 $R_\beta:=R(1-\beta)$, which depend on the parameters $\alpha$
 and $\beta$, are added.  Similar to the WR algorithm, at each
 iteration the voltage sources $w_3$, $w_2$, $u_0$, $u_1$ are
 transferred between the sub-circuits.

 We can also interpret the new transmission conditions \eqref{otc}
  as Robin transmission conditions for the discretized heat equation
  \eqref{HeatEquationDiscrete}: if we divide the first equation of
 \eqref{otc} by $\bar{\alpha}:=\frac{\alpha}{p}$ for some $p>0$ and the
 second equation by $\bar{\beta}:=\frac{-\beta}{p}$, we obtain
\begin{eqnarray*}
\frac{u_{n+1}^{k+1}-u_{n}^{k+1}}{\bar{\alpha}}+p u_{n+1}^{k+1}&=&\frac{w_{n+1}^{k}-w_{n}^{k}}{\bar{\alpha}}+p w_{n+1}^{k}, \\
\frac{w_{1}^{k+1}-w_{0}^{k+1}}{\bar{\beta}}-p w_{0}^{k+1}&=&\frac{u_{1}^{k}-u_{0}^{k}}{\bar{\beta}}-p u_{0}^{k}.
\end{eqnarray*}
  If we consider now $\bar{\alpha}\approx \Delta x$ and
  $\bar{\beta}\approx \Delta x$, then the fractions inside the above
  equations represent discretization for the derivative $\frac{\partial
  u}{\partial x}$ and $\frac{\partial w}{\partial x}$, and
  we thus obtain in the limit Robin transmission conditions,
\begin{eqnarray*}
\left(\frac{\partial }{\partial x}+p\right)u_{n+1}^{k+1}&=&\left(\frac{\partial }{\partial x}+p\right) w_{n+1}^{k}, \\
\left(\frac{\partial }{\partial x}-p\right) w_{0}^{k+1}&=&\left(\frac{\partial }{\partial x}-p\right) u_{0}^{k}.
\end{eqnarray*} 

\subsection{Convergence of OWR}

As for the classical WR algorithm, we analyze the convergence of
  the OWR algorithm in Laplace space. We first rearrange the
  optimized transmission conditions as
\begin{equation}\label{eq2}
\begin{array}{l}
 u_{n+1}^{k+1}=\frac{u_{n}^{k+1}}{1+\alpha}+w_{n+1}^{k}-\frac{w_{n}^{k}}{1+\alpha}, \\ 
w_{0}^{k+1}=-\frac{w_{1}^{k+1}}{\beta-1}+u_{0}^{k} + \frac{u_{1}^{k}}{\beta-1},
\end{array}
\end{equation}
  where all the voltages $u_{i}$ and $w_{i}$ depend on time $t$. We
  substitute these rearranged transmission conditions into \eqref{wr}
  and take the Laplace transform to arrive at
\begin{equation}\label{owr}
\begin{array}{l}
s \hat{\textbf{u}}^{k+1} = \begin{bmatrix}
 			\ddots & \ddots  &\quad \ddots &    \\
			       &  a & b & a  \\
			       &         & a & b  +\frac{a}{\alpha+1}
		      \end{bmatrix} \begin{bmatrix}
		      \vdots\\
		      \hat{u}_{n-1}\\
		      \hat{u}_{n}
		      \end{bmatrix}^{k+1}   +\begin{bmatrix}
                    \vdots \\
                    0 \\
                    a\hat{w}_{n+1}^{k}-\frac{a}{\alpha +1}\hat{w}_{n}^{k}
                    \end{bmatrix},\\         
                     
s \hat{\textbf{w}}^{k+1} = \begin{bmatrix}
			b-\frac{a}{\beta-1}      & a      &       &    \\
			a      & b      & a     &  \\
			       & \ddots       & \ddots     & \ddots 
		      \end{bmatrix} \begin{bmatrix}
		      \hat{w}_{1}\\
		      \hat{w}_{2}\\
		      \vdots
		      \end{bmatrix}^{k+1}   +\begin{bmatrix}
                    a\hat{u}_{0}^{k}+\frac{a}{\beta-1}\hat{u}_{1}^{k} \\
                    0\\
                    \vdots 
                    \end{bmatrix}.                  
\end{array}
\end{equation}
 We need to give initial guesses $\hat{w}^{0}_{n+1}$, $\hat{w}^{0}_{n}$,
  $\hat{u}^{0}_{0}$ and $\hat{u}^{0}_{1}$ to start this algorithm.

\begin{theorem}\label{thowr} 
The convergence factor $\rho_{n}(s,\alpha,\beta)$ of the OWR algorithm
\eqref{owr} for an RC circuit of infinite length is given by
\begin{equation}\label{cfowr}
\rho_{n}(s,\alpha,\beta)=\left(\frac{\alpha+1 -\lambda_{1}}{\lambda_{1}(1+\alpha)-1}\right)\left(\frac{\lambda_{1}+\beta-1}{1+(\beta-1)\lambda_{1}}\right)\left(\frac{1}{\lambda_{1}^2}\right)^n.
\end{equation}
\end{theorem}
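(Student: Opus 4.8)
The plan is to mirror the proof of Theorem~\ref{thwr}. By linearity of \eqref{owr}, the errors satisfy the homogeneous version of that system with zero initial data, so in Laplace space they solve \eqref{owr} with the source terms dropped. Away from the two interface rows, both sub-systems obey the constant-coefficient recurrence $a\hat{y}_{j-1}+(b-s)\hat{y}_{j}+a\hat{y}_{j+1}=0$ whose roots are the $\lambda_{1},\lambda_{2}$ of Lemma~\ref{lem1}; by that lemma $|\lambda_{2}|\le 1\le|\lambda_{1}|$, and Vieta gives $\lambda_{1}\lambda_{2}=1$. First I would write the only bounded solution on each side: since the left sub-circuit extends to $j\to-\infty$, $\hat{u}^{k+1}_{j}=A^{k+1}\lambda_{1}^{j}$ for $j\le n$, and since the right one extends to $j\to+\infty$, $\hat{w}^{k+1}_{j}=W^{k+1}\lambda_{2}^{j}$ for $j\ge 1$, with scalar unknowns $A^{k+1},W^{k+1}$.

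Next I would insert these ansätze into the two modified rows of \eqref{owr} — the last row of the $\hat{\textbf{u}}$-block (diagonal entry $b+\tfrac{a}{\alpha+1}$, right-hand side $a\hat{w}^{k}_{n+1}-\tfrac{a}{\alpha+1}\hat{w}^{k}_{n}$) and the first row of the $\hat{\textbf{w}}$-block (diagonal entry $b-\tfrac{a}{\beta-1}$, right-hand side $a\hat{u}^{k}_{0}+\tfrac{a}{\beta-1}\hat{u}^{k}_{1}$). Using the characteristic identity in the form $a+(b-s)\lambda_{1}+a\lambda_{1}^{2}=0$ (and its analogue for $\lambda_{2}$) to cancel the interior neighbour term, each row should collapse to a single scalar equation, of the shape
\begin{equation*}
A^{k+1}\lambda_{1}^{n}\big(1-(\alpha+1)\lambda_{1}\big)=\hat{w}^{k}_{n}-(\alpha+1)\hat{w}^{k}_{n+1},\qquad W^{k+1}\big(\lambda_{2}+\beta-1\big)=(\beta-1)\hat{u}^{k}_{0}+\hat{u}^{k}_{1}.
\end{equation*}
Substituting $\hat{w}^{k}_{j}=W^{k}\lambda_{2}^{j}$, $\hat{u}^{k}_{j}=A^{k}\lambda_{1}^{j}$ and using $\lambda_{1}\lambda_{2}=1$ should yield the two one-step maps $A^{k+1}=W^{k}(1/\lambda_{1}^{2})^{n}\frac{1-(\alpha+1)\lambda_{2}}{1-(\alpha+1)\lambda_{1}}$ and $W^{k+1}=A^{k}\frac{\lambda_{1}+\beta-1}{\lambda_{2}+\beta-1}$.

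Composing these over a double step gives $A^{k+1}=\rho_{n}A^{k-1}$ and $W^{k+1}=\rho_{n}W^{k-1}$, hence $\hat{u}^{k+1}_{j}=\rho_{n}\hat{u}^{k-1}_{j}$ and $\hat{w}^{k+1}_{j}=\rho_{n}\hat{w}^{k-1}_{j}$ with $\rho_{n}=\frac{1-(\alpha+1)\lambda_{2}}{1-(\alpha+1)\lambda_{1}}\cdot\frac{\lambda_{1}+\beta-1}{\lambda_{2}+\beta-1}\cdot(1/\lambda_{1}^{2})^{n}$; the last step is to substitute $\lambda_{2}=1/\lambda_{1}$ and clear the powers of $\lambda_{1}$ in the first two factors (the stray $\lambda_{1}$ produced by the first factor cancels the one produced by the second) to recover the stated closed form \eqref{cfowr}.

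I expect the main obstacle to be the collapse of the two interface rows: one must apply the characteristic equation exactly right to eliminate the interior neighbour, and then keep careful track of the powers of $\lambda_{1}$ that the two scalar relations carry and that must cancel when the relations are composed — a slip there changes \eqref{cfowr} by a factor of $\lambda_{1}$. A secondary point is to check that the node indices $0,1$ and $n,n+1$ appearing in the transmission conditions \eqref{otc}--\eqref{eq2} lie within the ranges where the geometric ansätze are valid, which is immediate for overlap $n\ge 1$; the borderline case $n=0$ requires substituting the transmission conditions once more, but the algebra is the same.
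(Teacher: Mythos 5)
Your proposal is correct and follows essentially the same route as the paper: a bounded geometric ansatz $A^{k+1}\lambda_1^j$, $W^{k+1}\lambda_2^j$ on each side, the two interface relations (your one-step maps are exactly the paper's equations \eqref{eq3} up to an overall sign in the $\alpha$-factor), and composition over a double step together with $\lambda_1\lambda_2=1$ to obtain \eqref{cfowr}. The only cosmetic difference is that you derive the interface relations by substituting the ansatz into the modified rows of \eqref{owr} and invoking the characteristic equation, whereas the paper plugs the ansatz directly into the rearranged transmission conditions \eqref{eq2}; these are equivalent since \eqref{owr} was built from \eqref{eq2}.
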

\begin{proof}
To find the convergence factor, we proceed as in the proof of
 Theorem \ref{thwr} to arrive at
  $\hat{u}^{k+1}_{j}=A^{k+1}\lambda_{1}^{j}$ for $j=(\dots,n-1,n)$, and
  $\hat{w}^{k+1}_{j}=D^{k+1}_{j}\lambda_{2}^{j}$ for $j\in\mathbb{N}$. To
  determine the constants $A^{k+1}$ and $D^{k+1}$, we use the
  optimized transmission conditions \eqref{eq2} and get
\begin{equation}\label{eq3}
 A^{k+1}=B^{k}\left(\frac{\lambda_{2}(1+\alpha)-1}{\lambda_{1}(1+\alpha)-1}\right)\left(\frac{\lambda_{2}}{\lambda_{1}}\right)^{n},\quad
 B^{k+1}=A^{k}\left(\frac{\lambda_{1}+\beta-1}{\lambda_{2}+\beta-1}\right).
\end{equation}
 Using $\lambda_{1}\lambda_{2}=1$ and equations \eqref{eq3}, we arrive at 
\begin{eqnarray*}
\hat{u}_{j}^{k+1 }&=& A^{k+1} \lambda_{1}^{j} \qquad  \quad j\in (\dots,-2,-1,0,1,\dots,n) \\
 &=& \left(\frac{\lambda_{2}(1+\alpha)-1}{\lambda_{1}(1+\alpha)-1}\right)\left(\frac{\lambda_{1}+\beta-1}{\lambda_{2}+\beta-1}\right)\left(\frac{\lambda_{2}}{\lambda_{1}}\right)^{n} A^{k-1}\lambda_{1}^{j}\\ 
&=& \left(\frac{\alpha+1 -\lambda_{1}}{\lambda_{1}(1+\alpha)-1}\right)\left(\frac{\lambda_{1}+\beta-1}{1+(\beta-1)\lambda_{1}}\right)\left(\frac{1}{\lambda_{1}^2}\right)^{n}\hat{u}_{j}^{k-1} \\
&=& \rho_{n}(s,\alpha,\beta) \hat{u}_{j}^{k-1},
 \end{eqnarray*}
 where the convergence factor $\rho_{n}(s,\alpha,\beta)$ is given by
 \eqref{cfowr}.  Similarly, we can also show that
 $\hat{w}_{j}^{k+1}=\rho_{n}(s,\alpha,\beta) \hat{w}_{j}^{k-1}$, for
 $j\in\mathbb{N}$, and this completes the proof.
 \end{proof}

\begin{lemma}
If $\alpha>0$, $\beta<0$, and $\epsilon>0$, then the modulus
of the convergence factor $\rho_{n}(s,\alpha,\beta)$ of the OWR
algorithm is less than 1, that is, $|\rho_{n}(s,\alpha,\beta)|<1$ for all $n\ge
0$.
\end{lemma}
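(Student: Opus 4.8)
The plan is to show that every one of the three factors appearing in the product formula \eqref{cfowr} for $\rho_{n}(s,\alpha,\beta)$ has modulus at most one, with the first two having modulus \emph{strictly} less than one, so that the product is strictly less than one. First I would record the consequence of Lemma \ref{lem1} that is actually needed: when $\epsilon>0$ (equivalently $-b>2a$, or $\sigma=\epsilon>0$) the inequality of Lemma \ref{lem1} is strict, i.e. $|\lambda_{2}|<1<|\lambda_{1}|$. In particular $\lambda_{1}\neq 0$, and since $1/(1+\alpha)<1$ and $1/(1-\beta)<1$ while $|\lambda_{1}|>1$, none of the denominators $\lambda_{1}(1+\alpha)-1$ and $1+(\beta-1)\lambda_{1}$ vanishes. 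Moreover $|\lambda_{1}|>1$ immediately gives $\bigl|1/\lambda_{1}^{2}\bigr|^{n}\le 1$ for all $n\ge 0$, with equality exactly when $n=0$.

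Next I would bound the first factor. Put $\mu:=1+\alpha$, which is \emph{real} and $>1$ because $\alpha>0$. Using only that $\mu$ is real, a short computation of the two squared moduli gives the identity
\[
|\mu\lambda_{1}-1|^{2}-|\mu-\lambda_{1}|^{2}=(\mu^{2}-1)(|\lambda_{1}|^{2}-1).
\]
Since $\mu^{2}-1>0$ and $|\lambda_{1}|^{2}-1>0$, the right-hand side is strictly positive, so $|\mu-\lambda_{1}|<|\mu\lambda_{1}-1|$, i.e. the first factor of \eqref{cfowr} has modulus strictly below one. The second factor is handled identically with $\nu:=\beta-1$, which is real and satisfies $\nu<-1$ because $\beta<0$; the analogous identity is
\[
|1+\nu\lambda_{1}|^{2}-|\lambda_{1}+\nu|^{2}=(\nu^{2}-1)(|\lambda_{1}|^{2}-1)>0,
\]
so this factor, too, has modulus strictly below one. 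Multiplying the three estimates yields $|\rho_{n}(s,\alpha,\beta)|<1$ for every $n\ge 0$.

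I do not expect a real obstacle here: the whole content is the two elementary Blaschke-type identities above, whose derivation relies crucially — and only — on the fact that $1+\alpha$ and $\beta-1$ are \emph{real}, together with $|\lambda_{1}|>1$. The two points deserving a line of care are (i) checking that no denominator in \eqref{cfowr} vanishes, which is what the paragraph above does via $|\lambda_{1}|>1$, and (ii) the case $n=0$, where the third factor equals one but the first two are still strictly less than one in modulus, so the strict bound survives. The role of the hypothesis $\epsilon>0$ is precisely to upgrade $|\lambda_{1}|\ge 1$ to $|\lambda_{1}|>1$; with $\epsilon=0$ one has $\lambda_{1}=1$ at $\omega=0$ and all three factors degenerate to one, which is the borderline case the lemma excludes.
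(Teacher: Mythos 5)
Your proposal is correct and follows essentially the same route as the paper: the paper's chain of equivalences starting from $(\alpha+1)^2-1<[(\alpha+1)^2-1](x^2+y^2)$ is exactly your Blaschke-type identity $|\mu\lambda_1-1|^2-|\mu-\lambda_1|^2=(\mu^2-1)(|\lambda_1|^2-1)$ written out coordinate-wise, and the second factor is handled symmetrically in both arguments. Your explicit remarks on the non-vanishing denominators and the $n=0$ case are sensible additions but do not change the argument.
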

\begin{proof}
 Since $\lambda_{1}\in \mathbb{C}$, we assume $\lambda_1=x+iy$.
  Lemma \ref{lem1} states that for $\epsilon>0$, where 
  $b=-(2+\epsilon)a$, we have $|\lambda_{1}|>1$ and hence
   $x^2+y^2>1$. Further for $\alpha>0$, $(\alpha+1)^2-1>0$ and hence
\begin{eqnarray*}
&&(\alpha+1)^2-1 <  [(\alpha+1)^2-1](x^2+y^2)\\
&\iff &(\alpha+1)^2+x^2+y^2 < (\alpha+1)^2 x^2+(\alpha+1)^2 y^2+1\\
&\iff &(\alpha+1)^2+x^2+y^2-2 x (\alpha+1) <  (\alpha+1)^2 x^2+(\alpha+1)^2 y^2+1-2 x (\alpha+1)\\
&\iff & (\alpha+1-x)^2 + y^2 < ((\alpha+1) x-1)^2 + (\alpha+1)^2 y^2\\
&\iff & |\alpha+1-x-i y| < |(\alpha+1)(x+i y)-1|.
\end{eqnarray*}
  Similarly, for $\beta<0$, we can show $|\lambda_1+\beta-1|< |1+(\beta-1)       \lambda_1|$ and this completes the proof.
\end{proof}

  We observe from \eqref{cfowr} that the effect of
   overlap on the convergence factor given by $\left(\frac{1}{\lambda_{1}^2}\right)^n$ is the same for both the WR and the OWR algorithm.
    This means increasing the overlap increases the rate of
   convergence also for OWR. The convergence factor is also
    the same for all the circuit nodes irrespective of which 
  sub-circuit they belong to. Further, for fast convergence,
   we would like to have the convergence factor
    $|\rho_{n}(s,\alpha,\beta)|$ as small as possible. 
   The parameters $a$, $b$ represent circuit elements and cannot
   be changed, but we can choose $\alpha$ and $\beta$ such that
  $|\rho_{n}(s,\alpha,\beta)|$ becomes as small as possible.
 So what is the best possible choice for the parameters
  $\alpha$ and $\beta$?

\begin{theorem}\label{th_optimal}
For the circuit decomposition into two sub-circuits with only one interface, the Optimized Waveform Relaxation method converges in two iterations,
independently of initial guess and the overlap, if
\begin{equation}\label{opt_alpha}
  \alpha_{opt}:=\lambda_1 -1 \qquad \mbox{and} \qquad \beta_{opt}:=1-\lambda_1.
\end{equation}
\end{theorem}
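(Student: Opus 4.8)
The plan is to read the result off the closed-form convergence factor \eqref{cfowr} established in Theorem \ref{thowr}, and then turn ``convergence factor zero'' into ``convergence in two iterations'' using the structure of the iterates found in its proof.

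First I would substitute $\alpha=\alpha_{opt}=\lambda_1-1$ into \eqref{cfowr}: then $1+\alpha=\lambda_1$, so the first bracket becomes $\frac{\lambda_1-\lambda_1}{\lambda_1^2-1}=0$. Substituting $\beta=\beta_{opt}=1-\lambda_1$ gives $\beta-1=-\lambda_1$, so the second bracket becomes $\frac{\lambda_1-\lambda_1}{1-\lambda_1^2}=0$. Either one already forces $\rho_n(s,\alpha_{opt},\beta_{opt})=0$ for every $s$ with $\sigma\ge0$ and every overlap $n\ge0$, the overlap entering only through the harmless factor $(1/\lambda_1^2)^n$. Before concluding I would check that the denominators do not vanish: by Lemma \ref{lem1}, $\epsilon>0$ (that is, $-b>2a$) gives $|\lambda_1|>1$ and hence $\lambda_1^2\ne1$, so both brackets are well defined; the borderline point $\epsilon=0$, $\omega=0$ is handled as a limit, as discussed in Section \ref{Sec:WRAlgo}.

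Next I would translate this into the claimed iteration count. From the proof of Theorem \ref{thowr}, each iterate from the first on is a pure geometric mode, $\hat u_j^{k}=A^{k}\lambda_1^{j}$ for $j\le n$ and $\hat w_j^{k}=D^{k}\lambda_2^{j}$ for $j\in\mathbb N$ with $k\ge1$, where $A^1,D^1$ are determined by an arbitrary initial guess and are in general nonzero. In the second iteration the interface source fed to the $u$-subsystem in \eqref{owr} is $a(\hat w_{n+1}^1-\hat w_n^1/\lambda_1)$ for $\alpha=\alpha_{opt}$; since $\hat w^1$ has ratio $\lambda_2=1/\lambda_1$, this source vanishes identically, the $u$-subsystem becomes homogeneous, and $\hat u^2\equiv0$. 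Symmetrically $\hat w^2\equiv0$ for $\beta=\beta_{opt}$. Equivalently, in \eqref{eq3} the factor $\frac{\lambda_2(1+\alpha)-1}{\lambda_1(1+\alpha)-1}$ vanishes for $\alpha=\alpha_{opt}$, giving $A^2=0$, and $\frac{\lambda_1+\beta-1}{\lambda_2+\beta-1}$ vanishes for $\beta=\beta_{opt}$, giving $D^2=0$. Thus $\hat u^2=\hat w^2=0$ in Laplace space, and by the inverse Laplace transform together with the Parseval/weighted-norm argument of Section \ref{Sec:WRAlgo} we get $u_j^2(t)=w_j^2(t)=0$ for all $t\in(0,T]$: the iterates equal the exact solution of \eqref{ce1} after two sweeps, independently of the initial guess and of the overlap $n$. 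The reason the count is two rather than one is precisely that the first iterates $\hat u^1,\hat w^1$ still carry the arbitrary initial data, but only in the form of the geometric modes that the optimal conditions then annihilate.

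The step I expect to need the most care is not the algebra, which is essentially a one-line substitution, but the interpretation: $\alpha_{opt}$ and $\beta_{opt}$ depend on $s$ through $\lambda_1(s)=\frac{s-b+\sqrt{(b-s)^2-4a^2}}{2a}$, so they are not resistive circuit elements at all but $s$-dependent, hence nonlocal-in-time, transmission operators, the discrete analogue of transparent boundary conditions for the heat equation. This is exactly why $\alpha_{opt},\beta_{opt}$ cannot be implemented in the circuit of Fig. \ref{OWR_circuit} and why Section \ref{Sec:Optimization} instead seeks the best real constants approximating them, which is the genuinely hard part of the paper.
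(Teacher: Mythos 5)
Your proposal is correct and follows essentially the same route as the paper: substitute $\alpha_{opt}$ and $\beta_{opt}$ into the convergence factor \eqref{cfowr} to see that it vanishes identically (for every $s$ and every overlap $n$), and conclude from $\hat{u}_j^{k+1}=\rho_n(s,\alpha,\beta)\hat{u}_j^{k-1}$ — equivalently $A^{2}=D^{2}=0$ via \eqref{eq3} — that the second iterates are identically zero. Your added checks, namely that the denominators do not vanish because $|\lambda_1|>1$ for $\epsilon>0$ and the remark that $\alpha_{opt},\beta_{opt}$ are $s$-dependent and hence nonlocal in time, are consistent with and slightly more careful than the paper's one-line argument.
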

\begin{proof}
  Setting the convergence factor $\rho_{n}(s,\alpha,\beta)=0$, we find
  $\alpha=\lambda_1-1$ and $\beta=1-\lambda_1$. Since,
  $\hat{u}^{k+1}_{j}=\rho_{n}(s,\alpha,\beta)\hat{u}^{k-1}_{j}$ and
  $\hat{w}^{k+1}_{j}=\rho_{n}(s,\alpha,\beta)\hat{w}^{k-1}_{j}$, we have
  $\hat{u}^{2}_{j}$ and $\hat{w}^{2}_{j}$ identically zero and hence
  the OWR has converged in two iterations.
\end{proof}
 One can see that this is the best choice, since the solution
  in each sub-system depends on all the source terms $f_j$ and
  during the first iteration, each sub-system uses only the
  local $f_j$ to compute the approximation. It is only for the
  second iteration that information is transferred. Therefore,
  convergence cannot be achieved in less than 2 iterations.

  Since $\lambda_1$ is a complicated function of $s\in \mathbb{C}$, its
  inverse Laplace transform leads to non local operators in time
  for $\alpha_{opt}$ and $\beta_{opt}$. These non local operators
  are expensive to use since they require convolution operations. It
  is therefore of interest to approximate $\alpha_{opt}$ and
  $\beta_{opt}$ by a polynomial in $s$. In this paper, we will focus on
  approximation of $\alpha_{opt}$ and $\beta_{opt}$ by a constant.
  
\section{Optimization}\label{Sec:Optimization}

  Mathematically, we want $|\rho_{n}(s,\alpha,\beta)|\ll 1$, which leads
  to solving the min-max problem
\begin{equation}\label{min_max}
  \min_{\alpha,\beta}\left(\max_{s}|\rho_{n}(s,\alpha,\beta)|\right).
\end{equation}
  Since $s\in\mathbb{C}$ with $s=\sigma+i \omega$, the above
  optimization problem is in four variables and hence already very
  difficult to solve. We simplify the problem further using some
  assumptions and the following lemmas.

\begin{lemma}\label{max_prin}
  For $\alpha>0$, $\beta<0$, the maximum of $|\rho_{n}(s,\alpha,\beta)|$
  lies on the imaginary axis of the complex plane.
\end{lemma}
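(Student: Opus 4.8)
The plan is to establish this through the maximum modulus principle applied to $\rho_n(\cdot,\alpha,\beta)$ on the right half-plane $\mathbb{C}^+=\{s:\Re s>0\}$. First I would verify that $\rho_n$ is holomorphic on (a neighbourhood of) the closed half-plane $\overline{\mathbb{C}^+}$. The only possible singularities are the branch points of the square root defining $\lambda_1$ in Lemma~\ref{lem1}, located where $(b-s)^2=4a^2$, i.e.\ at $s=b\pm 2a$, together with the zeros of the two denominators $\lambda_1(1+\alpha)-1$ and $1+(\beta-1)\lambda_1$ appearing in \eqref{cfowr}. Since $b<0$ with $-b=(2+\epsilon)a$, $\epsilon>0$, both branch points $b\pm 2a$ lie strictly in $\{\Re s<0\}$, so the branch of $\lambda_1$ with $|\lambda_1|\ge 1$ is single-valued and holomorphic across all of $\overline{\mathbb{C}^+}$, and in fact $|\lambda_1|>1$ strictly there. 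For the denominators, the two inequalities derived in the proof of the previous lemma, namely $|\alpha+1-\lambda_1|<|\lambda_1(1+\alpha)-1|$ and $|\lambda_1+\beta-1|<|1+(\beta-1)\lambda_1|$ for $\alpha>0$, $\beta<0$, force both denominators to be nonzero on $\overline{\mathbb{C}^+}$. Hence $\rho_n$ is holomorphic there.

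Next I would control $\rho_n$ at infinity. Since $\lambda_1=\frac{s-b+\sqrt{(b-s)^2-4a^2}}{2a}\sim s/a$ as $|s|\to\infty$, each factor in \eqref{cfowr} has a finite limit: the first tends to $-1/(1+\alpha)$, the second to $1/(\beta-1)$, and $(\lambda_1^{-2})^n$ tends to $0$ for $n\ge 1$ and is $\to 1$ for $n=0$. In particular $\rho_n$ is bounded on $\mathbb{C}^+$ (indeed $|\rho_n|<1$ there by the previous lemma), and it extends continuously to the point $s=\infty$, its value coinciding with $\lim_{|\omega|\to\infty}|\rho_n(i\omega,\alpha,\beta)|$.

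With holomorphy and boundedness established, the conclusion follows from a Phragmén--Lindel\"of argument for the half-plane: a function holomorphic and bounded on $\mathbb{C}^+$ and continuous up to the boundary attains the supremum of its modulus on the boundary $i\mathbb{R}$. Equivalently, one may map $\mathbb{C}^+$ conformally onto the open unit disc via $s\mapsto(s-1)/(s+1)$, note that the composed function is holomorphic on the disc and continuous on its closure (using the limit at $s=\infty$), apply the ordinary maximum modulus principle on the closed disc, and observe that the bounding circle is the image of $i\mathbb{R}\cup\{\infty\}$. This yields $\sup_{\Re s\ge 0}|\rho_n(s,\alpha,\beta)|=\sup_{\omega\in\mathbb{R}}|\rho_n(i\omega,\alpha,\beta)|$, i.e.\ the maximum lies on the imaginary axis.

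The main obstacle I anticipate is the bookkeeping at the boundary and at infinity: one must pin down the branch of $\sqrt{(b-s)^2-4a^2}$ so that $\lambda_1$ is holomorphic across the \emph{entire} closed right half-plane (the branch cut must be pushed into $\{\Re s<0\}$, which is where $\epsilon>0$ is used), and one must acknowledge that if the supremum is only approached as $|\omega|\to\infty$ rather than attained at a finite frequency, it still counts as being ``on the imaginary axis'' in the closure sense, which is precisely why the continuous extension to $s=\infty$ is recorded. Everything else is the standard optimized-Schwarz reduction to the boundary.
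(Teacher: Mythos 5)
Your proposal is correct and follows essentially the same route as the paper, which (citing \cite{RC_GK24}) proves the lemma by establishing that $\rho_n(s,\alpha,\beta)$ is analytic in the right half of the complex plane and then invoking the maximum modulus principle. Your additional bookkeeping --- locating the branch points $s=b\pm 2a$ in $\{\Re s<0\}$ using $\epsilon>0$, ruling out zeros of the denominators via the inequalities from the preceding lemma, and handling the point at infinity through the conformal map to the disc --- is exactly the detail the paper's sketch leaves to the reference.
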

\begin{proof}
  The detailed proof of this lemma can be found in \cite{RC_GK24}. The
  idea is to show that $\rho_{n}(s,\alpha,\beta)$ is analytic in the
  right half of the complex plane and then to use the maximum modulus
  principle for analytic functions.
\end{proof}

\begin{lemma}\label{sym}
  For $\sigma=0$, $|\rho_{n}(\omega,\alpha,\beta)|$ is symmetric in
  $\omega$.
\end{lemma}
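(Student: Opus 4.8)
The plan is to use that, on the imaginary axis $s=i\omega$, replacing $\omega$ by $-\omega$ amounts to replacing $s$ by its complex conjugate $\bar s$, together with the fact that the convergence factor transforms nicely under conjugation because all the circuit data $a,b$ and the transmission parameters $\alpha,\beta$ are real. Concretely, I will show $\rho_{n}(\bar s,\alpha,\beta)=\overline{\rho_{n}(s,\alpha,\beta)}$; taking moduli with $s=i\omega$, so that $\bar s=-i\omega$, then yields $|\rho_{n}(-i\omega,\alpha,\beta)|=|\rho_{n}(i\omega,\alpha,\beta)|$, which is exactly the claimed symmetry in $\omega$.

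First I would record that, by \eqref{cfowr}, $\rho_{n}(s,\alpha,\beta)=R(\lambda_{1})$ is a fixed rational function $R$ of the single root $\lambda_{1}$, and that the coefficients of $R$ are real whenever $\alpha,\beta\in\mathbb{R}$. Hence it suffices to prove the identity $\lambda_{1}(\bar s)=\overline{\lambda_{1}(s)}$: once this is known, $\rho_{n}(\bar s,\alpha,\beta)=R(\overline{\lambda_{1}(s)})=\overline{R(\lambda_{1}(s))}=\overline{\rho_{n}(s,\alpha,\beta)}$, since conjugation commutes with rational functions that have real coefficients.

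Next I would verify $\lambda_{1}(\bar s)=\overline{\lambda_{1}(s)}$. Conjugating the characteristic equation $a\lambda^{2}+(b-s)\lambda+a=0$ and using $a,b\in\mathbb{R}$ shows that $\overline{\lambda_{1}(s)}$ and $\overline{\lambda_{2}(s)}$ are precisely the two roots of $a\lambda^{2}+(b-\bar s)\lambda+a=0$, i.e. the characteristic equation at $\bar s$. Since conjugation preserves modulus, $|\overline{\lambda_{1}(s)}|=|\lambda_{1}(s)|\ge 1\ge|\lambda_{2}(s)|=|\overline{\lambda_{2}(s)}|$, so by the characterisation in Lemma~\ref{lem1} the root of modulus at least one for $\bar s$ must be $\overline{\lambda_{1}(s)}$, giving $\lambda_{1}(\bar s)=\overline{\lambda_{1}(s)}$. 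Equivalently, one can substitute $s=i\omega$ directly into $\lambda_{1}=\frac{(s-b)+\sqrt{(s-b)^{2}-4a^{2}}}{2a}$ and observe that under $\omega\mapsto-\omega$ the radicand $(s-b)^{2}-4a^{2}=b^{2}-\omega^{2}-4a^{2}-2ib\omega$ is replaced by its conjugate, and the square root commutes with conjugation.

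The only delicate point, and the main obstacle, is the branch of the square root, since $\sqrt{\bar z}=\overline{\sqrt z}$ can fail when $z$ lies on the branch cut $(-\infty,0]$. I would dispose of it by noting that for $s=i\omega$ the radicand above has nonzero imaginary part $-2b\omega$ unless $b\omega=0$; since $b=-(2+\epsilon)a<0$, this forces $\omega=0$, where the symmetry statement is trivial. Thus the branch issue never arises for $\omega\neq 0$, and the identity $\lambda_{1}(\bar s)=\overline{\lambda_{1}(s)}$, hence the symmetry of $|\rho_{n}|$, holds for all $\omega$; alternatively, continuity of $\lambda_{1}$ and $\rho_{n}$ extends the identity to $\omega=0$ from its validity nearby, so no separate treatment of that point is needed.
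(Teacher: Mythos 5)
Your proof is correct and follows essentially the same route as the paper's: both reduce the claim to the conjugation identity $\overline{\lambda_{1}(-\omega)}=\lambda_{1}(\omega)$ and then push complex conjugation through the convergence factor, which is a rational function of $\lambda_{1}$ with real coefficients. The only difference is that where the paper simply asserts ``one can check'' that $\sqrt{r+ip}=z_1+iz_2$ implies $\sqrt{r-ip}=z_1-iz_2$, you justify this step more carefully via the conjugated characteristic polynomial together with the modulus ordering of Lemma~\ref{lem1}, and you explicitly dispose of the branch-cut issue by noting the radicand has nonzero imaginary part for $\omega\neq 0$.
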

\begin{proof}
  On the imaginary axis of the complex plane, $\sigma=0$, and hence from
  the definition of $\lambda_{1}$, we get
\begin{equation*}
  \lambda_{1}(\omega)=\frac{i\omega -b + \sqrt{(i\omega- b)^2-4a^2}}{2a} 
    =\frac{i\omega +(2+\epsilon)a + \sqrt{r + i p}}{2a},
\end{equation*}
  where $r:=\epsilon^2 a^2 - \omega^2 + 4\epsilon a^2$ and
  $p:=2(2+\epsilon)\omega a$. Further, letting $z_1+i 
  z_2:=\sqrt{r+i p}$, where $z_1$, $z_2 \in\mathbb{R}$, we get
  $\lambda_{1}(\omega)=x+iy=\frac{(2+\epsilon)a+z_1}{2a} + i
  \frac{\omega+z_2}{2a}$. One can check that if $\sqrt{r+i p}=z_1+i
  z_2$ then $\sqrt{r-i p}=z_1-i z_2$. Therefore,
  $\lambda_{1}(-\omega)=\frac{(2+\epsilon)a+z_1}{2a} - i
  \frac{\omega+z_2}{2a}$ which shows that
  $\overline{\lambda_{1}(-\omega)} =\lambda_{1}(\omega)$. Now,
\begin{eqnarray*}
|\rho_{n}(-\omega,\alpha,\beta)|&=&\left|\left(\frac{\alpha+1
    -\lambda_{1}(-\omega)}{[\lambda_{1}(-\omega)](1+\alpha)-1}\right)     \left(\frac{\lambda_{1}(-\omega)+\beta-1}{1+(\beta-1)[\lambda_{1}(-\omega)]}   \right)\left(\frac{1}{[\lambda_{1}(-\omega)]^2}\right)^n\right|\\
 &=&\left|\left(\frac{\overline{\alpha+1
    -\lambda_{1}(\omega)}}{\overline{[\lambda_{1}(\omega)](1+\alpha)-1}}\right)\right|\left|\left(\frac{\overline{\lambda_{1}(\omega)+\beta-1}}{\overline{1+(\beta-1)[\lambda_{1}(\omega)]}}\right)\right|\left|\left(\frac{1}{[\overline{\lambda_{1}(\omega)}]^2}\right)^n\right| \\
 &=&\left|\left(\frac{\alpha+1
    -\lambda_{1}(\omega)}{[\lambda_{1}(\omega)](1+\alpha)-1}\right)\right|\left|\left(\frac{\lambda_{1}(\omega)+\beta-1}{1+(\beta-1)[\lambda_{1}(\omega)]}\right)\right|\left|\left(\frac{1}{[\lambda_{1}(\omega)]^2}\right)^n\right| \\
    &=&|\rho_{n}(\omega,\alpha,\beta)|,
\end{eqnarray*}
  which concludes the proof.
\end{proof}

From Theorem \ref{th_optimal}, we observe that $\alpha_{opt}$
  and $\beta_{opt}$ are related to each other via the relation
  $\beta_{opt}=-\alpha_{opt}$, which suggests the natural
  assumption $\beta=-\alpha$. In our RC circuit of infinite length,
  this would mean that at the interface (where the circuit is split into
  two), the current flowing in both sub-circuits is equal but into
  opposite direction. This interpretation is easy to see for the
  non-overlapping case $n=0$. Recall that the terms
  $\frac{u^{k+1}_{n+1}-u^{k+1}_{n}}{\alpha}$ and
  $\frac{w^{k+1}_{1}-w^{k+1}_{0}}{\beta}$ are viewed as currents. Thus
  with $\beta=-\alpha$, their values are same but their sign is
  opposite.

Lemma \ref{max_prin} and Lemma \ref{sym} state that the min-max
  problem \eqref{min_max} needs to be solved for $s=i\omega$, $\omega\ge
 0$. However, for numerical calculations, we consider the time $t\in
  [0,T]$, and also a discretization with $\Delta t$ as the
  discretization parameter. Hence $\omega_{min}\le \omega \le
  \omega_{max}$, where we can estimate $\omega_{min}=\frac{\pi}{T}$ 
  and $\omega_{max}=\frac{\pi}{\Delta t}$. Note that 
  $\omega_{min}>0$, but to further simplify our analysis,
  we consider a wider range for $\omega$, that is, $\omega\in[0, \omega_{max}]$.  Therefore, our min-max
  problem \eqref{min_max} reduces to
\begin{equation}\label{min_max_reduced}
  \min_{\alpha}\left(\max_{0\le\omega\le \omega_{max}}|\rho_{n}(\omega,\alpha,-\alpha)|\right).
\end{equation}  
  We observe numerically, see Figure \ref{fig_1_equioscillation},
\begin{figure}
\includegraphics[width=0.49\textwidth,clip]{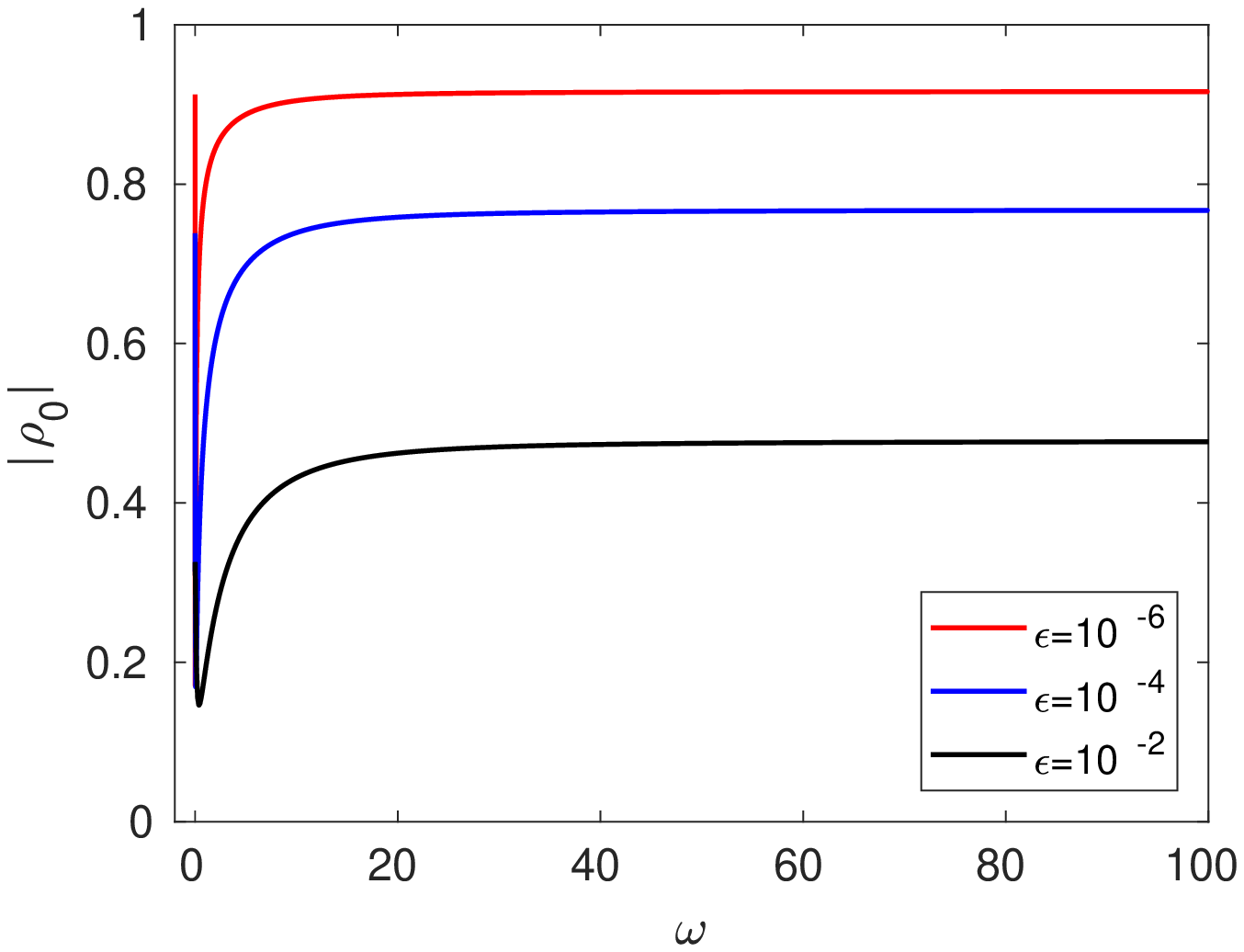}
\includegraphics[width=0.49\textwidth,clip]{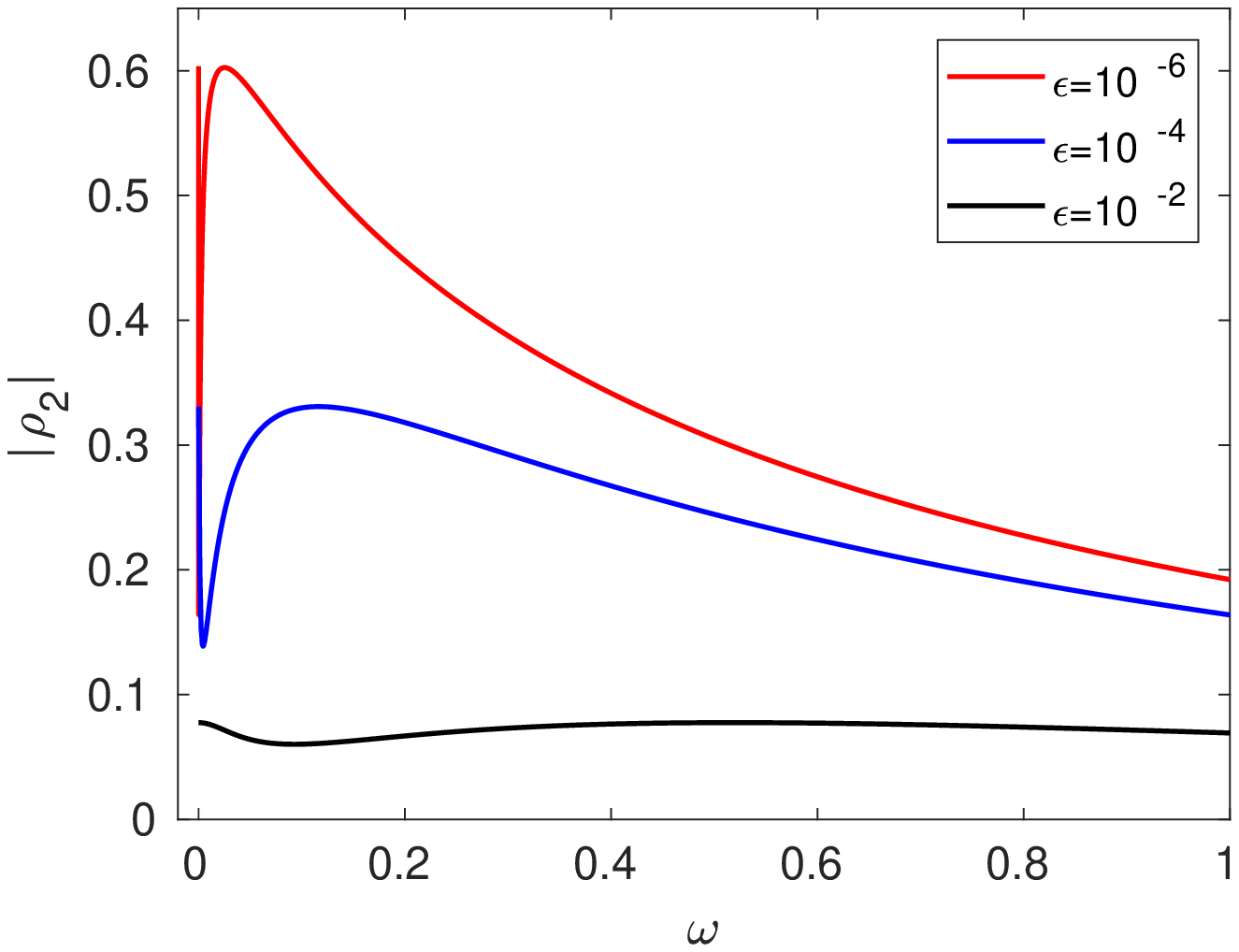}
\caption{Equi-oscillation for different values of $\epsilon$ for $n=0$ (left) and for $n=2$ (right).}
\label{fig_1_equioscillation}
\end{figure}
  that the solution for the min-max problem \eqref{min_max_reduced}
  is given by equi-oscillation. However the behavior of equi-oscillation
  is different for the non-overlapping and overlapping case. We first 
  analyze the non-overlapping case, that is, $n=0$, where the
  equi-oscillation occurs for $\omega=0$ and
  $\omega=\omega_{max}\rightarrow\infty$ (see the left plot of
  Fig. \ref{fig_1_equioscillation}). This means the optimized
  $\alpha$ denoted by $\alpha^{*}_0$ satisfies
\begin{equation}\label{equin0}
  |\rho_{0}(0,\alpha^{*}_0)|=|\rho_{0}(\infty,\alpha^{*}_0)|,
\end{equation}
  where we have dropped the parameter $-\alpha$ for simplicity,
 $\rho_n(\omega,\alpha):=\rho_n(\omega,\alpha,-\alpha)$.

  To start with, we find the explicit expression for
  $|\rho_{0}(\omega,\alpha)|$. In the proof of Lemma \ref{sym}, we
  expressed $\lambda_1=x+iy$, where $x=\frac{(2+\epsilon)a+z_1}{2a}$ and
  $y=\frac{\omega+z_2}{2a}$ and hence
  \begin{equation}\label{rho0}
  |\rho_{0}(\omega,\alpha)|=\frac{|\alpha+1-\lambda_1|^2}{|(1+\alpha)\lambda_{1}-1|^2}=:\frac{A(\omega,\alpha)}{B(\omega,\alpha)},
\end{equation}
where
\begin{eqnarray*}
A(\omega,\alpha)&=&|\alpha+1-\lambda_{1}|^{2}=\alpha^2 + x^2 + 1 + y^2 - 2x - 2x\alpha + 2\alpha, \\
B(\omega,\alpha)&=& |(1+\alpha)\lambda_{1}-1|^2= y^2 + 1 - 2x + 2\alpha y^2 - 2x\alpha + 2\alpha x^2 + x^2 + y^2\alpha^2 + x^2\alpha^2.
\end{eqnarray*}
$A(\omega,\alpha)$ and $B(\omega,\alpha)$ are complicated
  functions of $\omega$ and $\alpha$ which makes the analysis
  difficult. To simplify, we use asymptotic analysis to find an explicit
  expression for $\alpha^{*}_{0}$.  We first express $|\rho_{0}(0,\alpha)|$       and $|\rho_{0}(\infty,\alpha)|$ as polynomials in
  $\epsilon$ using the ansatz $\alpha= C_{\alpha} \epsilon^{\delta}$,
  where $\epsilon=-\frac{b}{a}-2$ and $\delta>0$. The dependence of
  $\alpha^{*}_{0}$ on $\epsilon$ for $n=0$ is illustrated numerically     in the left plot of Fig. \ref{fig_2_alpha_epsilon}.
\begin{figure}
  \includegraphics[width=0.49\textwidth,clip]{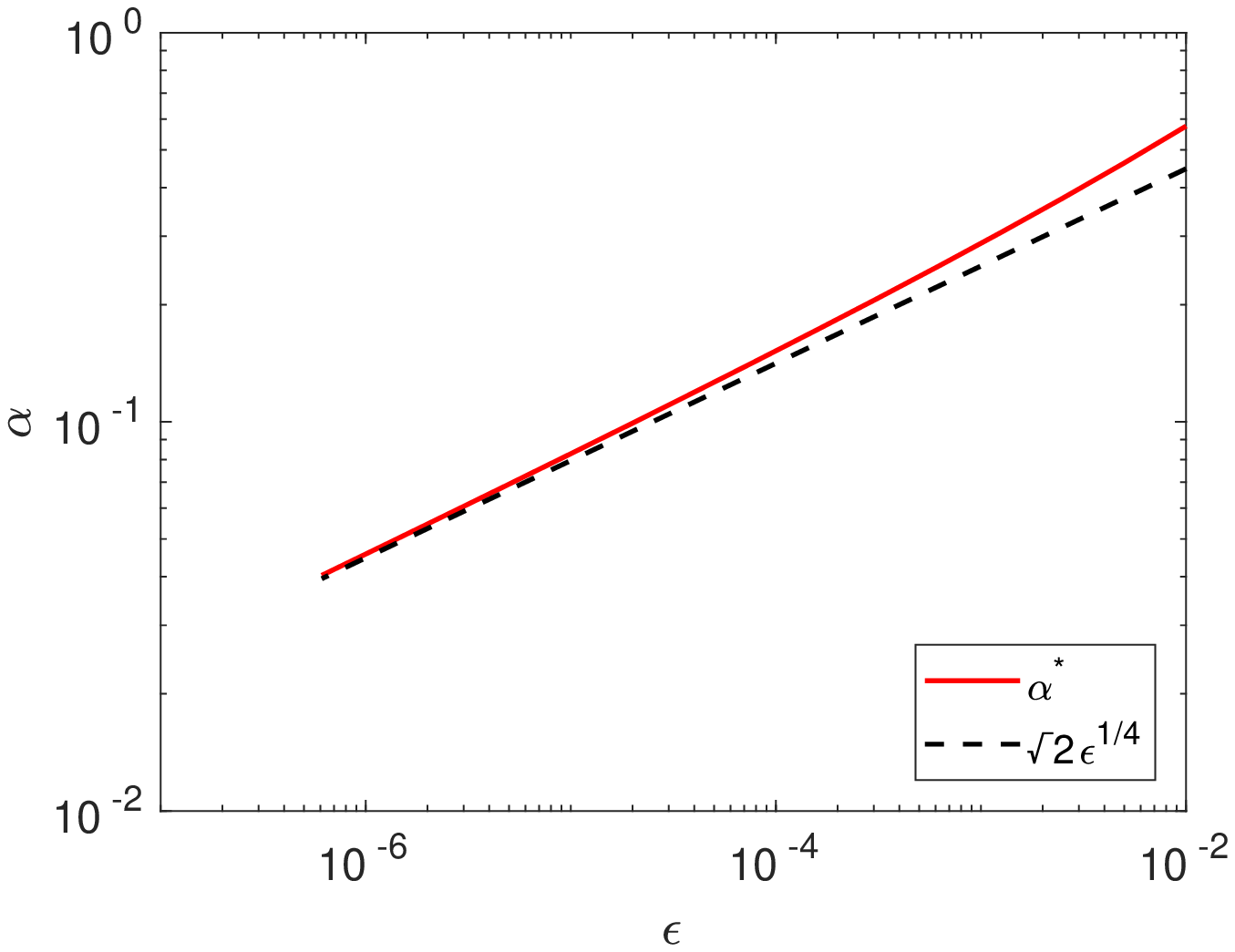}
  \includegraphics[width=0.49\textwidth,clip]{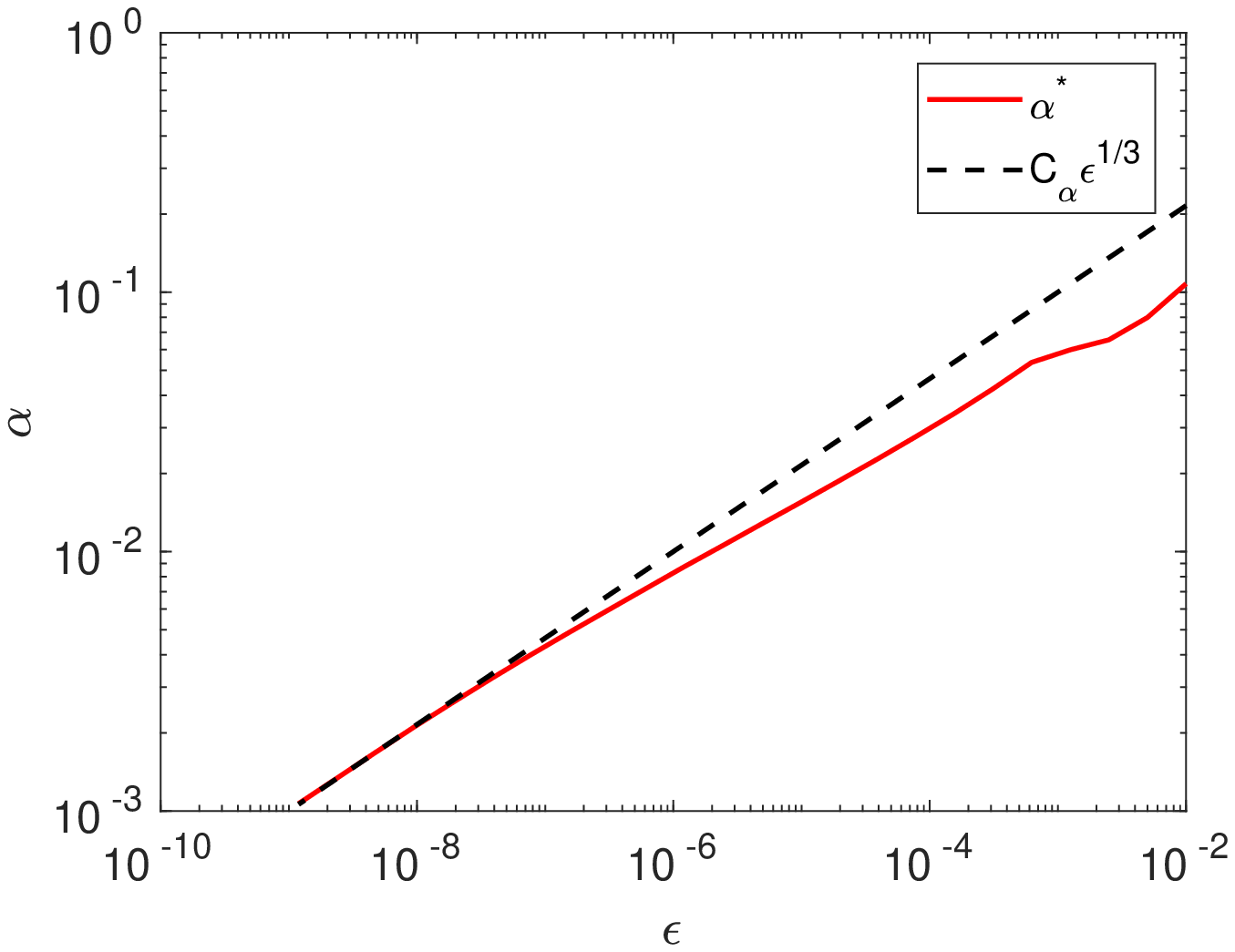}
  \caption{Dependence of $\alpha^{*}$ on $\epsilon$ for $n=0$ (left)
    and with $C_{\alpha}=\left(\frac{1}{n}\right)^{1/3}$ for $n=1$
    (right).}
  \label{fig_2_alpha_epsilon}
\end{figure}

\begin{lemma}\label{poly_rho0}
  For the non-overlapping case, $n=0$, and for small $\epsilon>0$, the
  modulus of the convergence factor $|\rho_{0}(\omega,\alpha)|$ for
  the OWR algorithm for $\omega=0$ and $\omega\rightarrow\infty$ is
  given by
\begin{equation}\label{rho0_0}
|\rho_{0}(0,\alpha)|=1-\frac{4}{C_{\alpha}}\epsilon^{\frac{1}{2}-\delta}+\mathcal{O}(\epsilon^{1-2\delta})
\end{equation}
and
\begin{equation}\label{rho0_inf}
|\rho_{0}(\infty,\alpha)|=1-2 C_{\alpha}\epsilon^{\delta} + \mathcal{O}(\epsilon^{2\delta}).
\end{equation}
\end{lemma}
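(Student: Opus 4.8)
The plan is to handle the two frequencies $\omega=0$ and $\omega\to\infty$ separately, in each case substituting the explicit form of $\lambda_1$ into the closed expression \eqref{rho0} and then expanding in $\epsilon$ after inserting the ansatz $\alpha=C_\alpha\epsilon^{\delta}$. A useful preliminary remark is that, because of the reduction $\beta=-\alpha$ suggested by \eqref{opt_alpha}, for $n=0$ the convergence factor \eqref{cfowr} is a perfect square, $\rho_0(\omega,\alpha)=\bigl((\alpha+1-\lambda_1)/((1+\alpha)\lambda_1-1)\bigr)^2$; hence it suffices to expand the single ratio $(\alpha+1-\lambda_1)/((1+\alpha)\lambda_1-1)$ and square at the very end, which automatically produces the factor $4$ in \eqref{rho0_0} and the factor $2$ in \eqref{rho0_inf}.

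For $\omega=0$ we have $s=0$, so $\lambda_1$ is real and, since $b=-(2+\epsilon)a$, $\lambda_1(0)=\tfrac12\bigl(2+\epsilon+\sqrt{(2+\epsilon)^2-4}\,\bigr)=\tfrac12\bigl(2+\epsilon+\sqrt{\epsilon(4+\epsilon)}\,\bigr)$. I would expand the square root in powers of $\sqrt\epsilon$ to get $\lambda_1(0)=1+\sqrt\epsilon+\tfrac\epsilon2+\mathcal O(\epsilon^{3/2})$. Writing $\mu:=\lambda_1(0)-1=\sqrt\epsilon+\mathcal O(\epsilon)$, the numerator of the ratio is $\alpha-\mu$ and the denominator is $(1+\alpha)(1+\mu)-1=\alpha+\mu+\alpha\mu$. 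Substituting $\alpha=C_\alpha\epsilon^{\delta}$ (with $\delta<\tfrac12$, so that $\epsilon^{\delta}\gg\sqrt\epsilon$ and $\epsilon^{1/2-\delta}$ is small), dividing numerator and denominator by $\alpha$, and expanding the quotient by the geometric series $\tfrac{1-t}{1+t}=1-2t+\mathcal O(t^2)$ at $t=\tfrac1{C_\alpha}\epsilon^{1/2-\delta}$, gives the ratio $1-\tfrac2{C_\alpha}\epsilon^{1/2-\delta}+\mathcal O(\epsilon^{1-2\delta})$; squaring then yields \eqref{rho0_0}.

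For $\omega\to\infty$ I would first note, from Lemma \ref{lem1} together with Vieta's relations $\lambda_1\lambda_2=1$ and $\lambda_1+\lambda_2=(s-b)/a$, that $|\lambda_1|\to\infty$ as $\omega\to\infty$. Dividing numerator and denominator of the ratio by $\lambda_1$ then gives $(\alpha+1-\lambda_1)/((1+\alpha)\lambda_1-1)\to -1/(1+\alpha)$, so that $|\rho_0(\infty,\alpha)|=1/(1+\alpha)^2$ exactly. Inserting $\alpha=C_\alpha\epsilon^{\delta}$ and expanding $(1+C_\alpha\epsilon^{\delta})^{-2}=1-2C_\alpha\epsilon^{\delta}+3C_\alpha^{2}\epsilon^{2\delta}-\cdots$ produces \eqref{rho0_inf}.

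The step that needs the most care is the order bookkeeping in the $\omega=0$ computation: besides the leading correction $\epsilon^{1/2-\delta}$, the denominator also contains the cross term $\alpha\mu$, which after normalisation by $\alpha$ contributes at order $\epsilon^{1/2}$, and the $\tfrac\epsilon2$ in $\lambda_1(0)$ contributes at order $\epsilon^{1-\delta}$. One must check that all such contributions are $\mathcal O(\epsilon^{1-2\delta})$, which is the case precisely when $\delta\ge\tfrac14$; this is harmless here, since the value of $\delta$ forced later by equating \eqref{rho0_0} and \eqref{rho0_inf} (namely $\tfrac12-\delta=\delta$, i.e.\ $\delta=\tfrac14$) lies in that range. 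Once the expansion of $\lambda_1(0)$ in powers of $\sqrt\epsilon$ is in hand, the remaining manipulations are routine algebra.
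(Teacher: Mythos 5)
Your proposal is correct and follows essentially the same route as the paper: expand $\lambda_1(0)=1+\sqrt{\epsilon}+\tfrac{\epsilon}{2}+\mathcal{O}(\epsilon^{3/2})$, insert the ansatz $\alpha=C_\alpha\epsilon^{\delta}$, and use $\lambda_1\to\infty$ for the high-frequency endpoint. The only cosmetic difference is that you expand the single ratio $(\alpha+1-\lambda_1)/((1+\alpha)\lambda_1-1)$ and square at the end, whereas the paper expands $A=|\alpha+1-\lambda_1|^2$ and $B=|(1+\alpha)\lambda_1-1|^2$ directly; your observation that the stated remainder $\mathcal{O}(\epsilon^{1-2\delta})$ absorbs the $\mathcal{O}(\epsilon^{1/2})$ contributions only when $\delta\ge\tfrac14$ is correct and is in fact slightly more careful bookkeeping than in the paper's own proof.
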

\begin{proof}
  We first find asymptotic expressions for $x$ and $y$ which are
  functions of $\omega$. From the expression of
  $\lambda_{1}(\omega)=\frac{i\omega -b + \sqrt{(i\omega-
    b)^2-4a^2}}{2a}$, substituting $\omega=0$, $b=-(2+\epsilon)a$ and
using a Taylor expansion for
$(1+\epsilon)^{1/2}=1+\frac{\epsilon}{2}-\frac{\epsilon^2}{8}+\mathcal{O}(\epsilon^3)$ gives
\begin{equation*}
\lambda_{1}(0)=\frac{2+\epsilon}{2}+\frac{\sqrt{4\epsilon+\epsilon^2}}{2}
= 1+\frac{\epsilon}{2}+\epsilon^{1/2}\sqrt{1+\frac{\epsilon}{4}} = 1+ \epsilon^{1/2} +\frac{\epsilon}{2} + \mathcal{O}(\epsilon^{3/2}).
\end{equation*}
  Thus we have $x=1+ \epsilon^{1/2} +\frac{\epsilon}{2} + 
  \mathcal{O}(\epsilon^{3/2})$ and $y=0$. Using the ansatz
  $\alpha=C_{\alpha}\epsilon^{\delta}$, we arrive at
  $A(0,\alpha)=C_{\alpha}^2 \epsilon^{2\delta}-2 C_{\alpha} \epsilon^{\delta    +1/2}+\epsilon
+ \mathcal{O}(\epsilon^{\delta+1})$ and similarly,
$B(0,\alpha)=C_{\alpha}^2 \epsilon^{2 \delta}+2 C_{\alpha} \epsilon^{\delta+1/2}+2 C_{\alpha}^2 \epsilon^{2\delta+1/2}+\mathcal{O}(\epsilon)$.
  Therefore,
\begin{eqnarray*}
|\rho_{0}(0,\alpha)|&=&\frac{A(0,\alpha)}{B(0,\alpha)}=\frac{1-\frac{2}{C_{\alpha}}\epsilon^{1/2-\delta}+\frac{1}{C^{2}_{\alpha}}\epsilon^{1-2\delta}+\mathcal{O}(\epsilon^{1-\delta})}{1+\frac{2}{C_{\alpha}}\epsilon^{1/2-\delta}+2 \epsilon^{1/2}+\mathcal{O}(\epsilon^{1-2\delta})} \\
&=&\left(1-\frac{2}{C_{\alpha}}\epsilon^{1/2-\delta}+\frac{\epsilon^{1-2\delta}}{C^{2}_{\alpha}}+\mathcal{O}(\epsilon^{1-\delta})\right)\left(1-\frac{2}{C_{\alpha}}\epsilon^{1/2-\delta}+\frac{2\epsilon^{1-2\delta}}{C^{2}_{\alpha}}+\mathcal{O}(\epsilon^{1/2})\right) \\
&=& 1-\frac{4}{C_{\alpha}}\epsilon^{1/2-\delta} + \mathcal{O}(\epsilon^{1/2}).
\end{eqnarray*}
  Since $\lim_{\omega\rightarrow\infty}\lambda_1(\omega)=\infty$,
   it is easier to find an expression for $|\rho_{0}(\infty,\alpha)|$
    which can be rewritten as
\begin{eqnarray*}
|\rho_{0}(\infty,\alpha)|&=&\lim_{\omega\rightarrow\infty}\frac{A(\omega,\alpha)}{B(\omega,\alpha)}=\lim_{\omega\rightarrow\infty}\left|\frac{\frac{\alpha+1}{\lambda_1(\omega)}-1}{1+\alpha-\frac{1}{\lambda_1(\omega)}}\right|^2
=\left|\frac{1}{1+\alpha}\right|^2= \frac{1}{1+2 C_{\alpha}\epsilon^{\delta}+C^{2}_{\alpha}\epsilon^{2\delta}}\\
&=&1-2 C_{\alpha}\epsilon^{\delta} + \mathcal{O}(\epsilon^{2\delta}),
\end{eqnarray*}
which concludes the proof.
\end{proof}

\begin{theorem}\label{opt_alpha_n_0}
For the OWR algorithm with no overlap, $n=0$ and for 
small $\epsilon>0$, if $\alpha^{*}_{0}=\sqrt{2}\epsilon^{1/4}$,
 then the convergence factor $\rho_{0}$ satisfies
\begin{equation}
|\rho_{0}(\omega,\alpha)|\le|\rho_{0}(0,\alpha^{*}_{0})|
 \sim 1-2\sqrt{2}\epsilon^{1/4}+\mathcal{O}(\epsilon^{1/2}).
\end{equation}
\end{theorem}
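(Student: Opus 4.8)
The plan is to pin down the exponent $\delta$ and the constant $C_\alpha$ in the ansatz $\alpha=C_\alpha\epsilon^{\delta}$ by imposing the equi-oscillation condition \eqref{equin0}, to read off the asymptotic value of the convergence factor at $\omega=0$, and finally to verify that with this choice $\omega=0$ is indeed where $|\rho_0(\omega,\alpha_0^*)|$ attains its maximum over the admissible frequencies. Throughout I work with the reduced problem \eqref{min_max_reduced}, i.e.\ $\beta=-\alpha$, so that $\rho_0(\omega,\alpha)=\bigl(\frac{\alpha+1-\lambda_1}{(\alpha+1)\lambda_1-1}\bigr)^2$ and $|\rho_0(\omega,\alpha)|=A(\omega,\alpha)/B(\omega,\alpha)$ as in \eqref{rho0}.

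First I would use the two expansions of Lemma~\ref{poly_rho0}: with $\alpha=C_\alpha\epsilon^{\delta}$, \eqref{rho0_0} gives $|\rho_0(0,\alpha)|=1-\frac{4}{C_\alpha}\epsilon^{1/2-\delta}+\mathcal{O}(\epsilon^{1-2\delta})$ and \eqref{rho0_inf} gives $|\rho_0(\infty,\alpha)|=1-2C_\alpha\epsilon^{\delta}+\mathcal{O}(\epsilon^{2\delta})$. Equating the leading corrections, as demanded by \eqref{equin0}, first forces the powers of $\epsilon$ to agree, $\frac12-\delta=\delta$, hence $\delta=\frac14$, and then forces the coefficients to agree, $\frac{4}{C_\alpha}=2C_\alpha$, hence $C_\alpha=\sqrt2$; this is exactly $\alpha_0^*=\sqrt2\,\epsilon^{1/4}$. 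Substituting $\delta=\frac14$, $C_\alpha=\sqrt2$ back into \eqref{rho0_0} then yields $|\rho_0(0,\alpha_0^*)|=1-\frac{4}{\sqrt2}\epsilon^{1/4}+\mathcal{O}(\epsilon^{1/2})=1-2\sqrt2\,\epsilon^{1/4}+\mathcal{O}(\epsilon^{1/2})$, which is the asymptotic value on the right-hand side.

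The remaining and, I expect, the main obstacle is the inequality $|\rho_0(\omega,\alpha_0^*)|\le|\rho_0(0,\alpha_0^*)|$ for every $\omega$, since Lemma~\ref{poly_rho0} only controls the two endpoints and the equi-oscillation visible in Fig.~\ref{fig_1_equioscillation} is only a numerical observation. Here I would exploit that $|\rho_0(\omega,\alpha_0^*)|$ is the squared modulus of a single Möbius function of $\lambda_1(\omega)$, namely $\frac{1}{1+\alpha_0^*}\bigl|\frac{\lambda_1-(1+\alpha_0^*)}{\lambda_1-1/(1+\alpha_0^*)}\bigr|$, evaluated along the curve $\omega\mapsto\lambda_1(\omega)$ from the proof of Lemma~\ref{sym}. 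Writing $\lambda_1=e^{\zeta}$ with $\mathrm{Re}\,\zeta>0$, the relation $2\cosh\zeta=\frac{s-b}{a}$ at $s=i\omega$ shows that $|\lambda_1(\omega)|$ is strictly increasing in $\omega\ge0$ and that $\mathrm{Re}\,\lambda_1(\omega)>\frac{2+\epsilon}{2}>1>1/(1+\alpha_0^*)$; using this one shows that $\omega\mapsto|\rho_0(\omega,\alpha_0^*)|$ decreases from its value at $\omega=0$, passes through a single interior minimum, and increases back to $|\rho_0(\infty,\alpha_0^*)|$, which by the previous paragraph coincides with $|\rho_0(0,\alpha_0^*)|$ up to $\mathcal{O}(\epsilon^{1/2})$; hence the maximum is attained at the endpoints and the stated inequality holds. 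In practice this reduces to differentiating $A(\omega,\alpha_0^*)/B(\omega,\alpha_0^*)$ in $\omega$ and checking that the sign of the resulting rational function changes exactly once, which is the only genuinely computational step; combining this with the second paragraph completes the proof.
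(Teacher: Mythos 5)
Your first two paragraphs reproduce the paper's proof exactly: the paper also takes the numerically observed equi-oscillation between $\omega=0$ and $\omega\rightarrow\infty$ as the characterization of the optimum, invokes Lemma~\ref{poly_rho0}, matches exponents to get $\delta=\tfrac14$ and coefficients to get $C_{\alpha}=\sqrt{2}$, and reads off $|\rho_0(0,\alpha_0^*)|\sim 1-2\sqrt{2}\,\epsilon^{1/4}$. Where you differ is your third paragraph: the paper's proof stops after the coefficient matching and justifies the inequality $|\rho_{0}(\omega,\alpha)|\le|\rho_{0}(0,\alpha^{*}_{0})|$ only by appeal to the numerically observed equi-oscillation in Fig.~\ref{fig_1_equioscillation}, whereas you attempt an analytic argument via the M\"obius-function structure of $\rho_0$ in $\lambda_1(\omega)$ and monotonicity of $|\lambda_1(\omega)|$. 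That is a genuine (and worthwhile) addition, but as written it is only a sketch --- the claim that the sign of the derivative of $A/B$ changes exactly once is asserted, not verified --- and it glosses over a real subtlety: with the asymptotic choice $\alpha_0^*=\sqrt{2}\,\epsilon^{1/4}$ the two endpoint values agree only to leading order, so even granting your unimodality claim the maximum could sit at $\omega\rightarrow\infty$ and exceed $|\rho_0(0,\alpha_0^*)|$ by an $\mathcal{O}(\epsilon^{1/2})$ amount; the stated inequality therefore only holds in the same asymptotic sense as the rest of the theorem, which is all the paper claims as well. In short: your derivation of the optimized parameter and of the asymptotic contraction rate is the paper's own argument; your attempt to close the equi-oscillation gap goes beyond the paper but is not yet a complete proof.
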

\begin{proof}
  For the non-overlapping case, the solution of the min-max problem
  \eqref{min_max_reduced} is given by equi-oscillation for 
  $\omega=0$ and $\omega\rightarrow\infty$, see the left plot of
  Fig. \ref{fig_1_equioscillation}. Lemma \ref{poly_rho0} provides us
  the expressions for $|\rho_{0}(0,\alpha)|$ and
  $|\rho_{0}(\infty,\alpha)|$ which are equal for $\alpha^{*}_{0}$.
   Comparing the powers of the dominating terms of
  these expressions results in $\frac{1}{2}-\delta=\delta$ which
  implies $\delta=\frac{1}{4}$. Now, equating the coefficients of
  these dominating terms gives $C_{\alpha}=\sqrt{2}$ and this completes
  the proof.
\end{proof}

  The analysis is different for the overlapping case $n>0$. Numerically,
  we observe that the solution of the min-max problem
  \eqref{min_max_reduced} is also given by equi-oscillation, see the
  right plot of Fig. \ref{fig_1_equioscillation}. But in this case,
  equi-oscillation for $|\rho_{n}(\omega,\alpha)|$ occurs for $\omega=0$
  and $\omega=\tilde{\omega}$, where $\tilde{\omega}\rightarrow 0$ as
  $\epsilon\rightarrow 0$. The dependence of the optimized
   $\alpha^{*}$ and $\tilde{\omega}$ on $\epsilon$ for 
   $n=1$ can be seen in the right plot of Fig. \ref{fig_2_alpha_epsilon} 
   and in Fig. \ref{fig_3_omega_epsilon}.
\begin{figure}
\centering
\includegraphics[scale=0.5]{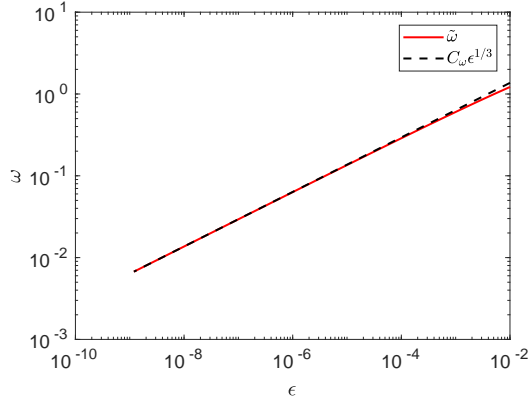}
\caption{Dependence of $\tilde{\omega}$ on $\epsilon$ with $C_{\omega}=\frac{2 a}{n}\left(\frac{1}{n}\right)^{1/3}$.}
\label{fig_3_omega_epsilon}
\end{figure}
  We therefore use the ansatz $\tilde{\omega}:= C_{\omega}  \epsilon^{\eta}$, for some $\eta>0$. Solving the problem
  \eqref{min_max_reduced} is equivalent to solving the system of
  equations
\begin{equation}\label{equi_n}
|\rho_{n}(0,\alpha^{*}_{n})|=|\rho_{n}(\tilde{\omega},\alpha^{*}_{n})| \quad \mbox{and} \quad \frac{\partial}{\partial \omega}|\rho(\tilde{\omega},\alpha^{*}_{n})|=0,
\end{equation}
  where $\alpha^{*}_{n}$ is the optimized $\alpha$ for an overlap of
  size $n$. We first solve the equation $\frac{\partial}{\partial
  \omega}|\rho(\tilde{\omega},\alpha^{*}_{n})|=0$ and find a relation
  between $\tilde{\omega}$ and $\alpha^{*}_{n}$. Substituting this
  relation into the first equation of \eqref{equi_n} then gives an
  explicit expression for $\alpha^{*}_{n}$.

\begin{lemma}\label{deriv}
  For the overlapping case, $n>0$, solving $\frac{\partial}{\partial
  \omega}|\rho(\tilde{\omega},\alpha^{*}_{n})|=0$ gives us the
  relation
\begin{equation}\label{relat}
  \eta=\delta \quad \mbox{and} \quad C_{\omega}=\frac{2 a}{n} C_{\alpha},
\end{equation}
  where $\tilde{\omega}=C_{\omega}\epsilon^{\eta}$ and
  $\alpha^{*}_{n}=C_{\alpha}\epsilon^{\delta}$.
\end{lemma}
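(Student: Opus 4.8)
The plan is to reduce $|\rho_n(\omega,\alpha)|:=|\rho_n(\omega,\alpha,-\alpha)|$ to an explicit elementary function of $\omega$, $\alpha$ and $\epsilon$ in the regime that governs the stationary point, then impose $\partial_\omega|\rho_n|=0$ and read off the two scaling relations. The first step is to exploit the constraint $\beta=-\alpha$: substituting it into \eqref{cfowr} makes the second bracket equal to the first, so $\rho_n(s,\alpha,-\alpha)=\bigl(\frac{\alpha+1-\lambda_1}{(1+\alpha)\lambda_1-1}\bigr)^{2}\bigl(\frac{1}{\lambda_1^{2}}\bigr)^{n}$ and hence
\[
|\rho_n(\omega,\alpha)|=|\rho_0(\omega,\alpha)|\,|\lambda_1(\omega)|^{-2n},\qquad |\rho_0|=\frac{A}{B},
\]
with $A/B$ as in \eqref{rho0}; thus the whole influence of the overlap is confined to the scalar factor $(x^2+y^2)^{-n}$, where $\lambda_1=x+iy$.

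Next I would compute the asymptotics of $\lambda_1(\omega)=x+iy$ for small $\epsilon$ and small $\omega$. Using the factorization $(s-b)^2-4a^2=(i\omega+\epsilon a)(i\omega+(4+\epsilon)a)$ with $s=i\omega$, $b=-(2+\epsilon)a$, one gets $\lambda_1=1+\frac{i\omega}{2a}+\frac{1}{\sqrt{a}}\sqrt{i\omega+\epsilon a}\,\bigl(1+\mathcal{O}(\omega)+\mathcal{O}(\epsilon)\bigr)$. The regime that governs the stationary point $\tilde\omega$ is the large-argument branch of the square root, $\omega\gg\epsilon a$ (consistent a posteriori, since the conclusion $\eta=\delta$ will have $\delta<1$). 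In that regime, setting $\mu:=\sqrt{\omega/(2a)}$, one finds $x=1+\mu+\mathcal{O}(\mu^{2},\epsilon/\mu)$ and $y=\mu+\mathcal{O}(\mu^{2},\epsilon/\mu)$, so that $x^2+y^2=1+2\mu+2\mu^2+\cdots$.

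Substituting these expansions into the expressions for $A(\omega,\alpha)$ and $B(\omega,\alpha)$ and keeping only the dominant terms — using also $\mu\gg\alpha$, again consistent a posteriori because $\eta=\delta$ gives $\mu\sim\epsilon^{\delta/2}\gg\epsilon^{\delta}\sim\alpha$ — one obtains
\[
|\rho_0(\omega,\alpha)|=\frac{\alpha^2-2\alpha\mu+2\mu^2}{\alpha^2+2\alpha\mu+2\mu^2}+\cdots=1-\frac{2\alpha}{\mu}+\cdots,\qquad |\lambda_1(\omega)|^{-2n}=(1+2\mu+2\mu^2)^{-n}+\cdots=1-2n\mu+\cdots,
\]
so that $|\rho_n(\omega,\alpha)|=1-\frac{2\alpha}{\mu}-2n\mu+(\text{higher order})$. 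Since $\mu$ is strictly increasing in $\omega$, $\partial_\omega|\rho_n|=0$ is equivalent to $\partial_\mu|\rho_n|=0$, which to leading order is $\frac{2\alpha}{\mu^2}-2n=0$, i.e. $\tilde\mu^2=\alpha^{*}_{n}/n$, equivalently $\tilde\omega/(2a)=\alpha^{*}_{n}/n$. Inserting $\alpha^{*}_{n}=C_{\alpha}\epsilon^{\delta}$ and $\tilde\omega=C_{\omega}\epsilon^{\eta}$ gives $C_{\omega}\epsilon^{\eta}=\frac{2a}{n}C_{\alpha}\epsilon^{\delta}$, which forces $\eta=\delta$ and $C_{\omega}=\frac{2a}{n}C_{\alpha}$, as claimed. (One can reach the same balance through logarithmic derivatives, using $\partial_\mu\ln A-\partial_\mu\ln B=\frac{4\alpha(2\mu^2-\alpha^2)}{\alpha^4+4\mu^4}$ and $n\,\partial_\mu\ln(x^2+y^2)=\frac{2n(1+2\mu)}{1+2\mu+2\mu^2}$, which reduce to $\frac{2\alpha}{\mu^2}=2n$ when $\mu\gg\alpha$.)

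The main obstacle is the asymptotic bookkeeping. One must verify that each term discarded in the expansions of $x$ and $y$ — the $\mathcal{O}(\epsilon/\mu)$ pieces, the $\mathcal{O}(\mu^2)$ pieces, and the prefactor $1+\frac{i\omega}{8a}+\frac{\epsilon}{8}$ coming from $\sqrt{i\omega+(4+\epsilon)a}$ — contributes to $\partial_\mu|\rho_n|$ at an order strictly below that of the two quantities $\frac{2\alpha}{\mu^2}$ and $2n$ being balanced, which are both of order $1$ at the stationary point; and one must check that the two working hypotheses $\omega\gg\epsilon a$ and $\mu\gg\alpha$ are compatible with the resulting $\eta=\delta$, which requires $\delta<1$ and is granted once $\delta$ is pinned down from the equi-oscillation condition in the next step. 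Everything else is a routine, if somewhat lengthy, Taylor expansion.
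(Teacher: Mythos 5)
Your proposal is correct and follows essentially the same route as the paper: the same decomposition $|\rho_n|=|\rho_0|\cdot|\lambda_1|^{-2n}$, the same small-$\omega$ expansion of $\lambda_1=x+iy$, and the same leading-order balance in the stationarity condition, which in your variable $\mu=\sqrt{\omega/(2a)}$ reads $\frac{2\alpha}{\mu^2}=2n$ and reproduces exactly the paper's $\tilde\omega=\frac{2a}{n}C_\alpha\epsilon^\delta$. The substitution $\mu$ and the direct balancing of $-2\alpha/\mu$ against $-2n\mu$ is a mild streamlining of the paper's explicit computation of $F_1-F_2$, not a different argument.
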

\begin{proof}
  We recall the expression for the convergence factor \eqref{cfowr}
   for the OWR algorithm with $n$ overlap,
  \begin{equation*}
|\rho_{n}(\omega,\alpha)|=\left|\left(\frac{\alpha+1-\lambda_1}{(1+\alpha)\lambda_{1}-1}\right)^2 \left(\frac{1}{\lambda_{1}^2}\right)^n\right| = \left|\rho_{0}(\omega,\alpha)\right|\left|\left(\frac{1}{\lambda_{1}^2}\right)^n\right| 
 =\frac{A(\omega,\alpha)}{B(\omega,\alpha)} D(\omega),
\end{equation*}
where $\lambda_1=x+iy$ and
\begin{eqnarray*}
A(\omega,\alpha)&=&|\alpha+1-\lambda_{1}|^{2}=\alpha^2 + x^2 + 1 + y^2 - 2x - 2x \alpha + 2\alpha,\\
B(\omega,\alpha)&=&|(1+\alpha)\lambda_{1}-1|^2= y^2 + 1 - 2x + 2\alpha y^2 - 2x \alpha + 2\alpha x^2 + x^2 + y^2\alpha^2 + x^2\alpha^2, \\
D(\omega)&=& \left|\left(\frac{1}{\lambda_{1}^2}\right)^n\right| = \left(\frac{1}{x^2+y^2}\right)^n. 
\end{eqnarray*}
  We express $A(\omega,\alpha)$, $B(\omega,\alpha)$ and $D(\omega)$ 
  as polynomials in $\omega$ and then find their derivative with
  respect to $\omega$. In the proof of Lemma \ref{sym}, we defined
  $x=\frac{(2+\epsilon)a+z_{1}}{2a}$ and $y=\frac{\omega+z_2}{2a}$,
  where $z_1+i z_2=\sqrt{r+i p}$ and hence
\begin{equation*}
  z^{2}_{1}-z^{2}_2=r \quad \mbox{and}\quad 2 z_1 z_2=p.
\end{equation*}
  Solving these two equations results in solving
  $z_{2}=\frac{p}{2 z_{1}}$ and $4 z^{4}_{1}-4 z^{2}_{1} r-p^2=0$,
  which gives $z^{2}_{1}=\frac{r+\sqrt{r^2+p^2}}{2}$. Let
  $rs:=\sqrt{r^2+p^2}$, where $r:=\epsilon^2 a^2 - \omega^2 +  
  4\epsilon a^2$ and   $p:=2(2+\epsilon)\omega a$. A Taylor expansion for $(1+y)^{1/2}$,  $y<1$, leads for $rs$ to
\begin{eqnarray*}
rs&=&\sqrt{r^2+p^2}=\sqrt{(-\omega^2+4 a^2 \epsilon+ a^2 \epsilon^2)^2+4 \omega^2 a^2 (2+\epsilon)^2}  \\
&=&\sqrt{16 a^2 \omega^2 +\omega^4  +8 \omega^2 a^2 \epsilon+ 16 \epsilon^2 a^4+ \dots}  \\
&=& 4 \omega a \sqrt{1+\left(\frac{\omega^2}{16 a^2}+\frac{\epsilon}{2}+\frac{a^2 \epsilon^2}{\omega^2}+\dots\right)}  \\
&=&4 \omega a +\frac{\omega^3}{8 a}+\omega a \epsilon+\frac{2 a^3 \epsilon^2}{\omega}+ \mathcal{O}(\omega^5). 
\end{eqnarray*}
   Similarly, we obtain expressions for $z_1$ and $z_2$,
\begin{eqnarray*}
z_{1}&=&\sqrt{\frac{r+rs}{2}} \\
&=& \sqrt{2 a \omega} \sqrt{1-\frac{\omega}{4 a}+\frac{\omega^2}{32 a^2}+\frac{a \epsilon}{\omega}+\frac{\epsilon}{4}+\frac{a \epsilon^2}{4 \omega}+ \dots} \\
&=& \sqrt{2 a \omega}-\frac{\sqrt{2} \omega^{3/2}}{8 \sqrt{a}}+\frac{\sqrt{2} \omega^{5/2}}{128 a^{3/2}}+\frac{\sqrt{2} a^{3/2} \epsilon}{2 \sqrt{\omega}}+ \mathcal{O}(w^{1/2}\epsilon),
\end{eqnarray*}
 and $z_2=\frac{p}{2 z_{1}}=\sqrt{2 a \omega}+\frac{\sqrt{2}
  \omega^{3/2}}{8 \sqrt{a}}-\frac{\sqrt{2} a^{3/2} \epsilon}{2
   {\sqrt{\omega}}}+\mathcal{O}(w^{1/2}\epsilon)$.
   Substituting these into the expressions for $x$ and $y$ leads to
\begin{eqnarray*}
x&=&1+\frac{\sqrt{2} \sqrt{\omega}}{2 \sqrt{a}}-\frac{\sqrt{2} \omega^{3/2}}{16 a^{3/2}}+\mathcal{O}(\omega^{5/2}), \\
y&=&\frac{\sqrt{2} \sqrt{\omega}}{2 \sqrt{a}}+\frac{\omega}{2 a}+\frac{\sqrt{2} \omega^{3/2}}{16 a^{3/2}}+\mathcal{O}(\omega^{5/2}). 
\end{eqnarray*}
   Collecting the dominating terms gives us asymptotic expressions
   for $A(\omega,\alpha)$ and $B(\omega,\alpha)$,
\begin{eqnarray*}
A(\omega,\alpha)&=&\frac{\omega}{a}-\frac{\sqrt{2} C_{\alpha} \epsilon^{\delta}\sqrt{\omega}}{\sqrt{a}}+\frac{\sqrt{2} \omega^{3/2}}{2 a^{3/2}}+ \mathcal{O}(\omega^{2}), \\
B(\omega,\alpha)&=&\frac{\omega}{a}+\frac{\sqrt{2} C_{\alpha} \epsilon^{\delta}\sqrt{\omega}}{\sqrt{a}}+\frac{\sqrt{2} \omega^{3/2}}{2 a^{3/2}}+ \mathcal{O}(\omega^{2}).
\end{eqnarray*}
    To find an asymptotic expression for $D(\omega)$, we use the Taylor
    expansion $\left(\frac{1}{1+y}\right)^n=1-ny+\frac{n^2 y^2}{2}+
   \mathcal{O}(y^3)$ and obtain
\begin{eqnarray}
D(\omega)&=&\left(\frac{1}{x^2+y^2}\right)^n 
=\left(\frac{1}{1+\frac{\sqrt{2} \sqrt{\omega}}{\sqrt{a}}+\frac{\omega}{a}+\frac{3 \sqrt{2} \omega^{3/2}}{8 a^{3/2}}+\mathcal{O}(\omega^{2})}\right)^n  \nonumber \\
&=& 1-\frac{n \sqrt{2} \sqrt{\omega}}{\sqrt{a}}-\frac{n \omega}{a}+\frac{n^2 \omega}{a}+\mathcal{O}(\omega^{3/2}). \label{asyms_D}
\end{eqnarray}
  Further, differentiating $|\rho_{n}(\omega,\alpha)|=\frac{A
  (\omega,\alpha)}{B(\omega,\alpha)} D(\omega)$ with respect
  to $\omega$ produces 
  \[\frac{\partial}{\partial \omega}|\rho_{n}(\omega,\alpha)|=\frac{
B(\omega,\alpha)[A(\omega,\alpha) D_{\omega}(\omega)+A_{\omega}(\omega,\alpha)D(\omega)]-A(\omega,\alpha)D(\omega) B_{\omega}(\omega,\alpha)}{B^2(\omega,\alpha)}.\] 
 Let $F_1(\omega,\alpha):=B(\omega,\alpha)[A(\omega,\alpha)
   D_{\omega}(\omega)+A_{\omega}(\omega,\alpha)D(\omega)]$
   and $F_2(\omega,\alpha):=A(\omega,\alpha)D(\omega) 
   B_{\omega}(\omega,\alpha)$, where $A_{\omega}(\omega,\alpha)$
   denotes the partial derivative of $A(\omega,\alpha)$ with respect
   to $\omega$. Collecting the dominating terms in the asymptotic
   expansions of $F_{1}$, $F_{2}$ results in
\begin{eqnarray*}
F_{1}(\omega,\alpha)&=&\frac{\sqrt{2} C_{\alpha} \epsilon^{\delta} \sqrt{\omega}}{2 a^{3/2}}+\frac{\omega}{a^2}+\frac{\sqrt{2} \omega^{3/2}}{2 a^{5/2}}\left(\frac{5}{2}-3n\right)+\mathcal{O}(\omega^2), \\
F_{2}(\omega,\alpha)&=&-\frac{\sqrt{2} C_{\alpha} \epsilon^{\delta} \sqrt{\omega}}{2 a^{3/2}}+\frac{\omega}{a^2}+\frac{\sqrt{2} \omega^{3/2}}{a^{5/2}}\left(\frac{5}{4}-n\right)+\mathcal{O}(\omega^2).
\end{eqnarray*}
   Equating $F_1(\tilde{\omega},\alpha)-F_{2}(\tilde{\omega},\alpha)=0$,
   we obtain $\tilde{\omega}=\frac{2 a C_{\alpha}}{n}
   \epsilon^{\delta}$. Since we use the ansatz $\tilde{\omega}=
   C_{\omega} \epsilon^{\eta}$, comparing the exponents and coefficients
   simplifies to \eqref{relat} and this completes the proof.
\end{proof}

  Now we need to solve the first equation of \eqref{equi_n},
  $|\rho_{n}(0,\alpha^{*}_{n})|=|\rho_{n}(\tilde{\omega},\alpha^{*}_{n})|$.  
  We do this in a similar way as for the non-overlapping case, $n=0$.
  We express $|\rho_{n}(0,\alpha^{*}_{n})|$ and $|\rho_{n}\tilde{\omega},
  \alpha^{*}_{n})|$ as asymptotic expansions in $\epsilon$.

\begin{lemma}\label{poly_n}
  For the overlapping case, $n>0$, the modulus of the convergence
  factor $|\rho_{n}(\omega,\alpha)|$ for the OWR algorithm for
  $\omega=0$ and $\omega=\tilde{\omega}$ is for small $\epsilon$ given
  by
\begin{equation}\label{rhon_0}
|\rho_{n}(0,\alpha)|=1-\frac{4}{C_{\alpha}} \epsilon^{\frac{1}{2}-\delta}+\mathcal{O}(\epsilon^{1-2\delta}),
\end{equation}
and 
\begin{equation}\label{rhon_tilde}
|\rho_{n}(\tilde{\omega},\alpha)|=1-\frac{2\sqrt{2} C_{\alpha} \sqrt{a} \epsilon^{\delta/2}}{\sqrt{C_{\omega}}}-\frac{n \sqrt{2} \sqrt{C_{\omega}} \epsilon^{\delta/2}}{\sqrt{a}}+\mathcal{O}(\epsilon^{\delta}).
\end{equation}
\end{lemma}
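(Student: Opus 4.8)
The plan is to split the computation according to the two frequencies at which the equi-oscillation occurs, $\omega=0$ and $\omega=\tilde\omega$, and to recycle as much as possible from the proofs of Lemma~\ref{poly_rho0} and Lemma~\ref{deriv}. Throughout I would write $|\rho_{n}(\omega,\alpha)|=\frac{A(\omega,\alpha)}{B(\omega,\alpha)}D(\omega)$ with $A$, $B$, $D$ exactly as in the proof of Lemma~\ref{deriv}, use the ansatz $\alpha=C_{\alpha}\epsilon^{\delta}$, and use the relations $\eta=\delta$ and $\tilde\omega=C_{\omega}\epsilon^{\delta}$ already furnished by Lemma~\ref{deriv}.

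For $\omega=0$ I would observe that $y=0$ and $x=\lambda_{1}(0)=1+\epsilon^{1/2}+\tfrac{\epsilon}{2}+\mathcal{O}(\epsilon^{3/2})$, which is precisely the expansion established in the proof of Lemma~\ref{poly_rho0}. Since neither $A$ nor $B$ involves $n$, the quotient $A(0,\alpha)/B(0,\alpha)$ is exactly $|\rho_{0}(0,\alpha)|$, whose expansion is \eqref{rho0_0}. The only genuinely new ingredient is the overlap factor $D(0)=\lambda_{1}(0)^{-2n}=1-2n\,\epsilon^{1/2}+\mathcal{O}(\epsilon)$. Multiplying the two expansions and using that the optimal exponent satisfies $0<\delta<\tfrac12$, the $\epsilon^{1/2-\delta}$ correction coming from $|\rho_{0}(0,\alpha)|$ dominates the $\epsilon^{1/2}$ correction carried by $D(0)$, and every cross term is of order $\mathcal{O}(\epsilon^{1-2\delta})$ or smaller; this produces \eqref{rhon_0}, so the overlap does not affect the leading behaviour at $\omega=0$.

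For $\omega=\tilde\omega$ I would start from the $\omega$-expansions of $A(\omega,\alpha)$, $B(\omega,\alpha)$ and $D(\omega)$ obtained in the proof of Lemma~\ref{deriv}, factor the common leading term $\omega/a$ out of $A$ and $B$, and substitute $\omega=\tilde\omega=C_{\omega}\epsilon^{\delta}$. Setting $u:=\frac{\sqrt2\,C_{\alpha}\sqrt a}{\sqrt{C_{\omega}}}\epsilon^{\delta/2}$ and $v:=\frac{\sqrt2\,\sqrt{C_{\omega}}}{2\sqrt a}\epsilon^{\delta/2}$, both of order $\epsilon^{\delta/2}$, one gets
\[
\frac{A(\tilde\omega,\alpha)}{B(\tilde\omega,\alpha)}=\frac{1-u+v+\mathcal{O}(\epsilon^{\delta})}{1+u+v+\mathcal{O}(\epsilon^{\delta})}=1-2u+\mathcal{O}(\epsilon^{\delta}),
\]
the symmetric term $v$ cancelling. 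Multiplying by $D(\tilde\omega)=1-\frac{n\sqrt2\,\sqrt{C_{\omega}}}{\sqrt a}\epsilon^{\delta/2}+\mathcal{O}(\epsilon^{\delta})$ and collecting the $\epsilon^{\delta/2}$ terms yields \eqref{rhon_tilde}.

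The proof is conceptually routine; the real work is the asymptotic bookkeeping, and the delicate point is the case $\omega=\tilde\omega$. Once $\omega$ is itself a power of $\epsilon$, the two expansion scales $\sqrt\omega$ and $\epsilon$ of Lemma~\ref{deriv} collapse onto the single scale $\epsilon^{\delta/2}$, so one must keep every contribution of order $\epsilon^{\delta/2}$ while making sure that the $\alpha^{2}$-terms, the $\omega^{3/2}$-terms, and the $\mathcal{O}(\omega^{2})$ tails are all correctly relegated to $\mathcal{O}(\epsilon^{\delta})$ \emph{before} forming the quotient $A/B$ and multiplying by $D$; a misplaced term there would corrupt the coefficient structure. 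One also has to check that the assumption $0<\delta<\tfrac12$ used in the $\omega=0$ expansion is consistent with the value of $\delta$ that will be forced by equi-oscillation in the theorem that follows.
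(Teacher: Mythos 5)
Your proposal is correct and follows essentially the same route as the paper: reuse the $\omega$-expansions of $A(\omega,\alpha)$, $B(\omega,\alpha)$ and $D(\omega)$ from the proof of Lemma~\ref{deriv}, reduce the $\omega=0$ case to Lemma~\ref{poly_rho0} via the factorization $|\rho_{n}|=|\rho_{0}|\,D$, and for $\omega=\tilde{\omega}$ substitute $\tilde{\omega}=C_{\omega}\epsilon^{\delta}$, factor the common leading term out of $A$ and $B$, form the quotient (in which the symmetric $\omega^{3/2}$ contribution cancels) and multiply by $D(\tilde{\omega})$. The only cosmetic difference is that you retain $D(0)=1-2n\epsilon^{1/2}+\mathcal{O}(\epsilon)$ and argue that this correction is absorbed into the remainder, whereas the paper simply reads off $D(0)=1$ from \eqref{asyms_D}; both yield \eqref{rhon_0}.
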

\begin{proof}
  From the polynomial expansion of $D(\omega)$ given in \eqref{asyms_D},
  we obtain $D(0)=\left|\left(\frac{1}{\lambda^{2}_1(0)}\right)^n
  \right|=1$. Substituting this into the formula for the convergence 
  factor for OWR with $n$ overlap leads to
  $\left|\rho_{n}(0,\alpha)\right|=\left|\rho_{0}(0,\alpha)\right|
  \left|\left(\frac{1}{\lambda^{2}_1(0)}\right)^n\right|
  =\left|\rho_{0}(0,\alpha)\right|$ and we arrive at \eqref{rhon_0}.

  The analysis to find the expression for $|\rho_{n}(\tilde{\omega},
  \alpha)|$ is similar. Substituting $\tilde{\omega}=C_{\omega}
  \epsilon^{\delta}$ in to the  expressions for $A(\omega,\alpha)$ 
  and  $B(\omega,\alpha)$ leads to 
  \begin{eqnarray*}
A(\tilde{\omega},\alpha)&=&\frac{\tilde{\omega}}{a}-\frac{\sqrt{2} C_{\alpha} \epsilon^{\delta}\sqrt{\tilde{\omega}}}{\sqrt{a}}+\frac{\sqrt{2} \tilde{\omega}^{3/2}}{2 a^{3/2}}+ \mathcal{O}(\tilde{\omega}^2) 
= \frac{C_{\omega} \epsilon^{\delta}}{a} -\frac{\sqrt{2} C_{\alpha} \sqrt{C_{\omega}} \epsilon^{3\delta/2}}{\sqrt{a}}+\frac{C_{\omega}^{3/2} \epsilon^{3\delta/2}}{\sqrt{2} a^{3/2}} +\mathcal{O}(\epsilon^{2 \delta})  \\
&=& \frac{C_{\omega} \epsilon^{\delta}}{a}\left(1-\frac{\sqrt{2} C_{\alpha} \sqrt{a} \epsilon^{\delta/2}}{\sqrt{C_{\omega}}} +\frac{\sqrt{C_{\omega}} \epsilon^{\delta/2}}{\sqrt{2} \sqrt{a}}+\mathcal{O}(\epsilon^\delta)\right),     \\
B(\tilde{\omega},\alpha)&=&\frac{\tilde{\omega}}{a}+\frac{\sqrt{2} C_{\alpha} \epsilon^{\delta}\sqrt{\tilde{\omega}}}{\sqrt{a}}+\frac{\sqrt{2} \tilde{\omega}^{3/2}}{2 a^{3/2}}+ \mathcal{O}(\tilde{\omega}^2)  
= \frac{C_{\omega} \epsilon^{\delta}}{a} +\frac{\sqrt{2} C_{\alpha} \sqrt{C_{\omega}} \epsilon^{3\delta/2}}{\sqrt{a}}+\frac{C_{\omega}^{3/2} \epsilon^{3\delta/2}}{\sqrt{2} a^{3/2}} +\mathcal{O}(\epsilon^{5 \delta/2})  \\
&=& \frac{C_{\omega} \epsilon^{\delta}}{a}\left(1+\frac{\sqrt{2} C_{\alpha} \sqrt{a} \epsilon^{\delta/2}}{\sqrt{C_{\omega}}} +\frac{\sqrt{C_{\omega}} \epsilon^{\delta/2}}{\sqrt{2} \sqrt{a}}+\mathcal{O}(\epsilon^{\delta})\right).          
\end{eqnarray*}
  Dividing $A(\tilde{\omega},\alpha)$ by $B(\tilde{\omega},\alpha)$ 
  and using the Taylor expansion $\frac{1}{1+z}=1-z+\frac{z^2}{2}
  +\mathcal{O}(z^3) $ gives us 
\begin{eqnarray*}
\frac{A(\tilde{\omega},\alpha)}{B(\tilde{\omega},\alpha)}&=&\left(1-\frac{\sqrt{2} C_{\alpha} \sqrt{a} \epsilon^{\delta/2}}{\sqrt{C_{\omega}}} +\frac{\sqrt{C_{\omega}} \epsilon^{\delta/2}}{\sqrt{2} \sqrt{a}}+ \dots\right)\left(1-\frac{\sqrt{2} C_{\alpha} \sqrt{a} \epsilon^{\delta/2}}{\sqrt{C_{\omega}}} -\frac{\sqrt{C_{\omega}} \epsilon^{\delta/2}}{\sqrt{2} \sqrt{a}}+ \dots\right) \\
&=& 1-\frac{2\sqrt{2} C_{\alpha} \sqrt{a} \epsilon^{\delta/2}}{\sqrt{C_{\omega}}}-\frac{C_{\omega} \epsilon^{\delta}}{2 a}+\frac{2C^{2}_{\alpha}a\epsilon^{\delta}}{C_\omega}+\mathcal{O}(\epsilon^{3 \delta/2}).
\end{eqnarray*}
  Similarly, we obtain the expression for $D(\tilde{\omega})$,
\[D(\tilde{\omega})=1-\frac{n \sqrt{2} \sqrt{C_{\omega}} \epsilon^{\delta/2}}{\sqrt{a}}-\frac{n C_{\omega} \epsilon^{\delta}}{a}+\frac{n^2 C_{\omega} \epsilon^{\delta}}{a}+\mathcal{O}(\epsilon^{3 \delta/2}).\]
  Multiplying the above expression by the expansion for 
  $\frac{A(\tilde{\omega})}{B(\tilde{\omega})}$, we find,
\begin{eqnarray*}
|\rho_{n}(\tilde{\omega},\alpha)|&=& \left(1-\frac{2\sqrt{2} C_{\alpha} \sqrt{a} \epsilon^{\delta/2}}{\sqrt{C_{\omega}}}-\frac{C_{\omega} \epsilon^{\delta}}{2 a}+\frac{2C^{2}_{\alpha}a\epsilon^{\delta}}{C_\omega}+\mathcal{O}(\epsilon^{3 \delta/2})\right)\left(1-\frac{n \sqrt{2} \sqrt{C_{\omega}} \epsilon^{\delta/2}}{\sqrt{a}}-\frac{n  C_{\omega} \epsilon^{\delta}}{a}+\mathcal{O}(\epsilon^{3 \delta/2})\right) \\
&=& 1-\frac{2\sqrt{2} C_{\alpha} \sqrt{a} \epsilon^{\delta/2}}{\sqrt{C_{\omega}}}-\frac{n \sqrt{2} \sqrt{C_{\omega}} \epsilon^{\delta/2}}{\sqrt{a}}+\mathcal{O}(\epsilon^{\delta}),
\end{eqnarray*}
which completes the proof.
\end{proof}

  We are now ready to prove a remarkably simple formula for the
  optimized parameter depending on the overlap $n>0$ which is
  quite different from the non-overlapping case:
\begin{theorem}\label{opt_alpha_n}
For the OWR algorithm in the overlapping case, $n>0$,
 and for small $\epsilon>0$, if  $\alpha^{*}_{n}= \left(\frac{\epsilon}{n}\right)^{1/3}$, then the convergence factor $\rho_{n}$ satisfies
\begin{equation}
|\rho_{n}(\omega,\alpha)|\le |\rho_{n}(0,\alpha^{*}_{n})|\sim 1-4 n^{1/3} \epsilon^{1/6} + \mathcal{O}(\epsilon^{1/3}).
\end{equation}
\end{theorem}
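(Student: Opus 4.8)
The plan is to follow the same two–step recipe that produced Theorem~\ref{opt_alpha_n_0} in the non-overlapping case, but now exploiting Lemma~\ref{deriv} to eliminate the free constant $C_{\omega}$ before imposing equi-oscillation. First I would take the expansion \eqref{rhon_tilde} for $|\rho_{n}(\tilde{\omega},\alpha)|$ from Lemma~\ref{poly_n} and substitute the relation $C_{\omega}=\tfrac{2a}{n}C_{\alpha}$ from \eqref{relat}. A short simplification shows that the two $\epsilon^{\delta/2}$ contributions coincide, so that $|\rho_{n}(\tilde{\omega},\alpha)|=1-4\sqrt{n}\,\sqrt{C_{\alpha}}\,\epsilon^{\delta/2}+\mathcal{O}(\epsilon^{\delta})$, while Lemma~\ref{poly_n} already gives $|\rho_{n}(0,\alpha)|=1-\tfrac{4}{C_{\alpha}}\epsilon^{1/2-\delta}+\mathcal{O}(\epsilon^{1-2\delta})$.

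Next I would impose the first equation of the equi-oscillation system \eqref{equi_n}, namely $|\rho_{n}(0,\alpha^{*}_{n})|=|\rho_{n}(\tilde{\omega},\alpha^{*}_{n})|$. Matching the exponents of the dominant correction terms, $\tfrac12-\delta=\tfrac{\delta}{2}$, forces $\delta=\tfrac13$; matching their coefficients, $\tfrac{4}{C_{\alpha}}=4\sqrt{n}\,\sqrt{C_{\alpha}}$, forces $C_{\alpha}^{3/2}=n^{-1/2}$, i.e. $C_{\alpha}=n^{-1/3}$. Hence $\alpha^{*}_{n}=C_{\alpha}\epsilon^{\delta}=\left(\epsilon/n\right)^{1/3}$, and feeding $\delta=\tfrac13$, $C_{\alpha}=n^{-1/3}$ back into the $\omega=0$ expansion gives $|\rho_{n}(0,\alpha^{*}_{n})|=1-4n^{1/3}\epsilon^{1/6}+\mathcal{O}(\epsilon^{1/3})$. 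It then remains to argue that this value actually bounds $|\rho_{n}(\omega,\alpha^{*}_{n})|$ for every admissible $\omega$: by Lemma~\ref{max_prin} and Lemma~\ref{sym} the maximum of $|\rho_{n}|$ over the right half-plane is attained on $\omega\in[0,\omega_{max}]$, and the equi-oscillation structure used to set up \eqref{equi_n} (the two local maxima, at $\omega=0$ and $\omega=\tilde\omega$, are made equal by the optimal choice) yields the stated inequality.

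The main obstacle is not the algebra — which is entirely analogous to the $n=0$ case — but the justification that \eqref{equi_n} really characterises the solution of the reduced min-max problem \eqref{min_max_reduced}: that the supremum over $\omega$ is attained precisely at $\omega=0$ and $\omega=\tilde{\omega}$ and nowhere else, which in this excerpt is supported numerically (Fig.~\ref{fig_1_equioscillation}) rather than proved, so that at the asymptotic level all one can do is verify the consistency of the ansatz. A secondary point requiring care is bookkeeping of the neglected $\mathcal{O}$-terms: once $\delta=\tfrac13$ is inserted one must check that $\mathcal{O}(\epsilon^{1-2\delta})=\mathcal{O}(\epsilon^{1/3})$ and $\mathcal{O}(\epsilon^{\delta})=\mathcal{O}(\epsilon^{1/3})$ are genuinely subdominant to the $\epsilon^{1/6}$ correction, and that the ansatz $\tilde{\omega}=C_{\omega}\epsilon^{\eta}$ with $\eta=\delta=\tfrac13$ indeed stays in the regime $\tilde{\omega}\to0$ where the small-$\omega$ expansions of Lemma~\ref{deriv} and Lemma~\ref{poly_n} remain valid.
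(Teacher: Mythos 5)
Your proposal is correct and follows essentially the same route as the paper: both combine the expansions of Lemma~\ref{poly_n} with the relation $C_{\omega}=\tfrac{2a}{n}C_{\alpha}$ from Lemma~\ref{deriv} and impose equi-oscillation to get $\delta=\tfrac13$ and $C_{\alpha}=n^{-1/3}$ (the paper merely substitutes \eqref{relat} after equating coefficients rather than before, and your simplification that both $\epsilon^{\delta/2}$ terms collapse to $2\sqrt{nC_{\alpha}}$ is a correct shortcut). Your caveat that the equi-oscillation characterisation is only supported numerically matches the paper's own admission.
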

\begin{proof}
  Since the solution of our min-max problem 
  \eqref{min_max_reduced} is obtained numerically by equi-oscillation
  for $\omega=0$ and $\omega=\tilde{\omega}$, equating the 
  expansions for $|\rho_{n}(0,\alpha)|$ and $|\rho_{n}(\tilde{\omega}
  ,\alpha)|$ given by \eqref{rhon_0} and \eqref{rhon_tilde} and 
  comparing their dominating terms, we obtain 
  \begin{equation*}
\frac{\delta}{2}=\frac{1}{2}-\delta \quad \mbox{and} \quad \frac{4}{C_{\alpha}}=\frac{2\sqrt{2} C_{\alpha} \sqrt{a}}{\sqrt{C_{\omega}}}
+\frac{n \sqrt{2} \sqrt{C_{\omega}}}{\sqrt{a}}.
\end{equation*}
  The first equation  implies that  $\delta=1/3$, and
  substituting the expression for $C_{\omega}$ given by 
  \eqref{relat} results in $C_{\alpha}=\left(\frac{1}{n}
  \right)^{1/3}$. This completes the proof of the theorem.
\end{proof}  
  
\section{Multiple Sub-circuits}\label{Sec:ManySubCircuits}

  We studied so far only the decomposition of the system of
  equations \eqref{ce1} into two sub-systems. However, for the
  practical use on parallel computers, we need to split the system
  into many sub-systems and apply the WR or OWR algorithm to them.
  We thus split the RC circuit of infinite length in
  Fig. \ref{RCinf} into $N_s$ sub-circuits which are denoted by
  $\textbf{v}_{r}$, $r=1,2,\ldots,N_s$. Assume that each
  sub-circuit $\textbf{v}_{r}$ contains $M_r$ nodes. Applying the
  Optimized Waveform Relaxation algorithm (with $\alpha$, $\beta$ as the
  optimization parameters) with $n$ nodes overlap leads to
\begin{equation}\label{multipl_owr}
\begin{array}{l}
\dot{\textbf{v}}_1^{k+1}(t) = \begin{bmatrix}
 			\ddots & \ddots  &\ddots &    \\
			       &  a      & b     & a  \\
			       &         & a     & b+\frac{a}{\alpha+1} 
		      \end{bmatrix}\textbf{v}_{1}^{k+1}(t)  +\begin{bmatrix}
                    \vdots \\
                    0 \\
                   a v_{2,n+1}^{k}(t)-\frac{a}{\alpha+1}v_{2,n}^{k}(t)
                    \end{bmatrix},\\                    
\dot{\textbf{v}}_r^{k+1}(t) = \begin{bmatrix}
			b-\frac{a}{\beta-1}      & a      &       &     \\
			a      & b      & a     &    \\
			       & \ddots & \ddots & \ddots  \\
			       &        &      a & b +\frac{a}{\alpha+1}    
		      \end{bmatrix} \textbf{v}_{r}^{k+1}(t)   +\begin{bmatrix}
                    a v_{r-1,M_r}^{k}(t)+\frac{a}{\beta-1}v_{r-1,M_r+1}^{k}(t)\\
                    0\\
                    \vdots \\
                    0\\
                    a v_{r+1,n+1}^{k}(t)-\frac{a}{\alpha+1}v_{r+1,n}^{k}(t)\end{bmatrix},            \\
\dot{\textbf{v}}_{N_s}^{k+1}(t) = \begin{bmatrix}
			b-\frac{a}{\beta-1}      & a     &       &    \\
			a      & b      & a     &  \\
			       & \ddots       & \ddots     & \ddots 
		      \end{bmatrix}\textbf{v}_{N_s}^{k+1}(t)   +\begin{bmatrix}
                     a v_{N_s-1,M_r}^{k}(t)+\frac{a}{\beta-1}v_{N_s-1,M_r+1}^{k}(t) \\
                    0\\
                    \vdots 
                    \end{bmatrix},                 
\end{array}
\end{equation}
  where $r=2,3,\dots,N_s-1 $ and we have considered the source term
 $\textbf{f}=0$ for simplicity. For $\alpha=\infty$, $\beta=-\infty$,
 we obtain the classical Waveform Relaxation algorithm.

  In the case of OWR for two sub-domains, we saw that with the
  optimal transmission conditions \eqref{opt_alpha}, convergence is
  achieved in 2 iterations and the parameters $\alpha$, $\beta$ represent
  non-local operators in time. In the case of $N_s$ sub-circuits,
  with the use of optimal transmission conditions, one can show
  that convergence can be achieved in $N_s$ iterations
  \cite{RC_small_GA}, and this result still holds in the overlapping
  case. Again these parameters $\alpha$, $\beta$ are non-local
   operators in time and the analysis becomes very
   complicated. Hence we will use $\alpha^{*}_{0}$ and
  $\alpha^{*}_{n}$ given 
 in Theorem \ref{opt_alpha_n_0} and Theorem \ref{opt_alpha_n} 
    from our two sub-circuit analysis and test their
   performance numerically for the multiple sub-circuit case.

\section{Numerical Results}\label{Sec:Numerical}

  We consider an RC circuit with $R=0.5 k\si{\ohm}$, $C=0.63  pF$,
  $a=\frac{1}{RC}$ and $b=-(2+a)\epsilon$ with $\epsilon>0$,
  $\epsilon\rightarrow 0$. Our analysis for both WR and OWR was
  performed assuming that the length of the circuit is infinite. For
  our numerical experiments, we consider a circuit of length
  $N=200$, a homogeneous source, zero initial conditions, and apply
  the backward Euler scheme for time integration with $\Delta
  t=0.1$ and $T=2000$. We use random initial guesses to start both the
  WR and OWR algorithm. The left plot of Fig. \ref{fig_6_wr_vs_owr}
\begin{figure}
\includegraphics[width=0.49\textwidth,clip]{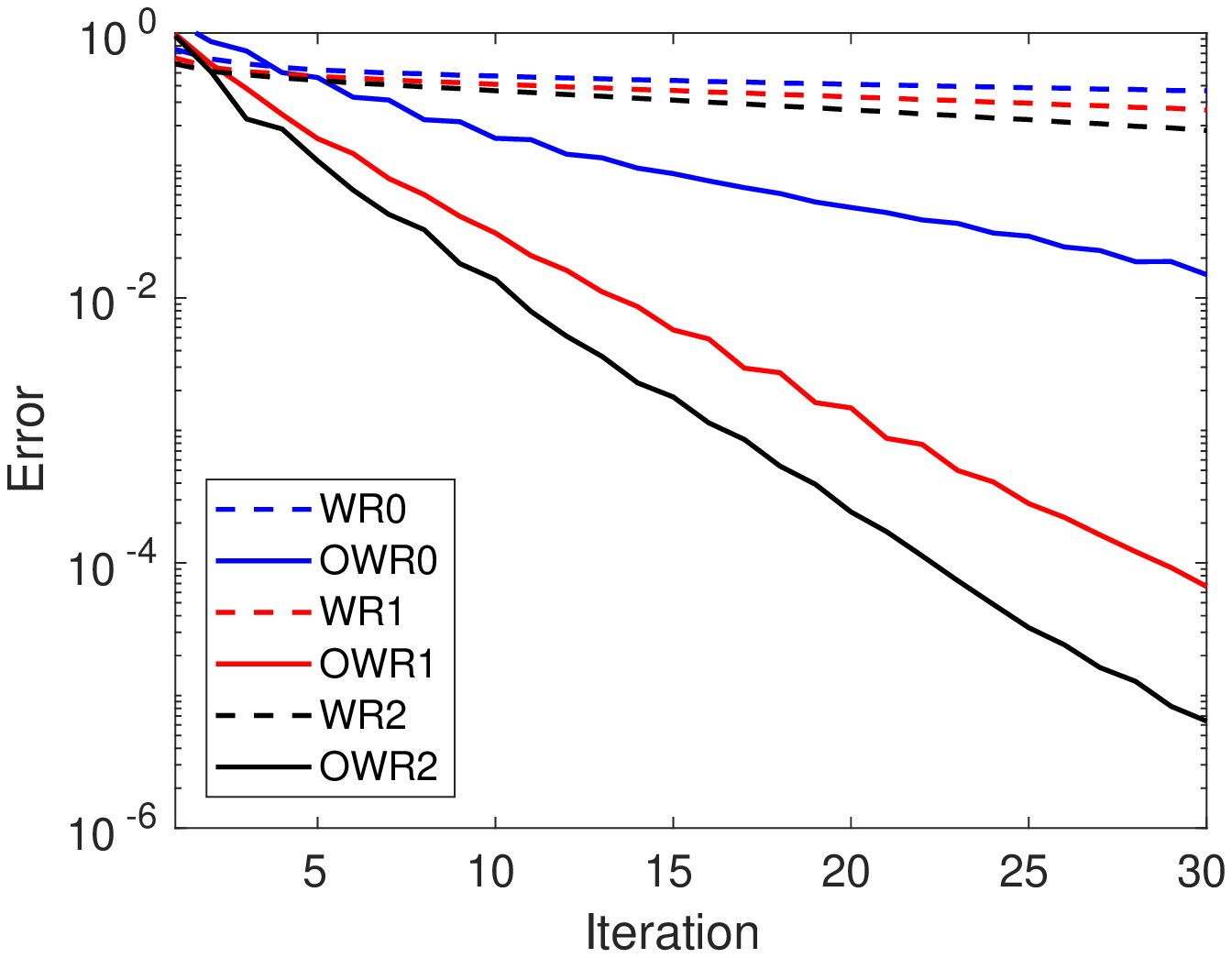}
\includegraphics[width=0.49\textwidth,clip]{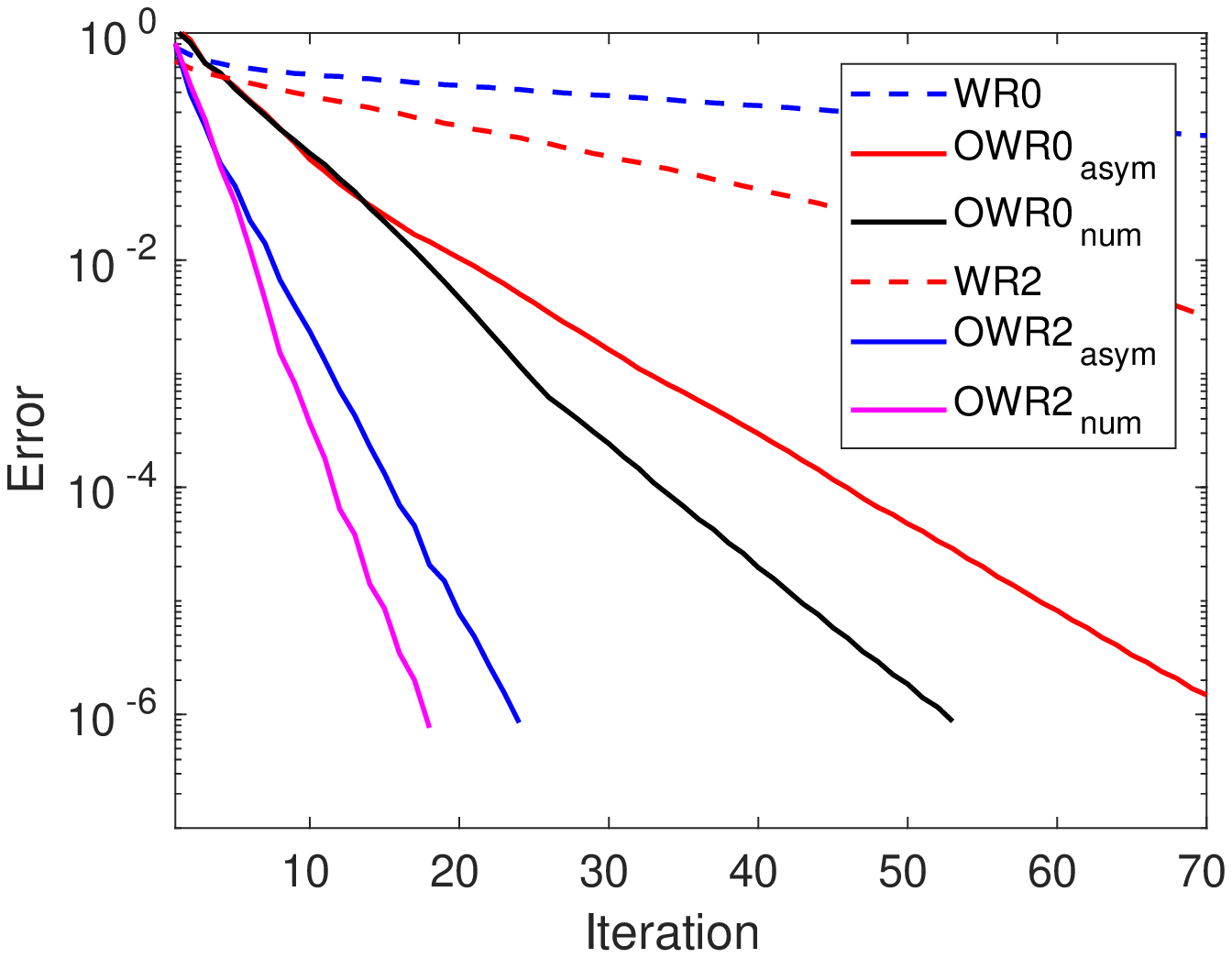}
\caption{Convergence for $T=2000$ for the RC circuit (left) and for
  the heat equation (right).}
\label{fig_6_wr_vs_owr}
\end{figure}
  clearly shows the important influence of our optimized
  transmission conditions \eqref{otc} on the convergence of the
  algorithm. We used the optimized $\alpha$ for OWR given 
  in Theorem \ref{opt_alpha_n_0} and Theorem \ref{opt_alpha_n}. 
  We also illustrate in the same figure the influence of overlap on
  convergence. Increasing the overlap increases the convergence 
  rate in both the WR and OWR algorithm, but the impact of the
  optimized transmission conditions is much more important
  for performance than the influence of the overlap.

  We have seen in \eqref{HeatEquationDiscrete} that for
  $\epsilon\rightarrow0$, the RC circuit of infinite length is an
  approximation to the one dimensional heat equation. We now
  apply the WR and OWR algorithms with different overlaps
  $n=0$ and $n=2$ to the heat equation \eqref{HeatEquationDiscrete}.
  For OWR we need the values of $\alpha^{*}_{0}$ and $\alpha^{*}_{2}$,
  which we obtain both by solving the min-max problem numerically,
  and by using the asymptotic expression 
  in Theorem \ref{opt_alpha_n_0} and Theorem \ref{opt_alpha_n}  
  respectively from our circuit analysis for small
  $\epsilon=10^{-4}$. From the right plot of Fig.
  \ref{fig_6_wr_vs_owr}, we observe that OWR converges much
  faster than WR. Also, the numerically calculated $\alpha^{*}_{0}$
  and $\alpha^{*}_{2}$ give us the best convergence. For overlap
  $n=2$, the convergence of OWR using our asymptotic expression 
  for $\alpha^{*}_{2}$ for $\epsilon=10^{-4}$ needs only 4 more
  iterations, than the best possible choice computed numerically.
  This is very little compared to the iterations and time
  required to compute $\alpha^{*}_{2}$ numerically. 
  This shows that our expressions 
in Theorem \ref{opt_alpha_n_0} and Theorem \ref{opt_alpha_n}
  can also be used when OWR is applied to the heat equation.

We next compare the numerically and asymptotically optimized
  values of $\alpha^{*}_{n}$. For the non-overlapping case $n=0$
 and $\epsilon=10^{-4}$, we show in Fig. \ref{fig_4_optimal_0}
\begin{figure}
\includegraphics[width=0.49\textwidth,clip]{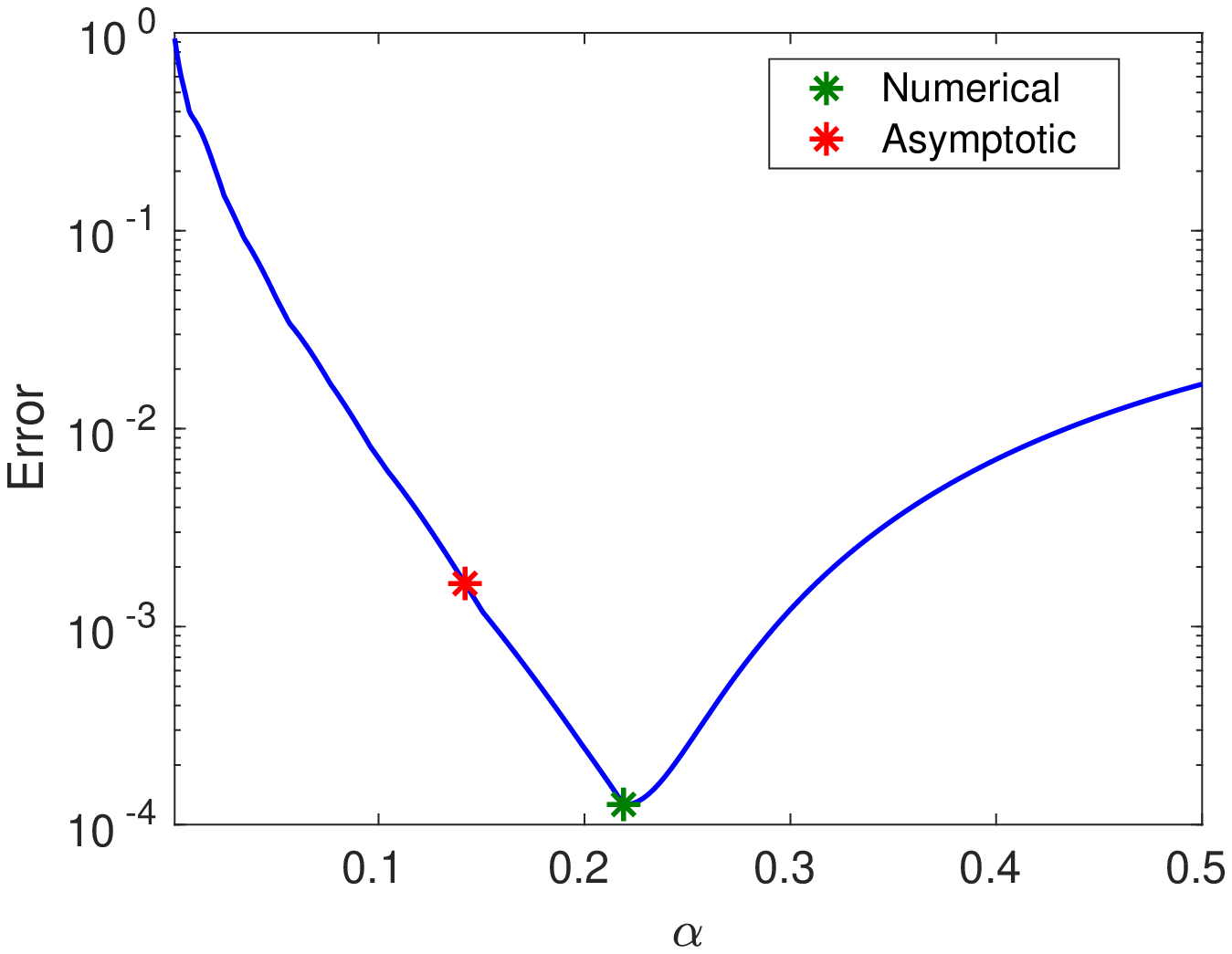}
\includegraphics[width=0.49\textwidth,clip]{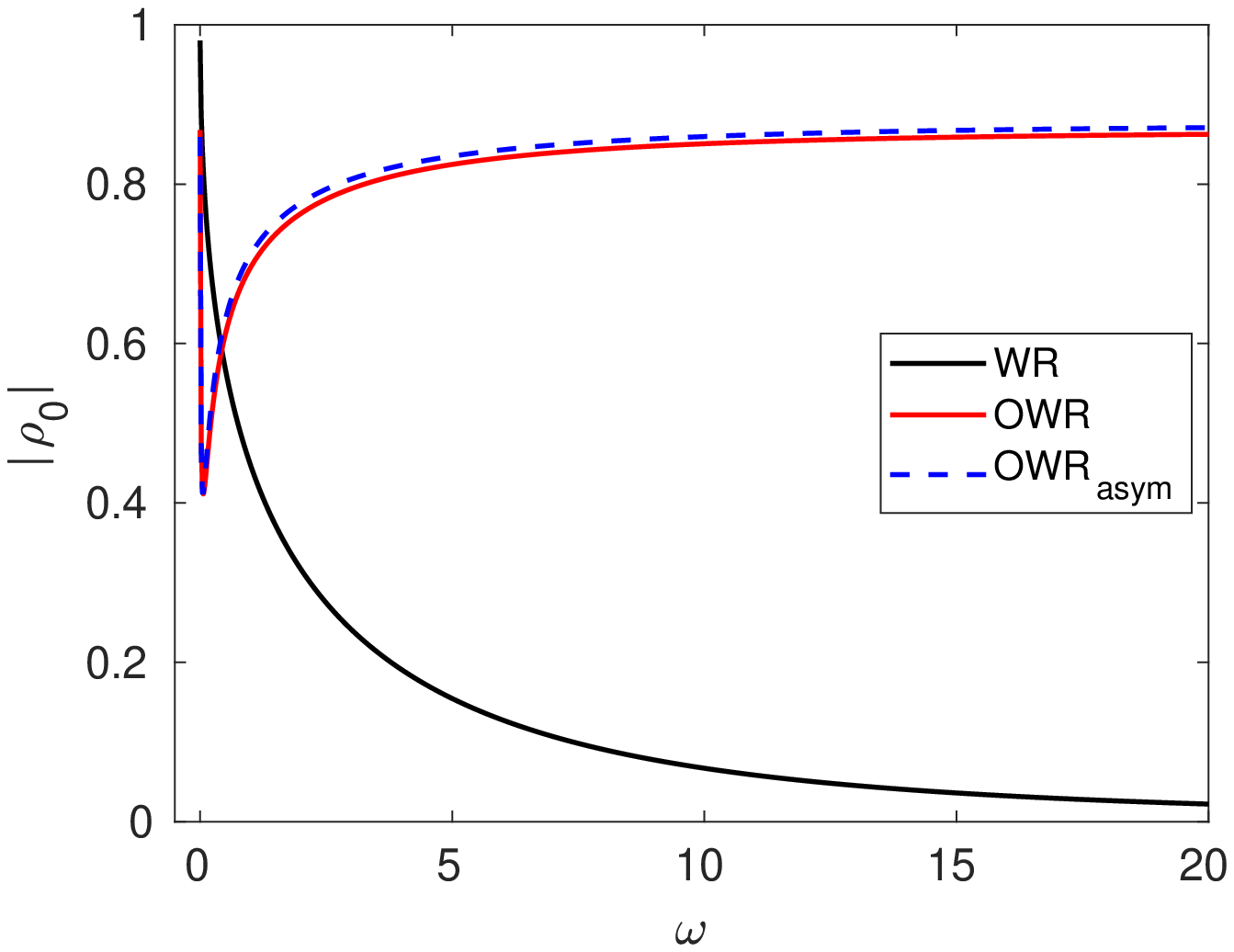}
\caption{Comparison of the optimized $\alpha$ for $\epsilon=10^{-4}$ for $n=0$ (left) and convergence factor for $n=0$ in Laplace space for WR and OWR (right).}
\label{fig_4_optimal_0}
\end{figure}
  on the left the error reduction for different values of $\alpha$,
  together with the numerically and asymptotically optimized value. We
  see that our asymptotic formula underestimates the optimal choice a
  bit. However, the right plot in Fig. \ref{fig_4_optimal_0}
  shows that for the two different values of the optimized
  $\alpha$, the corresponding convergence factors $|\rho_{0}|$ in
   Laplace space are very close to each other, and we also show the
  improvement in the overall convergence for OWR compared to WR. 
  In Fig. \ref{fig_4_optimal_n}
\begin{figure}
\includegraphics[width=0.49\textwidth,clip]{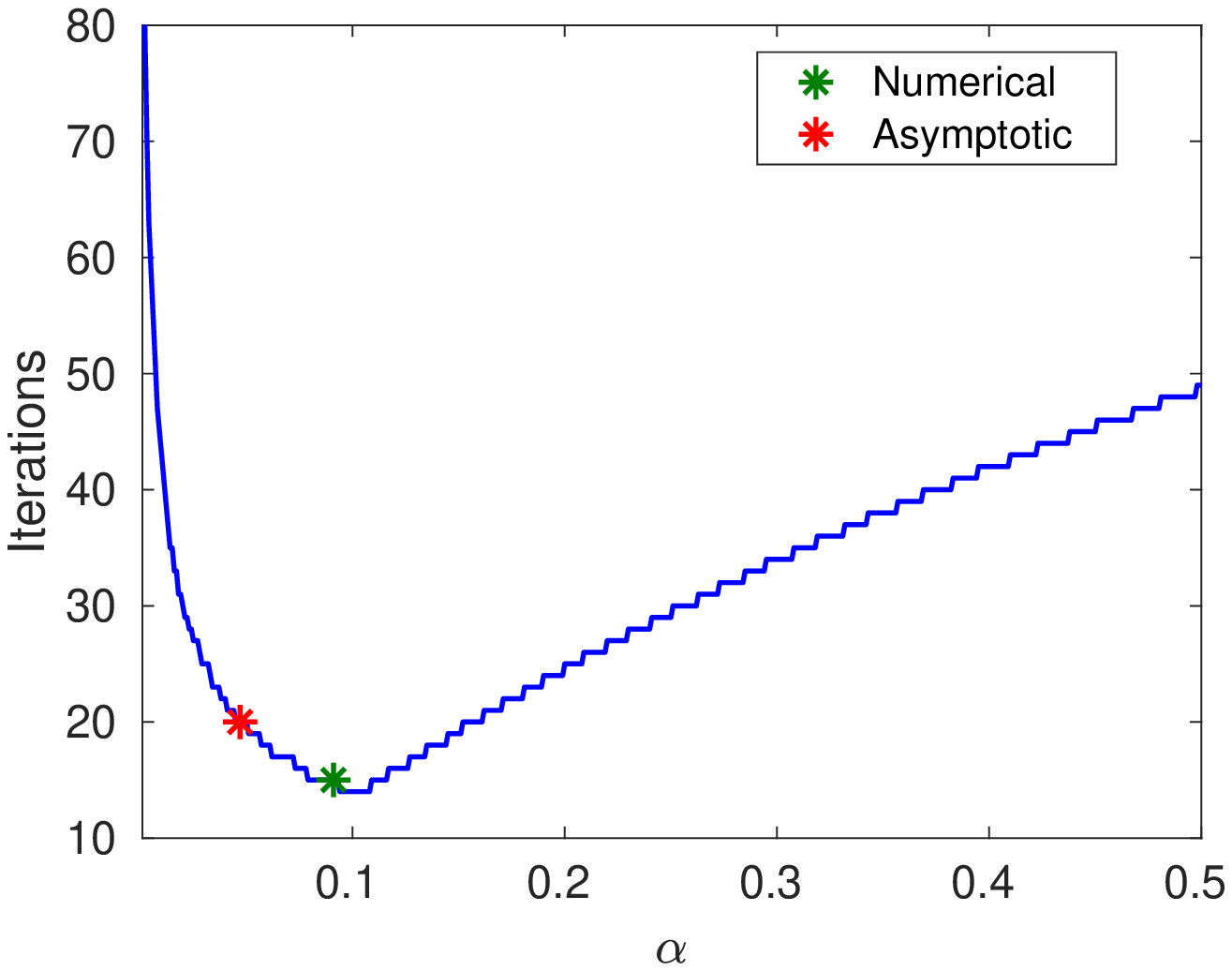}
\includegraphics[width=0.49\textwidth,clip]{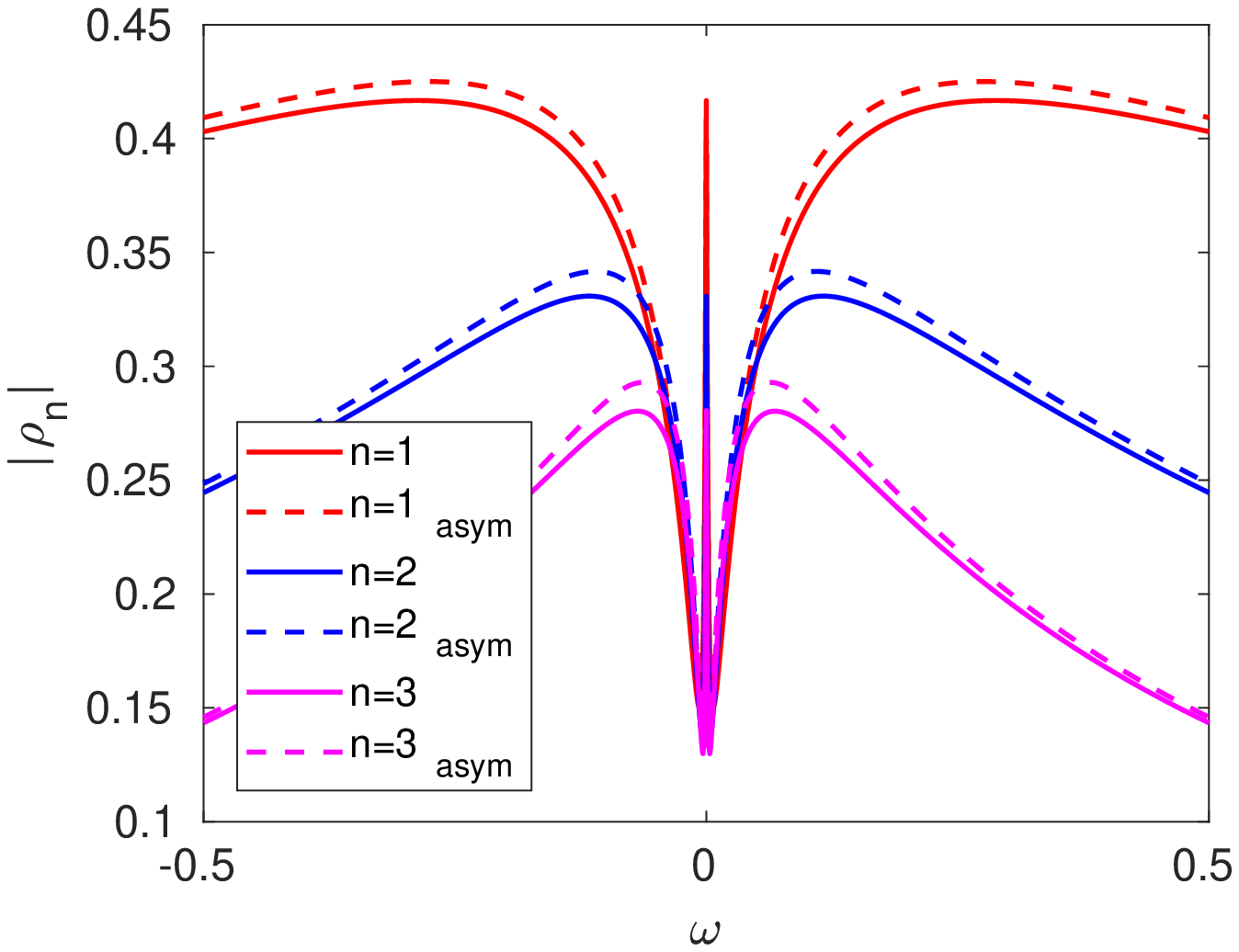}
\caption{Comparison of the optimized $\alpha$ for $\epsilon=10^{-4}$
  for $n=1$ (left) and comparison of the convergence factor for
  different overlaps for OWR (right).}
\label{fig_4_optimal_n}
\end{figure}
  we show the corresponding results for the overlapping case
  $n>0$, and the observations are similar as for $n=0$.

Finally, we consider the case when we split our RC circuit with size
  $N=1000$ into multiple sub-circuits. We choose $\epsilon=10^{-5}$
  and $T=1000$, with zero initial condition, homogeneous source and
  random initial guesses. We use our optimized $\alpha$ in
in Theorem \ref{opt_alpha_n} from the two sub-circuit analysis. 
 We first consider $N_s=5$ and then $N_s=50$ sub-circuits. We see in
  Fig. \ref{fig_7_multiple}
\begin{figure}
\includegraphics[width=0.49\textwidth,clip]{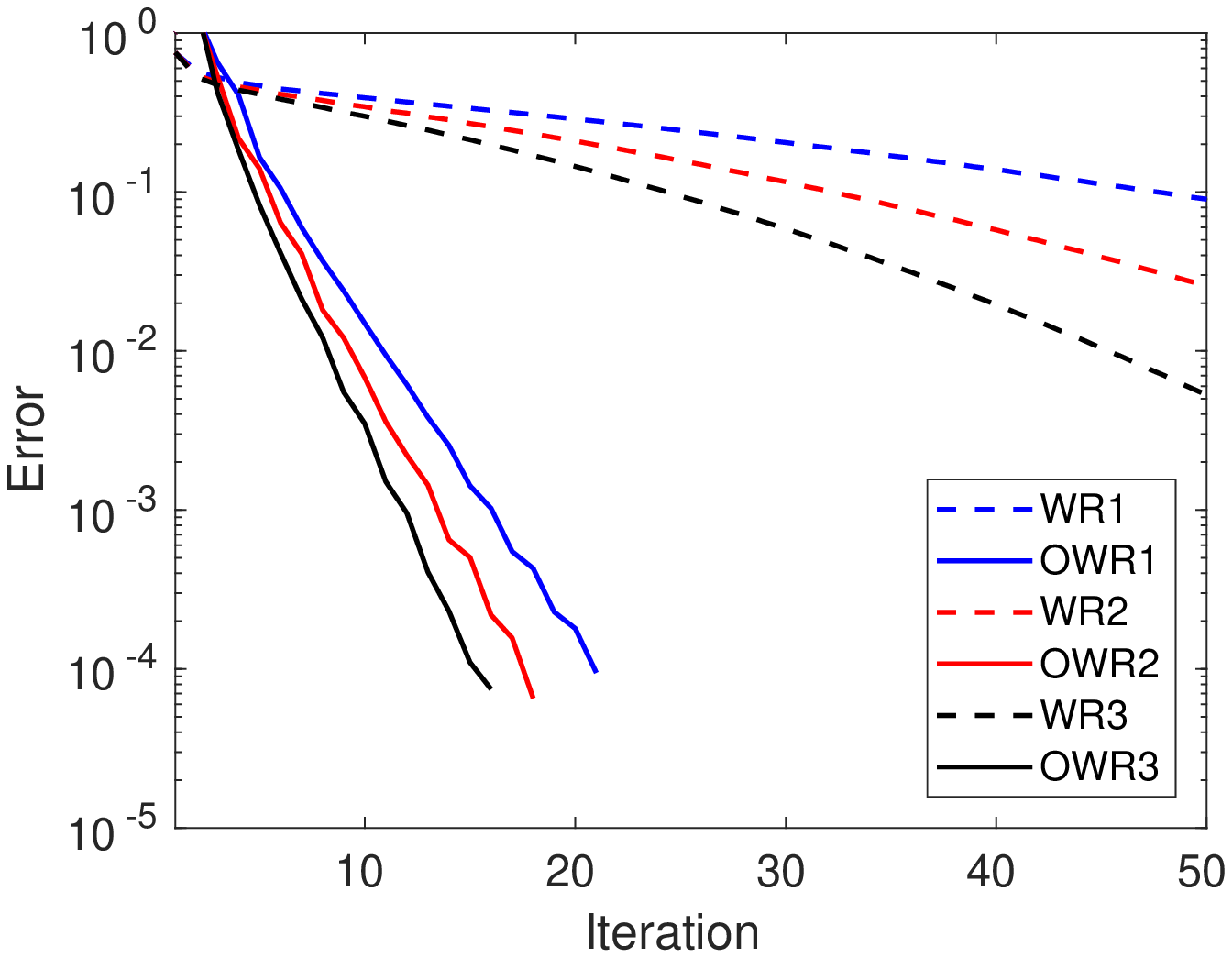}
\includegraphics[width=0.49\textwidth,clip]{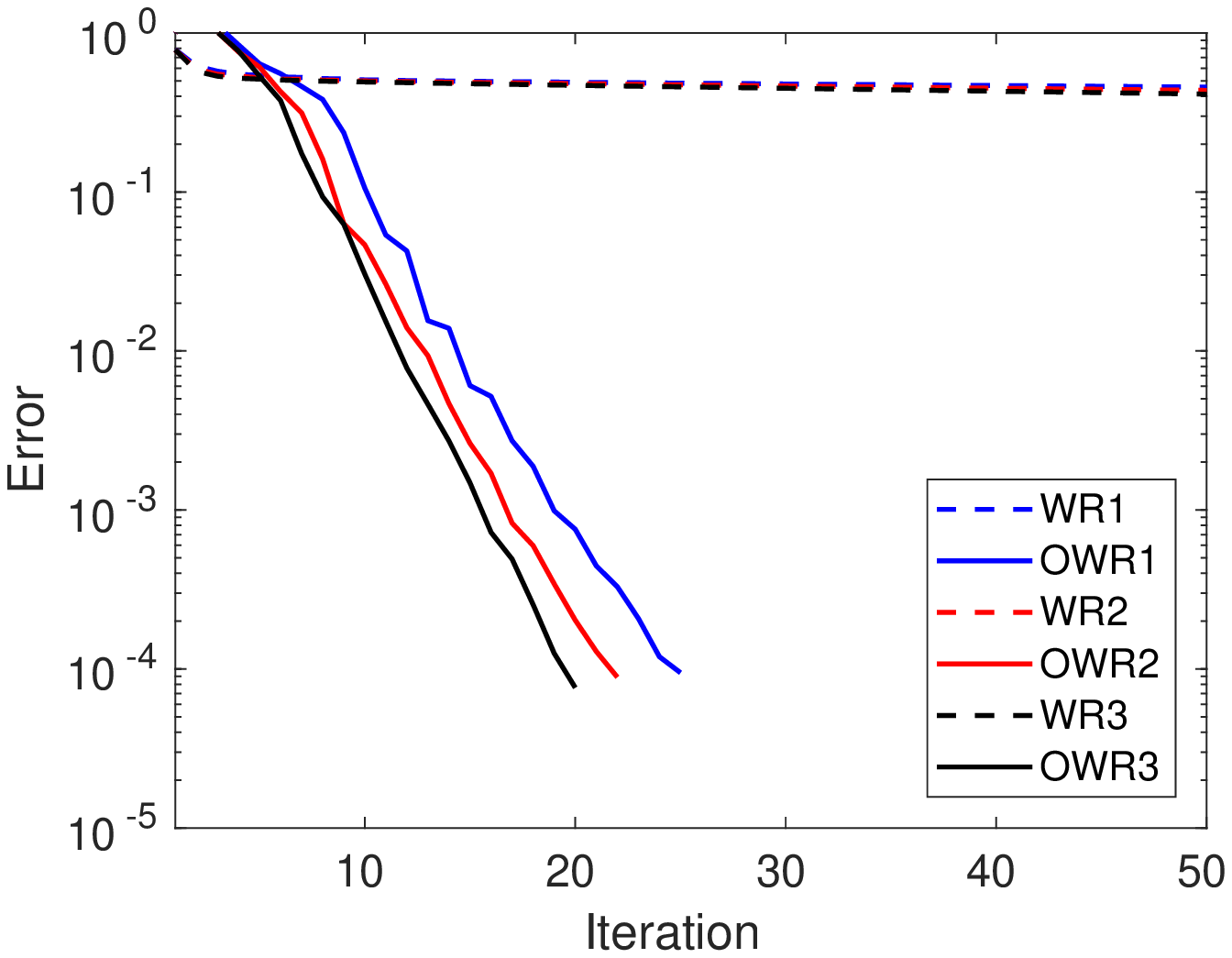}
\caption{Convergence for $T=1000$ for multiple sub-circuits $N_s=5$
  (left) and for $N_s=50$ (right).}
\label{fig_7_multiple}
\end{figure}
  that our asymptotically optimized parameters for two sub-circuits
  work extremely well for the many sub-circuit case, and make the OWR
  solver into a rather effective method for solving many sub-circuit
  problems.
  
  \section{Conclusion}\label{Sec:Conclusion}

  We presented a first analysis for the influence of overlap
  on the convergence factor for both the WR and OWR methods applied
  to RC circuits. We found that increasing the overlap increases
  the convergence rate for both WR and OWR, but the impact of
  optimized transmission conditions is far more important than the
  impact of overlap. The overlap however changes the optimized
  parameters, and we provided closed form asymptotic formulas for
  them. Using OWR with these parameters leads to much lower iteration
  counts, also when OWR is used for many sub-circuits. We also showed
  that our optimized parameters can be used when solving heat
  equations with waveform relaxation techniques. A final important
  contribution is our interpretation of these optimized transmission
  conditions as circuit elements, which should help circuit designers
  to better understand and embrace this new technology.


%
%

\bibliographystyle{spbasic}      
\bibliography{ref}



\end{document}